\newlength{\halfbls}\setlength{\halfbls}{.5\baselineskip}
\newtheorem{theorem}{Theorem}[section]
\newtheorem{prop}[theorem]{Proposition} 
\newtheorem{cor}[theorem]{Corollary}
\newtheorem*{exs}{Examples}
\newtheorem{lemma}[theorem]{Lemma}
\theoremstyle{definition}
\newtheorem{rem}[theorem]{Remark} 
\theoremstyle{remark}
\renewcommand{\div}{{\rm div}}
\newcommand{\ZZ}{\mathbb{Z}}
\newcommand{\CC}{\mathbb{C}}
\newcommand{\PP}{\mathbb{P}}
\newcommand{\LL}{\mathbb{L}}
\newcommand{\MM}{\mathbb{M}}
\newcommand{\WW}{\mathbb{W}}
\newcommand{\QQ}{\mathbb{Q}}
\newcommand{\RR}{\mathbb{R}}
\newcommand{\proj}{{\mathbb P}}
\newcommand{\Gal}{{\rm Gal}}
\newcommand{\Lin}{{\rm Lin}}
\newcommand{\SL}{{\rm SL}}
\newcommand{\Id}{{\rm Id}}
\newcommand{\moduli}[1][g]{{\mathcal M}_{#1}}
\newcommand{\omoduli}[1][g]{{\Omega \mathcal M}_{#1}}
\newcommand{\barmoduli}[1][g]{{\overline{\mathcal M}}_{#1}}
\newcommand{\cX}{\mathcal{X}}
\newcommand{\bcX}{\overline{\mathcal{X}}}
\newcommand{\cL}{\mathcal{L}}
\newcommand{\cO}{\mathcal{O}}
\newcommand{\cQ}{\mathcal{Q}}
\newcommand{\ulw}{\underline{w}}
\newcommand{\OO}{\mathcal O}
\newcommand{\pomoduli}[1][g]{{\proj\Omega\mathcal M}_{#1}}
\newcommand{\obarmoduli}[1][g]{{\Omega\overline{\mathcal M}}_{#1}}
\newcommand{\pobarm}[1][g]{\proj\Omega\overline{\mathcal M}_{#1}}
\newcommand{\Mg}{\overline{\mathcal M}_{g}}
\newcommand{\Sgo}{\overline{\mathcal S}^{-}_g}
\newcommand{\Sge}{\overline{\mathcal S}^{+}_g}
\newcommand{\Sg}{\overline{\mathcal S}_g}
\newcommand{\Zg}{\overline{Z}_g}
\newcommand{\bC}{{\overline{C}}}
\newcommand{\bB}{{\overline{B}}}
\newcommand{\Nfold}{{\rm Nfold}}
\newcommand{\other}{{\rm other}}
\newcommand{\hyp}{{\rm hyp}}
\newcommand{\nh}{{\rm non-hyp}}
\newcommand{\odd}{{\rm odd}}
\newcommand{\even}{{\rm even}}
\newcommand{\tr}{{\rm tr}}
\newcommand{\fun}{{\rm fun}}
\renewcommand{\Im}{\IM}
\newcommand{\teichm}{{Teich\-m\"uller\ }}
\newcommand{\irr}{{\rm irr}}
\newcommand{\rel}{{\rm rel}}
\DeclareMathOperator{\IM}{Im}
\DeclareMathOperator{\Pic}{Pic}
\title[Non-varying sums of Lyapunov exponents]{Non-varying sums of Lyapunov exponents of 
abelian differentials in low genus}
\author{Dawei Chen}
\address{Department of Mathematics, Boston College, Chestnut Hill, MA 02467, USA}
\email{dawei.chen@bc.edu}
\author{Martin M\"{o}ller}
\address{Institut f\"{u}r Mathematik, Goethe-Universit\"{a}t Frankfurt, Robert-Mayer-Str. 6-8, 60325 Frankfurt am Main, Germany}
\email{moeller@math.uni-frankfurt.de}
\thanks{During the preparation of this work the first author is partially supported by the 
NSF grant DMS-1101153 (transferred as DMS-1200329). The second author is partially supported
by the ERC-StG 257137. }
\begin{document}
\bibliographystyle{halpha}
 
\begin{abstract} We show that for many strata of Abelian differentials in low genus the sum of
Lyapunov exponents for the Teichm\"uller geodesic flow 
is the same for all \teichm curves in that
stratum, hence equal to the sum of Lyapunov exponents for
the whole stratum. This behavior is due to the disjointness 
property of \teichm curves with various geometrically defined 
divisors on moduli spaces of curves. 
\end{abstract}

\maketitle

\setcounter{tocdepth}{1}
\tableofcontents

\noindent

\section{Introduction}

Lyapunov exponents of dynamical systems are often hard to calculate
explicitly. For the \teichm geodesic flow on the moduli space of
Abelian differentials at least the {\em sum} of the positive Lyapunov
exponents is accessible for two cases. The moduli space decomposes
into various strata, each of which carries a finite invariant measure
with full support. For these measures the sum of Lyapunov exponents
can be calculated using \cite{emz} together with \cite{ekz}. On the other hand, the strata
contain many \teichm curves, e.g.\ those generated by square-tiled
surfaces. For \teichm curves an algorithm in \cite{ekz} calculates the sum
of Lyapunov exponents, of course only one \teichm curve at a time.
\par
On several occasions, one likes to have estimates, or even the 
precise values of Lyapunov exponents for all \teichm curves
in the same stratum simultaneously. For example, it is shown in \cite{delhub}
that Lyapunov exponents are responsible for the rate of diffusion
in the wind-tree model, where the parameters of the obstacle correspond
to picking a flat surface in a fixed stratum. One would like to 
know this escape rate not only for a specific choice of parameters
nor for the generic value of parameters but for {\em all} parameters.
\par
Zorich communicated to the authors, that, based on a limited number of 
computer experiments about a decade ago, Kontsevich and Zorich observed that 
the sum of Lyapunov exponents is non-varying among all the \teichm
curves in a stratum roughly if the genus plus the number of zeros is less than
seven, while the sum varies if this sum is greater than seven. 
\par
In this paper we show that a more precise version of this numerical observation 
indeed is true. More precisely, we treat the moduli space of genera less than or
equal to five. For each of its strata -- with three spin-related exceptions --
we either exhibit an example showing that the sum is varying -- the easy part 
-- or prove that the sum is non-varying.
The latter will be achieved by showing empty intersection of \teichm
curves with various geometrically defined divisors on moduli spaces of curves.
We remark that each stratum requires its own choice of divisor
and its individual proof of disjointness, with varying complexity
of the argument. In complement to our low genus results we mention
a theorem of \cite{ekz} that shows that for all hyperelliptic loci
the sum of Lyapunov exponents is non-varying.
\par
We now give the precise statement of what emerged out of the observation by Kontsevich and Zorich.
Let $(m_1,\ldots,m_k)$ be a partition of $2g-2$. 
Denote by $\omoduli(m_1,\ldots,m_k)$ the stratum parameterizing genus $g$ Riemann surfaces 
with Abelian differentials that have $k$ distinct zeros of order $m_1,\ldots,m_k$. We say that the
sum of Lyapunov exponents 
is {\em non-varying} in (a connected
component of) a stratum $\omoduli(m_1,\ldots,m_k)$, if for all \teichm curves 
generated by a flat surface in $\omoduli(m_1,\ldots,m_k)$ its sum
of Lyapunov exponents equals the sum for the finite invariant measure supported on
(the area one hypersurface of) the whole stratum. 
\par
\begin{theorem} \label{thm:g3main}
For all strata in genus $g=3$ 
but the principal stratum the sum of Lyapunov exponents is
non-varying.
\par
For the principal stratum, the sum of Lyapunov exponents is bounded above
by $2$. This bound can be attained for \teichm curves in the
hyperelliptic locus, e.g.,\ for \teichm curves that are unramified double
covers of genus two curves and also for \teichm curves that do not
lie in the hyperelliptic locus.
\end{theorem}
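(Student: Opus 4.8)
The plan is to derive from the Eskin--Kontsevich--Zorich formula an expression for the sum $L$ of Lyapunov exponents of a \teichm curve as a stratum constant plus a correction that is a ratio of intersection numbers on the compactified curve, and then to reduce the non-varying assertion to the empty intersection of that curve with explicit geometric divisors in $\overline{\mathcal{M}}_3$. Concretely, for a \teichm curve $\overline{C}$ generated by a flat surface in $\omoduli[3](m_1,\ldots,m_n)$, the formula of \cite{ekz} reads
$$L=\frac{1}{12}\sum_{j=1}^{n}\frac{m_j(m_j+2)}{m_j+1}+\frac{\pi^2}{3}\,c_{\mathrm{area}}(C),$$
in which the first summand depends only on the stratum and $c_{\mathrm{area}}(C)$ is the Siegel--Veech constant of $C$; thus non-varying is equivalent to $c_{\mathrm{area}}(C)$ being the same for every $C$. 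Comparing this with the Kontsevich--Forni identity $L=2(\lambda\cdot\overline{C})/(-\chi(\overline{C}))$, where $\lambda$ is the Hodge class, I would rewrite $c_{\mathrm{area}}(C)$ as an intersection number of $\overline{C}$ (regarded in $\overline{\mathcal{M}}_3$, or in $\overline{\mathcal{M}}_{3,n}$ using the sections marking the zeros of $\omega$) against an effective divisor $D$ assembled from $\lambda$, the boundary $\delta$ and the $\psi$-classes, up to a universal boundary contribution.

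For the non-varying part I would treat the four non-principal strata $(4)$, $(3,1)$, $(2,2)$ and $(2,1,1)$ --- and their hyperelliptic and odd spin components --- one at a time, choosing for each a geometrically defined divisor $D$: the hyperelliptic divisor $\overline{H}_3$ (of class $9\lambda-\delta_0-3\delta_1$), the theta-null divisor of curves with a vanishing even theta characteristic, or a Weierstrass-type divisor. The core step is to prove that the flat surfaces generating $C$ never specialize to the interior configuration cutting out $D$, so that $D\cdot\overline{C}$ reduces to a fixed quantity supported on the cusps; feeding this back into the formula pins $c_{\mathrm{area}}(C)$, hence $L$, to the value of the full stratum. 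I expect this disjointness verification to be the \emph{main obstacle}: there is no uniform argument, each stratum and each spin component needs its own divisor together with an analysis of how the orders of the zeros of $\omega$ obstruct the linear-series condition defining $D$, with the odd components and $(3,1)$ likely the most delicate.

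For the principal stratum $\omoduli[3](1,1,1,1)$ disjointness from the relevant divisor $D$ may fail, so I would replace the equality by an inequality: since a \teichm curve is never contained in $D$, one has $D\cdot\overline{C}\ge 0$, and as $D$ enters the formula with a negative coefficient this yields $L\le 2$, with equality precisely when $\overline{C}$ is disjoint from the interior of $D$. It then remains to exhibit curves attaining the bound. For a hyperelliptic witness I would take a genus two curve $(Y,\eta)$ with $\eta\in\omoduli[2](1,1)$ and, for suitable $2$-torsion data, an unramified double cover $\pi\colon X\to Y$; then $g(X)=3$ and $\omega=\pi^{*}\eta$ has four simple zeros, so $(X,\omega)$ lies in the principal stratum and, for the right covering datum, in the hyperelliptic locus. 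Splitting the Hodge bundle of $X$ into the pullback part, which carries the genus two dynamics with $\lambda_1+\lambda_2=3/2$ by the non-varying value of $\omoduli[2](1,1)$, and the one-dimensional Prym part, I would check that the Prym exponent equals $\tfrac12$, so that $L=2$. For a non-hyperelliptic witness I would produce an explicit square-tiled surface in the principal stratum lying off $\overline{H}_3$ and either compute $L=2$ directly by the algorithm of \cite{ekz} or verify $D\cdot\overline{C}=0$ for it; locating such an extremal non-hyperelliptic example, and confirming the vanishing of its intersection number, is the secondary difficulty.
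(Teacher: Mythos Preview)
Your overall strategy --- reduce non-varying to an empty intersection with a geometric divisor, then translate via the slope into a fixed value of $L$ --- is exactly the paper's. The upper bound $L\le 2$ for the principal stratum also matches: the paper uses $\overline{C}\cdot H\ge 0$ with the hyperelliptic divisor $H=9\lambda-\delta_0-3\delta_1$, just as you do, and exhibits an explicit non-hyperelliptic square-tiled surface with $L=2$.

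There is, however, a real gap in your divisor choices for two of the non-principal strata. The components $\omoduli[3](2,2)^{\odd}$ and $\omoduli[3](2,1,1)$ require slopes $44/5$ and $98/11$ respectively, both strictly below $9$. Since every effective divisor on $\barmoduli[3]$ has slope at least $9$ (attained by $H$), \emph{no} divisor on $\barmoduli[3]$ --- in particular not $H$, nor the theta-null divisor, nor a Weierstrass divisor --- can establish these cases by disjointness. The paper resolves $(2,2)^{\odd}$ by passing to the moduli space of odd spin curves $\overline{\mathcal S}^{-}_3$ and using the divisor $\overline{Z}_3$ of theta characteristics with a non-reduced section, and resolves $(2,1,1)$ by lifting to $\barmoduli[{3,2}]$ via two of the zeros and using the pointed Brill-Noether divisor $BN^1_{3,(1,2)}$. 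Neither of these spaces or divisors appears in your list. By contrast, your difficulty estimate is inverted: $(4)^{\odd}$ and $(3,1)$ are the \emph{easy} cases (both follow from disjointness with $H$ on $\barmoduli[3]$, using Proposition~\ref{prop:Teichhypdisj} to rule out boundary intersections), while $(2,2)^{\odd}$ and $(2,1,1)$ are the delicate ones precisely because of this slope obstruction.

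Two smaller points. First, the mechanism is not that $D\cdot\overline{C}$ ``reduces to a fixed quantity supported on the cusps'': one shows $D\cdot\overline{C}=0$ outright (interior \emph{and} boundary), which together with $\overline{C}\cdot\delta_i=0$ for $i>0$ yields $s(C)=s(D)$ directly. Second, for the hyperelliptic witness in the principal stratum the paper does not use a Prym decomposition; it invokes the Eskin--Kontsevich--Zorich formula for hyperelliptic loci (Theorem~\ref{thm:calchyp}) applied to the image of $\cQ(2,2,-1^8)$, which gives $L=2$ immediately. Your splitting argument could work, but the claim that the single Prym exponent equals $1/2$ is an extra statement you would need to justify.
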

\par
\begin{theorem} \label{thm:g4main}
For the strata with signature $(6)^\even$, $(6)^\odd$, $(5,1)$,
$(3,3)$, $(3,2,1)$ and $(2,2,2)^\odd$ as well as for the hyperelliptic strata 
in genus $g=4$ the sum of Lyapunov exponents is
non-varying. 
\par
For all the remaining strata, except maybe $(4,2)^\odd$ and $(4,2)^\even$, the sum of Lyapunov exponents is varying and bounded
above by $5/2$.
\end{theorem}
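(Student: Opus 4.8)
The plan is to convert the transcendental quantity $L(C)$ into an intersection number on the curve $\overline{C}\subset\overline{\mathcal{M}}_4$, the closure of the \teichm curve $C$, and then to read off both the non-varying and the varying conclusions from the geometry of $\overline{C}$. Concretely, combining the formula of \cite{ekz} for the sum of Lyapunov exponents of a \teichm curve with the expression of the relevant Hodge and relative-canonical degrees as intersection numbers with tautological and boundary classes, I would first establish an identity of the shape
\[
L(C)\;=\;c_\mu\;+\;a_\mu\,\bigl(D_\mu\cdot\overline{C}\bigr),
\]
where $c_\mu$ is the stratum value predicted by \cite{emz,ekz}, the constant $a_\mu\neq 0$ is explicit and rational, and $D_\mu$ is an \emph{effective} divisor class on $\overline{\mathcal{M}}_4$ adapted to the signature $\mu=(m_1,\ldots,m_k)$, with $\overline{C}\not\subseteq\mathrm{Supp}\,D_\mu$. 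Since then $D_\mu\cdot\overline{C}\geq 0$, the sum $L(C)$ equals $c_\mu$ \emph{if and only if} $\overline{C}$ is disjoint from $D_\mu$, and its deviation from $c_\mu$ is governed by the single intersection number $D_\mu\cdot\overline{C}$. Non-varying for a stratum thus reduces to proving that \emph{every} \teichm curve in it avoids $D_\mu$, whereas the upper bound $L\leq 5/2$ reduces to bounding $a_\mu\,(D_\mu\cdot\overline{C})$.

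The choice of $D_\mu$ is dictated by the canonical geometry of surfaces in the stratum. A flat surface in $\omoduli(m_1,\ldots,m_k)$ carries the canonical divisor $\sum_i m_i p_i$ with the prescribed multiplicity pattern, and in genus four the canonical model lies on a quadric and a cubic in $\proj^3$. When all $m_i$ are even, $\tfrac12\sum_i m_i p_i$ is an effective theta characteristic, so the surfaces acquire a spin structure; this is exactly the source of the $\even$/$\odd$ components of $(6)$, $(4,2)$ and $(2,2,2)$, and it links these strata to the theta-null divisor (curves with a vanishing even theta characteristic, equivalently curves on a quadric cone). For such strata I would take $D_\mu$ to be the theta-null divisor, or a suitable residual/Brill--Noether modification of it so that $\overline{C}\not\subseteq D_\mu$ and the constant $c_\mu$ comes out right; for the signatures with odd entries, such as $(5,1)$, $(3,3)$ and $(3,2,1)$, I would instead use Brill--Noether or Gieseker--Petri type divisors detecting the degeneration of the two $g^1_3$'s carried by a genus four curve.

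The heart of the argument is then the disjointness $\overline{C}\cap D_\mu=\emptyset$ for each non-varying signature. Over the interior this follows from the incompatibility of the prescribed zero pattern $\sum_i m_i p_i$ with the special linear series defining $D_\mu$; the real work is at the cusps of $C$, whose boundary points in $\overline{C}$ are the stable (admissible-cover) limits of the flat surfaces. There I would pass to the associated limit linear series, resp.\ twisted differentials, and verify signature by signature that the limiting canonical configuration cannot specialize into $\mathrm{Supp}\,D_\mu$, so that $\overline{C}$ meets neither the interior nor the boundary part of $D_\mu$. For the hyperelliptic strata I would instead invoke the theorem of \cite{ekz} quoted in the introduction. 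For the varying strata the task reverses and is easier: I would exhibit explicit square-tiled surfaces, compute $L$ for the \teichm curves they generate via the \cite{ekz} algorithm, and display at least two distinct values (equivalently one curve with $D_\mu\cdot\overline{C}=0$ and one with $D_\mu\cdot\overline{C}>0$); the uniform bound $L\leq 5/2$ I would derive from the displayed identity together with positivity/effectivity bounds among $\lambda$, the boundary classes $\delta_i$ and the geometric divisor on $\overline{\mathcal{M}}_4$.

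The main obstacle is precisely the disjointness step in the non-varying cases: it demands both the correct choice of $D_\mu$ (so that the identity holds with the intended constant $c_\mu$) and a \emph{complete} control of the degenerations of each \teichm curve, since an intersection can be excluded only once every cusp has been analysed. The spin refinements separating $\even$ from $\odd$ components make this delicate, and the two undecided signatures $(4,2)^\odd$ and $(4,2)^\even$ are exactly the borderline spin strata where the limit linear series analysis does not obviously exclude an intersection and where no varying square-tiled example is readily produced either.
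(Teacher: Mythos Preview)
Your high-level strategy matches the paper's: reduce non-varying to disjointness of $\overline{C}$ from a well-chosen effective divisor, exhibit square-tiled surfaces for the varying strata, and extract upper bounds from non-negative intersection. However, your proposal has a genuine gap in the choice of ambient space for $D_\mu$.

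You insist that $D_\mu$ lives on $\overline{\mathcal M}_4$. This cannot succeed for most of the strata in the theorem. The slopes forced by the claimed values of $L$ are
\[
s_{(6)^{\odd}}=\tfrac{108}{13},\quad s_{(5,1)}=\tfrac{25}{3},\quad s_{(3,3)^{\nh}}=\tfrac{33}{4},\quad s_{(3,2,1)}=\tfrac{41}{5},\quad s_{(2,2,2)^{\odd}}=8,
\]
all of which are strictly below $17/2$, the minimal slope of an effective divisor on $\overline{\mathcal M}_4$ (attained by the Gieseker--Petri divisor). Hence no effective $D_\mu$ on $\overline{\mathcal M}_4$ can be disjoint from every Teichm\"uller curve in these strata; your displayed identity with $c_\mu$ equal to the stratum value is impossible there. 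The paper circumvents this by \emph{leaving} $\overline{\mathcal M}_4$: it lifts $C$ to $\overline{\mathcal M}_{4,n}$ by marking the zeros of $\omega$ and uses pointed Brill--Noether divisors whose classes involve the $\omega_{i,\rel}$ (so the relation mixes $\lambda$, $\delta_0$, and the self-intersections $S_i^2$ computed in Proposition~\ref{prop:intomega}), and for $(2,2,2)^{\odd}$ it passes to the spin moduli space $\overline{\mathcal S}_4^{-}$ and uses the Farkas--Verra divisor $\overline{Z}_4$. Concretely: $\Theta$ on $\overline{\mathcal M}_{4,1}$ for $(6)^{\even}$; $BN^1_{3,(2)}$ on $\overline{\mathcal M}_{4,1}$ for $(6)^{\odd}$ and $(5,1)$; $\Lin^1_3=BN^1_{3,(1,1)}$ on $\overline{\mathcal M}_{4,2}$ for $(3,3)^{\nh}$; $BN^1_{4,(1,1,2)}$ on $\overline{\mathcal M}_{4,3}$ for $(3,2,1)$. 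Your suggestion to use the theta-null divisor for all even-spin strata and Gieseker--Petri for the others does not match these choices and would not produce the correct constants.

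A second, smaller point: the boundary analysis in the paper is not done via limit linear series or twisted differentials. It relies on two structural facts about cusps of Teichm\"uller curves (Corollary~\ref{cor:zerosatinf} and Proposition~\ref{prop:Teichhypdisj}): degenerate fibers have no separating nodes, the zero divisor of $\omega$ persists with the same signature on the dualizing sheaf, and for the relevant $\mu$ a hyperelliptic degenerate fiber forces the whole curve into the hyperelliptic locus. With these in hand, the disjointness is checked by elementary Riemann--Roch/Clifford arguments on the canonical model in $\mathbb P^3$ (rulings of the quadric), case by case. Finally, the bound $L\le 5/2$ is not obtained uniformly from a single effectivity inequality; it is assembled stratum by stratum (often with sharper bounds such as $7/3$, $13/6$, $21/10$, $16/7$) by intersecting with the appropriate pointed Brill--Noether divisor after showing $C$ is not contained in it, together with the hyperelliptic value $5/2$ from \cite{ekz}.
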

\par
We give more precise upper bounds for the sum stratum by stratum in the
text. We remark that e.g.\ for $\omoduli[4](4,1,1)$ the sharp upper bound
is $23/10$, which is attained for hyperelliptic curves, whereas for all
non-hyperelliptic curves in this stratum the sum of Lyapunov exponents
is bounded above by $21/10$. This special role of the hyperelliptic
locus is visible throughout the paper.
\par
For $g=5$, since there are quite a lot of strata, we will not give a full discussion of 
upper bounds for varying sums, but restrict to the cases where the sum is non-varying.
\par
\begin{theorem} \label{thm:g5main}
For the strata with signature $(8)^\even$, $(8)^\odd$ and $(5,3)$ 
as well as for the hyperelliptic strata in genus $g=5$ the sum of Lyapunov exponents is non-varying. 
\par
For all the other strata, except maybe $(6,2)^\odd$, the sum of Lyapunov exponents is varying. 
\end{theorem}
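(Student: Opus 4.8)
The plan is to separate Theorem~\ref{thm:g5main} into its non-varying and varying halves and to run, stratum by stratum, the machine described in the introduction. The input is the formula of \cite{ekz}: for a \teichm curve $C$ generated by a surface in $\omoduli[5](m_1,\dots,m_k)$, with compactified image $\overline C\hookrightarrow\overline{\mathcal M}_5$,
\begin{equation*}
\sum_{i=1}^5 \lambda_i \;=\; \frac{1}{12}\sum_{j=1}^k\frac{m_j(m_j+2)}{m_j+1} \;+\; c\,\frac{\overline C\cdot\delta}{-\chi(C)},\qquad c>0,
\end{equation*}
where $-\chi(C)>0$ is (the negative of) the orbifold Euler characteristic and $\overline C\cdot\delta$ records the degenerate surfaces at the cusps. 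Thus $\sum_i\lambda_i$ is a fixed affine function of the normalized boundary intersection, and the stratum value -- computed independently through \cite{emz} and \cite{ekz} -- is attained at exactly one value of $\overline C\cdot\delta/(-\chi(C))$. Proving non-varying therefore means forcing every \teichm curve to realize that value, and proving varying means producing two curves that realize different values.

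For the non-varying strata I would argue case by case. The two hyperelliptic strata $(8)^\hyp$ and $(4,4)^\hyp$ are settled at once by the theorem of \cite{ekz} recalled in the introduction. For $(8)^\even$, $(8)^\odd$ and $(5,3)$ the goal is to produce, separately for each, an effective divisor $D$ on $\overline{\mathcal M}_5$ -- or on the space $\overline{\mathcal M}_{5,k}$ marking the $k$ zeros -- whose class has the form $a\lambda-\sum_i b_i\delta_i$ with all $a,b_i>0$, and to show that every \teichm curve in the stratum is disjoint from $D$. Genus five offers a rich supply of such divisors: a general canonical genus-five curve is a complete intersection of three quadrics in $\proj^4$, the trigonal locus is a Brill--Noether divisor of class proportional to $8\lambda-\delta_0-4\delta_1-6\delta_2$, and additional divisors are cut out by degenerations of the net of quadrics and by Gieseker--Petri type conditions; which one to use is dictated by the stratum, as the introduction anticipates. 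Granting $\overline C\cdot D=0$, this relation together with the constraints on the admissible boundary points of a \teichm curve forces $\overline C\cdot\delta/(-\chi(C))$, and hence $\sum_i\lambda_i$, to the single value computed for the stratum. The substance of each case is therefore the disjointness itself: one checks both that the generic curve in the family fails the special linear-series condition defining $D$ -- here the rigid position of the zeros, $8P\sim K$ respectively $5P+3Q\sim K$, is the decisive geometric input -- and that the totally degenerate stable curves at the cusps of $C$ avoid $D$ as well.

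The varying half is the elementary direction. For each component of each remaining partition of $8$ -- everything other than $(8)^\even$, $(8)^\odd$, $(5,3)$, the two hyperelliptic strata, and the undecided $(6,2)^\odd$ -- I would exhibit two square-tiled surfaces generating \teichm curves in that component and apply the algorithm of \cite{ekz} to compute their sums of Lyapunov exponents, verifying that the two outputs disagree. The only care needed is to certify that each chosen origami lies in the intended connected component, in particular to land in the prescribed spin component and to supply one example in each spin component wherever the stratum splits.

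The main obstacle is the non-varying half, and inside it the disjointness arguments. The difficulty is twofold: first, selecting for each of $(8)^\even$, $(8)^\odd$ and $(5,3)$ a divisor whose class realizes exactly the numerical relation needed to pin down $\sum_i\lambda_i$; and second, controlling the curve near $\partial\overline{\mathcal M}_5$, since an a priori uncontrolled cusp could degenerate onto $D$ and spoil the vanishing $\overline C\cdot D=0$. This boundary analysis, resting on the special structure of the limits of \teichm curves, is where the individual proofs become delicate. Finally, the reason $(6,2)^\odd$ is left open is precisely that in this one odd-spin case neither a distinguishing pair of origamis nor a workable disjointness statement presents itself.
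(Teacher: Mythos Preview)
Your proposal is correct and follows essentially the same approach as the paper: the hyperelliptic strata are handled by the cited result of \cite{ekz}, the three non-hyperelliptic non-varying strata are treated by proving disjointness from specific Brill--Noether type divisors (the paper uses $BN^1_3$ on $\barmoduli[5]$ for $(8)^\even$, and pointed Brill--Noether divisors on $\barmoduli[{5,1}]$ and $\barmoduli[{5,2}]$ for $(8)^\odd$ and $(5,3)$ respectively), and the varying strata are handled by explicit square-tiled examples compared against the stratum value or a hyperelliptic-locus value. Your outline correctly isolates the disjointness at the boundary as the delicate step, though of course the substance lies in actually choosing the right divisor for each stratum and carrying out the geometric argument---which your proposal defers.
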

\par
We also expect the three unconfirmed cases $(4,2)^{\even}$, $(4,2)^{\odd}$ and $(6,2)^{\odd}$
to be non-varying\footnote{Recently Yu and Zuo \cite{yuzuo} confirmed the three cases using filtration of the Hodge bundle. See also \cite[Theorem A.9]{lyapquad} for a detailed explanation.}, but a proof most likely requires a good understanding of the moduli space of spin
curves, on which much less is known than on the moduli space of curves.
\par
The above theorems seem to be the end of this non-varying phenomenon. We cannot claim that
there is not a single further stratum of genus greater than five and not
hyperelliptic, where the sum is non-varying. But while the sum in strata with
a single zero is always non-varying for $g \leq 5$, the sum does {\em vary}
in both non-hyperelliptic components of the stratum $\omoduli[6](10)$,
as we show in Proposition~\ref{prop:6vary}. 
\par
As mentioned above, by \cite{ekz} for hyperelliptic strata in any genus the sum of Lyapunov exponents is non-varying. This has significance not only in dynamics, but also 
in the study of birational geometry of moduli spaces. In Theorem~\ref{thm:extremal} we mention one application to the extremality of certain divisor 
classes on the moduli space of pointed curves, which answers a question posed by Harris \cite[p. 413]{harrisfamily}, 
\cite[Problem (6.34)]{harrismorrison}. 
\par 
We now describe our strategy. One can associate three quantities 'slope', 'Siegel-Veech constant' 
and 'the sum of Lyapunov exponents' to a \teichm curve. Any one of the three determines the other two. 
Hence it suffices to verify the non-varying property for slopes. 
To do this, we exhibit a geometrically defined divisor on the moduli space 
of curves and show that \teichm curves in a stratum do not
intersect this divisor. It implies that those \teichm curves
have the same slope as that of the divisor. 
\par
The slope of the divisors, more generally their divisor classes in the Picard
group of the moduli space, can be retrieved from the literature in
most cases we need. In the remaining cases, we apply the standard
procedure using test curves to calculate the divisor class. 
\par
Frequently, we also need to consider the moduli 
space of curves with marked points or spin structures, but the basic idea
remains the same. For upper bounds of sums of Lyapunov exponents, they
follow from the non-negative intersection property of \teichm curves with
various divisors on moduli spaces. 
\par
Technically, some of the complications arise from the fact, that the disjointness of a \teichm curve with a divisor is relatively
easy to check in the interior of the moduli space, but
requires extra care when dealing with stable nodal curves in the boundary.
\par
In a sequel paper \cite{lyapquad} we consider \teichm curves generated by quadratic differentials and 
verify many non-varying strata of quadratic differentials in low genus. 
These results immediately trigger a number of questions. Just to mention the most obvious ones. What about measures supported on manifolds of intermediate dimension? What about the value
distribution for the sums in a stratum where the sum is varying? 
We hope to treat these questions in the future.
\par
This paper is organized as follows. In Sections~\ref{sec:backmoduli}, \ref{sec:Teichgen} and \ref{sec:backLyap} we give 
a background introduction to moduli spaces and their divisors, as well as to \teichm curves and Lyapunov exponents. In particular, in Section~\ref{sec:propTeich} we study the properties of \teichm curves that are needed in the proof and in Section~\ref{sec:svsl} we describe
the upshot of our strategy. Our main results for $g=3$, $g=4$ and $g=5$ are 
proved in Sections~\ref{sec:g=3}, \ref{sec:g=4} and \ref{sec:g=5}, respectively. Finally in Section~\ref{sec:hyp} we discuss 
an application of the \teichm curves in the hyperelliptic strata to the geometry of moduli spaces of pointed curves. 
\par 
{\bf Acknowledgements.} This work was initiated when the first author visited the 
Hausdorff Research Institute for Mathematics in Summer 2010. Both authors thank HIM for 
hospitality. The evaluation of Lyapunov exponents were performed with the help of
computer programs of Anton Zorich and Vincent Delecroix. The authors are grateful to
Anton and Vincent for sharing the programs and the data. The first author also 
would like to thank Izzet Coskun for stimulating discussions on the 
geometry of canonical curves and thank Alex Eskin for leading him into the beautiful subject
of Teichm\"uller curves. Finally the authors want to thank the anonymous referees for a number of 
suggestions which helped improve the exposition of the paper. 

\section{Background on moduli spaces} \label{sec:backmoduli}

\subsection{Strata of $\omoduli$ and hyperelliptic loci}
Let $\omoduli$ denote the vector bundle of holomorphic one-forms
over the moduli space $\moduli$ of genus $g$ curves minus the zero section
and let $\pomoduli$ denote the associated projective bundle. The spaces
$\omoduli$ and $\pomoduli$ are stratified
according to the zeros of one-forms. For $m_i\geq 1$ and $\sum_{i=1}^k m_i = 2g-2$, 
let $\omoduli(m_1,\ldots,m_k)$ denote the stratum parameterizing one-forms that
have $k$ distinct zeros of order $m_1,\ldots, m_k$. 
\par
Denote by $\barmoduli$ the Deligne-Mumford compactification
of $\moduli$. The boundary of $\barmoduli$ parameterizes \emph{stable nodal curves}, where the stability means 
the dualizing sheaf of the curve is ample, or equivalently, the normalization of any rational component needs to possess at least three special points coming from the inverse images of the nodes. The bundle of holomorphic one-forms extends
over $\barmoduli$, parameterizing {\em stable one-forms}
or equivalently sections of the dualizing sheaf.
We denote the total space of this extension by $\obarmoduli$.
\par
Points in $\omoduli$, called {\em flat surfaces}, are usually
written as $(X,\omega)$ for a one-form $\omega$ on $X$.
For a stable curve $X$, denote the dualizing sheaf 
by $\omega_X$. We will stick to the notation that points in
$\obarmoduli$ are given by  a pair $(X,\omega)$ with $\omega \in H^0(X,\omega_X)$.
\par
For $d_i \geq -1$ and $\sum_{i=1}^s d_i = 4g-4$, let $\cQ(d_1,\ldots,d_s)$ 
denote the {\em moduli space of quadratic differentials} that have $s$ distinct zeros or poles of order 
$d_1,\ldots, d_s$. The condition $d_i\geq -1$ ensures that the quadratic differentials in $\cQ(d_1,\ldots,d_s)$ have at most simple poles. Namely, $\cQ(d_1,\ldots,d_s)$ parameterizes pairs $(X,q)$
of a Riemann surface $X$ and a meromorphic section $q$ of $\omega_X^{\otimes 2}$
with the prescribed type of zeros and poles. Pairs  $(X,q)$ are called {\em half-translation
surfaces}. They appear occasionally to provide examples via the following construction.
\par
If the quadratic differential is not a global square of a one-form, 
there is a natural double covering $\pi: Y \to X$
such that $\pi^* q = \omega^2$. This covering is ramified precisely
at the zeros of odd order of $q$ and at its poles. It gives a
map
$$\phi: \cQ(d_1,\ldots,d_s) \to \omoduli(m_1,\ldots, m_k), $$
where the signature $(m_1,\ldots, m_k)$ is determined by the ramification
type. Indeed $\phi$ is an immersion (see \cite[Lemma 1]{kz03}).
\par
There are two cases where the domain and the range of the map $\phi$ have the same dimension: 
$$ \cQ(-1^{2g+1}, 2g-3) \to \omoduli(2g-2), $$
$$ \cQ(-1^{2g+2}, 2g-2) \to \omoduli(g-1,g-1), $$
see \cite[p. 637]{kz03}. 
In both cases we call the image a {\em component of hyperelliptic flat surfaces}
of the corresponding stratum of Abelian differentials. Note that for both cases the domain of $\phi$ 
parameterizes genus zero curves. More generally, if the domain of $\phi$ parameterizes
genus zero curves, we call the image a {\em locus of hyperelliptic flat surfaces}
in the corresponding stratum. These loci are often called hyperelliptic loci, 
e.g.\ in \cite{kz03} and \cite{ekz}. We prefer to reserve {\em hyperelliptic locus} for the
subset of $\moduli$ (or its closure in $\barmoduli$, see also Section~\ref{sec:specialdivisors}) 
parameterizing hyperelliptic curves 
and thus specify with 'flat surfaces' if we speak of subsets of $\omoduli$. 
\par


\subsection{Spin structures and connected components
of strata} \label{sec:spinmod}

A {\em spin structure} (or {\em theta characteristic}) 
on a smooth curve $X$ is a line bundle $\cL$
whose square is the canonical bundle, i.e. $\cL^{\otimes 2} \sim K_X$.
The {\em parity of a spin structure} is given by $h^0(\cL) \mod 2$. This
parity is well-known to be deformation invariant. There is
a notion of spin structure on a stable curve, extending the
smooth case (see \cite{Corn89}, also recalled in \cite[Section 1]{FV}). We only
need the following consequence.
The {\em moduli space of spin curves} $\Sg$ parameterizes pairs 
$(X,\eta)$, where $\eta$ is a theta characteristic of $X$.
It has two components $\Sgo$ and $\Sge$ distinguished by
the parity of the spin structure. The spin structures on
stable curves are defined such that the morphisms 
$\pi: \Sgo\rightarrow \Mg$ and $\pi: \Sge \rightarrow \Mg$ are 
finite of degree $2^{g-1}(2^g-1)$ and $2^{g-1}(2^g+1)$, respectively, cf. loc.\ cit.
\par
Recall the classification of connected components 
of strata in $\omoduli$ by Kontsevich and Zorich \cite[Theorem 1 on p. 639]{kz03}.
\par
\begin{theorem}[\cite{kz03}]
The strata of $\omoduli$ have up to three
connected components, distinguished by the parity of the spin structure
and by being hyperelliptic or not. For $g \geq 4$, the strata
$\omoduli[g](2g-2)$ and $\omoduli[g](2k,2k)$ with an integer $k = (g-1)/2$
have three components,
the component of  hyperelliptic flat surfaces and two components with odd or even
parity of the spin structure but not consisting exclusively
of hyperelliptic curves. 
\par
The stratum $\omoduli[3](4)$ has two components, $\omoduli[3](4)^\hyp$ and $\omoduli[3](4)^\odd.$
The stratum $\omoduli[3](2,2)$ also has two components, $\omoduli[3](2,2)^\hyp$ and $\omoduli[3](2,2)^\odd.$
\par
Each stratum $\omoduli[g](2k_1,\ldots,2k_r)$ for $r \geq 3$ or $r=2$ and $k_1 \neq (g-1)/2$
has two components determined by even and odd spin structures.
\par
Each stratum $\omoduli[g](2k-1,2k-1)$ for $k\geq 2$ has two components, the
 component of hyperelliptic flat surfaces $\omoduli[g](2k-1,2k-1)^\hyp$ and the other component $\omoduli[g](2k-1,2k-1)^\nh$.\par 
In all the other cases, the stratum is connected.
\end{theorem}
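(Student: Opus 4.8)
The plan is to exhibit two deformation-invariant discrete invariants of a flat surface --- its spin parity and whether it is hyperelliptic --- to show that each realizes the values claimed, and then to prove that these invariants completely determine the connected component.

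First I would make the invariants precise. For a stratum $\omoduli(m_1,\ldots,m_k)$ in which every $m_i = 2a_i$ is even, the divisor $\eta = \sum_i a_i z_i$ supported on the zeros $z_i$ of $\omega$ satisfies $2\eta = \div(\omega) \sim K_X$, so $\eta$ is a theta characteristic and its parity $h^0(\eta) \bmod 2$ is defined. I would verify that this parity is locally constant on the stratum: along a path the zeros vary continuously and $\eta$ varies in a family of theta characteristics, so by the deformation-invariance of spin parity recalled in Section~\ref{sec:spinmod} the parity cannot jump. Hyperellipticity, in the sense of the locus of hyperelliptic flat surfaces arising from the double-cover construction $\phi\colon\cQ(\cdot)\to\omoduli(\cdot)$, is likewise a closed and open condition on the relevant strata, hence a union of components.

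Next I would establish the lower bound, namely that the claimed components are nonempty and genuinely distinct. Here one builds explicit flat surfaces --- most conveniently as suspensions over interval-exchange transformations, or by direct polygon gluings --- realizing each prescribed parity, and one computes the spin parity from the combinatorics via the Arf-invariant formula. For the hyperelliptic loci one uses the genus-zero quadratic differentials of $\cQ(-1^{2g+1},2g-3)$ and $\cQ(-1^{2g+2},2g-2)$ from the two equidimensional cases above. The subtle bookkeeping is the interplay with spin: for $g \geq 4$ the hyperelliptic flat surfaces, though of a single fixed parity, form a \emph{proper} component within that parity class, producing the third component in $\omoduli(2g-2)$ and $\omoduli(2k,2k)$; whereas in $g=3$ they exhaust their parity class, so that $\omoduli[3](4)$ and $\omoduli[3](2,2)$ retain only the two components $(\cdot)^\hyp$ and $(\cdot)^\odd$.

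The main work, and the main obstacle, is the upper bound: that any two surfaces with equal invariants lie in the same component. The strategy is to pass from the stratum to the combinatorics of its Rauzy class. Under Rauzy--Veech induction every flat surface reduces to a canonical interval-exchange datum, and two surfaces lie in the same component if and only if their data lie in the same Rauzy class; I would thus reduce the statement to a purely combinatorial connectivity problem on permutations. One then carries out the reduction by explicit surgeries on flat surfaces --- breaking apart and recombining zeros, and bubbling handles to pass between configurations --- showing that up to these moves every datum can be brought to one of a short list of normal forms indexed exactly by the pair (spin parity, hyperellipticity). Controlling this reduction, and in particular checking that the surgeries preserve the invariants while connecting all data of a fixed invariant, is where essentially all the difficulty lies; once it is in place, the low-genus count of which strata are connected, have two components, or have three follows by matching the normal forms against the invariant values from the previous step.
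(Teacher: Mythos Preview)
The paper does not prove this theorem: it is stated with attribution to \cite{kz03} and no proof is given, since it is being recalled as background for the classification of strata. There is therefore nothing in the paper to compare your proposal against.

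That said, your outline is a reasonable high-level summary of the Kontsevich--Zorich argument: the two invariants (spin parity via the Arf formula, and hyperellipticity) give the lower bound on the number of components, and the upper bound is obtained by reducing any flat surface to a short list of normal forms via explicit surgeries (bubbling handles, breaking up zeros), together with an inductive structure on genus. One point to be careful about: the connectivity argument in \cite{kz03} is not literally carried out at the level of Rauzy classes; rather, it proceeds by direct topological surgeries on flat surfaces and an induction reducing to low genus, with the Rauzy-class correspondence appearing only as a consequence. Your sketch conflates these two viewpoints somewhat, but the overall shape is right.
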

\par
Consider the partition $(2, \ldots, 2)$. For 
$(X,\omega) \in \omoduli(2,\ldots, 2)^{\odd}$ with 
div($\omega) = 2\sum_{i=1}^{g-1}p_i$, the line bundle $\eta = \OO_X(\sum_{i=1}^{g-1}p_i)$ 
is an odd theta characteristic. Therefore, we have a natural morphism 
$$f:  \omoduli(2,\ldots, 2)^{\odd}\rightarrow \Sgo. $$
Note that $f$ contracts the locus where $h^{0}(\eta) > 1$. 
Similarly one can define such a morphism for even spin structures. 
\par

\subsection{Picard groups of moduli spaces}\label{sec:picmoduli}

Let $\moduli[{g,n}]$ be the moduli space (treated as a stack instead of the course moduli scheme) of genus $g$ curves with
$n$ ordered marked points and let $\moduli[{g,[n]}]$ be the moduli space of genus $g$ curves with
$n$ unordered marked points. We write
$\Pic(\cdot)$ for the rational Picard group $\Pic_{\fun}(\cdot)_\QQ$ of a moduli stack (see e.g. \cite[Section 3.D]{harrismorrison} for more details).
\par
We fix some standard notation for elements in the Picard group. Let $\lambda$
denote the first Chern class of the Hodge bundle. Let $\delta_i$, $i=1,\ldots, \lfloor g/2 \rfloor$
be the boundary divisor of $\barmoduli$ whose generic element is a smooth curve of genus $i$
joined at a node to a smooth curve of genus $g-i$. The generic element of the
boundary divisor $\delta_0$ is an irreducible nodal curve of geometric genus $g-1$. In the literature sometimes 
$\delta_0$ is denoted by $\delta_{\irr}$. We write $\delta$ for the total boundary class. 
\par
For moduli spaces with marked points we denote by $\omega_\rel$ the
relative dualizing sheaf of $\barmoduli[{g,1}] \to \barmoduli$ and $\omega_{i,\rel}$ its pullback to $\barmoduli[{g,n}]$ via
the map forgetting all but the $i$-th marked point. For a set $S \subset
\{1,\ldots,n\}$ we let $\delta_{i;S}$ denote the boundary divisor whose generic 
element is a smooth curve of genus $i$ joined at a node to a smooth curve of genus $g-i$
and the sections in $S$ lying on the first component.
\par
\begin{theorem}
The rational Picard group of $\barmoduli$ is generated by $\lambda$ and the
boundary classes $\delta_i$, $i=0,\ldots, \lfloor g/2 \rfloor$. 
\par
More generally, the rational Picard group of $\barmoduli[{g,n}]$ is generated by $\lambda$,
$\omega_{i,\rel}$, $i=1,\ldots,n$, by $\delta_0$ and by 
$\delta_{i;S}$,  $i=0,\ldots, \lfloor g/2 \rfloor$, 
where $|S| > 1$ if $i=0$ and $1 \in S$ if $i= g/2$.
\end{theorem}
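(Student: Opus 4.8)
The plan is to reduce the statement to a computation in degree-two rational cohomology and then to induct on the number of marked points; this is the route of Harer and of Arbarello--Cornalba. The starting point is that each of these smooth Deligne--Mumford stacks satisfies $H^1(\cdot,\QQ)=0$, so the first Chern class embeds $\Pic(\cdot)$ into $H^2(\cdot,\QQ)$, and moreover in the range under consideration every degree-two class is algebraic, so this embedding is an isomorphism. It therefore suffices to exhibit the asserted classes and to verify that they span $H^2$.

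For the base case $n=0$ I would invoke Harer's computation that $H^2(\moduli,\QQ)=\QQ\cdot\lambda$ for $g\ge 3$, the small-genus cases (in particular $g=2$) being handled separately via Mumford's explicit description of $\overline{\mathcal M}_2$. To pass from the open stratum to the compactification, apply the cohomology localization sequence for the open inclusion $\moduli\hookrightarrow\barmoduli$, whose complement is the union of the boundary divisors $\delta_0,\dots,\delta_{\lfloor g/2\rfloor}$:
\[
\bigoplus_{i=0}^{\lfloor g/2\rfloor}\QQ\,[\delta_i]\longrightarrow H^2(\barmoduli,\QQ)\longrightarrow H^2(\moduli,\QQ)\longrightarrow 0 .
\]
Here the right-hand map is restriction and the left-hand map sends the generators to the boundary divisor classes. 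Restriction is surjective since $\lambda$ extends over the boundary, and the boundary classes surject onto its kernel; hence $\lambda,\delta_0,\dots,\delta_{\lfloor g/2\rfloor}$ generate, which is the first assertion.

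For the marked-point case I would induct on $n$ using the forgetful morphism $\pi\colon\barmoduli[{g,n}]\to\barmoduli[{g,n-1}]$, which realizes the former as the universal curve over the latter. Pulling back the generators supplied by the inductive hypothesis accounts for $\lambda$, for $\omega_{i,\rel}$ with $i<n$, and for the boundary divisors not involving the last point. What is genuinely new lives in the relative Picard group of $\pi$: the relative dualizing sheaf, whose class is $\omega_{n,\rel}$, together with the classes of the components of the reducible fibers, which are exactly the boundary divisors $\delta_{i;S}$ with $n\in S$ (collision of the last section with others producing the divisors $\delta_{0;S}$). The Leray spectral sequence for $\pi$ organizes $H^2$ of the total space, and the crucial simplification is that the cross term $H^1(\barmoduli[{g,n-1}],\QQ)\otimes H^1(\text{fiber})$ vanishes because $H^1$ of the base is zero; this is precisely what prevents spurious classes from appearing and pins down the generating set.

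The main obstacle is the inductive step, namely the precise identification of the relative Picard group together with the bookkeeping of boundary divisors. One must check that $\omega_{n,\rel}$ and the fibral boundary components genuinely exhaust the new classes, i.e.\ that the Leray filtration contributes nothing further, and one must match these components correctly with the $\delta_{i;S}$, respecting the normalizations: $|S|>1$ when $i=0$ because a rational bubble carrying the points of $S$ must have at least two marked points in addition to the node to be stable, and $1\in S$ when $i=g/2$ to select a representative under the symmetry exchanging the two components of equal genus. The low-genus anomalies outside Harer's range, settled by direct computation, and the stacky rather than coarse nature of the Picard group, which is what makes the rational comparison with cohomology clean, are the remaining points demanding care.
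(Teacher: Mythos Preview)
The paper does not give its own proof of this statement; it is quoted as background and attributed to Harer's computation of $H^2(\moduli,\QQ)$, with pointers to Mumford and to Arbarello--Cornalba for the stack/scheme comparison and the pointed case. Your sketch is precisely the argument those references carry out: Harer for the open locus, the Gysin/localization sequence to add the boundary classes, and the Arbarello--Cornalba induction on $n$ via the universal curve, with the vanishing of $H^1$ of the base killing the cross term in Leray. So your approach matches what the paper invokes, and the outline is correct.
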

The above theorem essentially follows from Harer's result $H^0(\barmoduli, \QQ) \cong \QQ[\lambda]$ \cite{harer}. 
The reader may also refer to \cite{mumfordstability} for a comparison between the rational Picard group of the coarse moduli scheme and of the moduli stack, as well as \cite{AC} for the Picard group with integral coefficients. 
\par
Alternatively, define $\psi_i \in \Pic(\barmoduli[{g,n}])$ to be the class
with value $-\pi_*(\sigma_i^2)$ on any family of stable genus $g$ curves $\pi: \cX \to C$ with section $\sigma_i$ corresponding to the $i$-th marked point. By induction on $n$, we have the relation (see e.g. \cite[p. 161]{AC} and \cite[p. 108]{Logan})
$$ \omega_{i,\rel} = \psi_i - \sum_{i\in S} \delta_{0; S}.  $$ 
Consequently, a generating set of $\Pic(\barmoduli[{g,n}])$ can also be formed
by the $\psi_i$, $\lambda$ and boundary classes.
\par
For a divisor class $D = a \lambda -  \sum_{i=0}^{\lfloor g/2 \rfloor} b_i \delta_i$ in $\Pic(\barmoduli)$, 
define its {\em slope} to be
$$ s(D) = \frac{a}{b_0}.$$ 
For our purpose the higher boundary divisors need not to be considered, as \teichm curves generated by Abelian differentials
do not intersect $\delta_i$ for $i > 0$ (see Corollary~\ref{cor:zerosatinf}). 

\subsection{Linear series on curves}
Many divisors on moduli spaces of curves are related to the geometry of \emph{linear series}. Here we review some basic 
properties of linear series on curves (see \cite{ACGH} for a comprehensive introduction). 

Let $X$ be a genus $g$ curve
and $\cL$ a line bundle of degree $d$ on $X$. Denote by $|\cL|$ the linear system parameterizing sections of $\cL$ mod scalars, i.e. 
$$|\cL| = \{ \mbox{div}(s)\ |\ s\in H^0(\cL) \}.$$ 
If $h^0(\cL) = n$, then $|\cL| \cong \PP^{n-1}$. For a  
(projective) $r$-dimensional linear subspace $V$ of $|\cL|$, call $(\cL, V)$ a linear series $g^r_d$. If $\cL\sim \OO_X(D)$ for a divisor $D$ on $X$, we also denote by $|\OO_X(D)|$ or simply 
by $|D|$ the linear system.  

If all divisors parameterized in a linear series contain a common point $p$, then $p$ is called a \emph{base point}. Otherwise, this linear series is called \emph{base-point-free}. A base-point-free $g^r_d$ induces a morphism 
$X\rightarrow \PP^r$. The divisors in this $g^r_d$ correspond to (the pullback of) hyperplane sections 
of the image curve. For instance, a hyperelliptic curve admits a $g^1_2$, i.e. a double cover of $\PP^1$. The following fact will be used frequently when we prove the disjointness of Teichm\"{u}ller curves with a geometrically defined divisor. 

\begin{prop}\label{bpf}
A point $p$ is not a base point of a linear system $|\cL|$ if and only if 
$h^0(\cL) -1 =  h^0(\cL(-p))$, 
where $\cL(-p) = \cL \otimes \OO_X(-p)$.  
\end{prop}

\begin{proof}
By the exact sequence 
$$ 0 \to \cL(-p) \to \cL \to \OO_p \to 0, $$
we know $h^0(\cL(-p))$ is either equal to $h^0(\cL)$ or $h^0(\cL) - 1$. The former happens if and only if every section of $|\cL|$ vanishes at $p$, in other words, if and only if $p$ is a base point of $|\cL|$. 
\end{proof}

The canonical linear system is a $g^{g-1}_{2g-2}$, which induces an embedding to $\PP^{g-1}$ for a non-hyperelliptic curve. The image of this embedding is called a \emph{canonical curve}. Let $D$ be an effective divisor of degree $d$ on $X$. Denote by $\overline{\phi_K(D)}$ the linear subspace in $\PP^{g-1}$ spanned by the images of points in $D$ under the canonical map $\phi_K$. The following geometric version of the Riemann-Roch theorem is useful for the study of canonical curves (see \cite[p. 12]{ACGH} for more details). 

\begin{theorem}[Geometric Riemann-Roch]
In the above setting, we have 
$$\mbox{{\rm dim}} \ |D| = d - 1 - \mbox{{\rm dim}}\ \overline{\phi_K(D)}.$$ 
\end{theorem}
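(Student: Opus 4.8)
The plan is to deduce the statement from the ordinary Riemann--Roch theorem together with the projective duality between points and hyperplanes furnished by the canonical map. Writing $K_X$ for the canonical bundle, ordinary Riemann--Roch reads $h^0(\OO_X(D)) - h^0(K_X - D) = d - g + 1$, so that $\dim|D| = h^0(\OO_X(D)) - 1 = d - g + h^0(K_X - D)$. Thus it suffices to identify $h^0(K_X - D)$ with $g - 1 - \dim \overline{\phi_K(D)}$, and the entire content of the statement is this geometric reinterpretation of $h^0(K_X-D)$.

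To carry this out I would first recall that the canonical map realizes $\PP^{g-1} = \PP(H^0(X,K_X)^\vee)$, so that nonzero sections $s \in H^0(X,K_X)$, up to scalar, correspond to the hyperplanes $H_s \subset \PP^{g-1}$, and a point $p \in X$ satisfies $\phi_K(p) \in H_s$ precisely when $s(p) = 0$. Consequently a hyperplane $H_s$ contains the image of every point of $D$ -- equivalently contains the linear span $\overline{\phi_K(D)}$ -- if and only if $s$ vanishes along $D$, i.e.\ if and only if $s \in H^0(X, K_X - D)$. This gives a canonical identification of the projective space of hyperplanes containing $\overline{\phi_K(D)}$ with $\PP(H^0(X,K_X-D))$, whose dimension is $h^0(K_X-D) - 1$.

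The second ingredient is the elementary fact from projective geometry that the hyperplanes of $\PP^{g-1}$ containing a fixed linear subspace $\Lambda$ form a linear system of dimension $(g-1) - \dim\Lambda - 1 = g - 2 - \dim\Lambda$, this being simply the annihilator of $\Lambda$ in the dual projective space. Applying this with $\Lambda = \overline{\phi_K(D)}$ and comparing with the previous paragraph yields $h^0(K_X - D) - 1 = g - 2 - \dim\overline{\phi_K(D)}$, hence $h^0(K_X - D) = g - 1 - \dim\overline{\phi_K(D)}$. Substituting into the Riemann--Roch expression above gives $\dim|D| = d - g + (g - 1 - \dim\overline{\phi_K(D)}) = d - 1 - \dim\overline{\phi_K(D)}$, as claimed.

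I expect the only genuine subtlety to be the bookkeeping when $D$ is non-reduced, or when $X$ is hyperelliptic so that $\phi_K$ is not an embedding. The identification ``$s$ vanishes on $D$ iff $s \in H^0(K_X - D)$'' is correct with $D$ read as a divisor with multiplicities, but then $\overline{\phi_K(D)}$ must be understood as the span including the appropriate osculating (infinitely near) directions at repeated points, rather than the naive span of the finitely many image points; making the two sides match in that generality is where care is required. For the applications in this paper $D$ will typically be reduced and $X$ non-hyperelliptic, so the clean argument above applies directly, and I would merely remark on the general case rather than belabour it.
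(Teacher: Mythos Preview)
Your proof is correct and is precisely the standard argument; the paper itself does not supply a proof but merely cites \cite[p.~12]{ACGH}, where exactly this derivation appears. Your remark about the interpretation of $\overline{\phi_K(D)}$ for non-reduced $D$ (via osculating spaces) is the right caveat, and indeed the paper only invokes the theorem in situations where $X$ is non-hyperelliptic and the relevant divisors are explicit enough that this subtlety causes no trouble.
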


We will focus on the geometry of canonical curves of low genus. Curves of genus~$2$ are always hyperelliptic. For non-hyperelliptic curves of genus $3$, their canonical images correspond to plane quartics. 

For $g = 4$, a non-hyperelliptic canonical curve $X$ in $\PP^3$ is a complete intersection cut out by a quadric and a cubic. Any divisor $D = p+q+r$ in a $g^1_3$ of $X$ spans a line in $\PP^3$, by Geometric Riemann-Roch. This line intersects $X$ at $p,q,r$, hence it is contained in the quadric by B\'{e}zout. If the quadric is smooth, it is isomorphic to $\PP^1\times \PP^1$. It has two families of lines, called two \emph{rulings}. Any line in a ruling intersects $X$ at three points (with multiplicity), hence $X$ has two different linear systems $g^1_3$ corresponding to the two rulings. If the quadric is singular, then it is a quadric cone with a unique ruling, hence $X$ has a unique $g^1_3$. 
 
For $g=5$, a general canonical curve is cut out by three quadric hypersurfaces in $\PP^4$ and it does not have any $g^1_3$. On the other hand, a genus $5$ curve with a $g^1_3$, i.e. a \emph{trigonal curve}, has canonical image contained in a \emph{cubic scroll surface}, which is a ruled surface of degree $3$ in 
$\PP^4$. By Geometric Riemann-Roch, divisors in the $g^1_3$ span rulings that sweep out the surface (see e.g. \cite[Section 2.10]{reid}).  

Recall that on a nodal curve $X$, Serre duality and Riemann-Roch hold
with the dualizing sheaf $\omega_X$ in place of the canonical bundle (see e.g. \cite[Section 3.A]{harrismorrison} for more details). 
We also need the following generalized Clifford's theorem for Deligne-Mumford stable curves (see e.g. \cite[p. 107]{ACGH} for the case 
of smooth curves and \cite[Theorems 3.3, 4.11]{caplinear} for the remaining cases). Following \cite[Section~2.1]{caplinear} 
let $\delta_Z = Z \cdot Z^c$ be the number of intersection points of $Z$ with its complement
and $w_Z = 2g(Z)-2 + \delta_Z$. A divisor $D$ of degree $d$ on a stable curve $X$ is {\em balanced}, if for every irreducible component $Z \subset X$ we have 
$$d \frac{w_Z}{2g-2} - \frac{\delta_Z}{2} \leq \deg(D|_Z) \leq d \frac{w_Z}{2g-2} - \frac{\delta_Z}{2}.$$
\par
\begin{theorem} [Clifford's theorem]\label{thm:clifford}
Let $X$ be a stable curve and $D$ an effective divisor on $X$ with $\deg(D) \leq 2g-1$.
Then we have $$h^0(\cO_X(D))-1 \leq \deg(D)/2 $$
if one of the following conditions holds: (i) $X$ is smooth; (ii) $X$ has at most two components
and $D$ is balanced; (iii) $X$ does not have separating
nodes, $\deg(D) \leq 4$ and $D$ is balanced.
\end{theorem}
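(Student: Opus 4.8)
The plan is to treat the three hypotheses separately, reducing the two nodal cases (ii) and (iii) to the smooth case (i) by means of the exact sequence attached to the nodes.

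For \emph{case (i)} I would run the classical special/non-special dichotomy. If $D$ is non-special, Riemann--Roch gives $h^0(\OO_X(D)) = \deg(D) - g + 1$, and since $\deg(D) \le 2g-1 < 2g$ this already yields $h^0(\OO_X(D)) - 1 = \deg(D) - g \le \deg(D)/2$. If $D$ is special, so that $K_X - D$ is effective as well, the classical argument applies verbatim: the inequality $h^0(L) + h^0(M) \le h^0(L \otimes M) + 1$ for line bundles with sections on an integral curve gives $h^0(\OO_X(D)) + h^0(\OO_X(K_X - D)) \le g + 1$, and combining this with $h^0(\OO_X(D)) - h^0(\OO_X(K_X-D)) = \deg(D) - g + 1$ yields $2\bigl(h^0(\OO_X(D)) - 1\bigr) \le \deg(D)$.

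For the reducible cases I would set up the reduction once and for all. Write $\nu\colon X^\nu \to X$ for the partial normalization separating the irreducible components $C_1,\dots,C_r$, put $L = \OO_X(D)$, $L_i = L|_{C_i}$, $d_i = \deg(L_i)$, and let $\Delta_i \subset C_i$ be the divisor of node-preimages on $C_i$. The node sequence $0 \to L \to \bigoplus_i L_i \to \bigoplus_{\text{nodes}} \CC \to 0$ gives $h^0(L) = \sum_i h^0(L_i) - \operatorname{rk}(\Phi)$, where $\Phi$ is the gluing map comparing the two values of a section at each node. I would record two facts: first, $\operatorname{rk}(\Phi) \ge \operatorname{rk}(\Phi_i) = h^0(L_i) - h^0(L_i(-\Delta_i))$, so that node base points of $|L_i|$ translate into a deficiency absorbed by the inner Clifford bound on $L_i(-\Delta_i)$; second, that each component, being integral, obeys the bound from case (i) (with $\omega_X$ in place of $K$) when $L_i$ is special, and obeys $h^0(L_i) = d_i - g(C_i) + 1$ when it is non-special. \emph{Case (ii)} then splits on $r\le 2$: the irreducible subcase is immediate from (i), while for $X = C_1 \cup C_2$ with $\delta \ge 1$ nodes one has $h^0(L) \le h^0(L_1) + h^0(L_2)$ and the whole point is to save a single unit. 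This comes either from the gluing ($\operatorname{rk}(\Phi) \ge 1$ as soon as some section is nonzero at a node) when both $L_i$ are suitably positive, or from node base points lowering the Clifford bound by $\delta/2 \ge 1/2$ on each component. In every configuration the balanced hypothesis is invoked to pin the partial degrees to the window $|d_i - d\,w_{C_i}/(2g-2)| \le \delta/2$, which is exactly the control needed to make the per-component estimates sum to $d/2+1$ rather than $d/2+2$; verifying this numerically across the special/non-special cases is the technical heart of (ii).

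For \emph{case (iii)} the same sequence applies with arbitrarily many components, so the correction $\operatorname{rk}(\Phi)$ is now governed by the first Betti number of the dual graph. The hypothesis that no node is separating makes this graph $2$-edge-connected, guaranteeing that the gluing is as large as possible, while $\deg(D) \le 4$ together with balancedness forces every $d_i$ to be so small that each component contributes almost no sections; the argument then reduces to finitely many degree distributions, which I would check directly. The main obstacle throughout is precisely this correction term. A separating node produces sections supported on one side that $\Phi$ cannot cancel, so the inequality genuinely fails without the restrictions: limiting to two components in (ii) (where at most one such class appears and balancedness pins it down) and forbidding separating nodes in (iii) are exactly what keeps $\operatorname{rk}(\Phi)$ large enough to recover the $-1$. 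I expect the delicate bookkeeping to be this interplay of specialness, the gluing rank, and the balancedness window, which is organized in \cite{caplinear}.
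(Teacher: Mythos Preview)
The paper does not prove this theorem at all: it is stated as a quoted result, with case (i) attributed to \cite[p.~107]{ACGH} and cases (ii) and (iii) to \cite[Theorems~3.3, 4.11]{caplinear}. So there is no ``paper's own proof'' to compare against, only the cited references.

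Your sketch is broadly in the spirit of Caporaso's approach in \cite{caplinear}: normalize, bound on each component, and control the gluing map using balancedness. A couple of points are worth flagging. First, when you separate the irreducible components, each $C_i$ may still carry self-nodes, so you need Clifford for irreducible \emph{nodal} curves, not just smooth ones; you gesture at this (``integral curve''), but it is a separate step that itself requires care. Second, your treatment of (ii) and (iii) is honestly labeled as a plan (``verifying this numerically \dots\ is the technical heart'', ``finitely many degree distributions, which I would check directly''), not a proof: the actual casework is where the content lives, and in Caporaso's paper it is substantial. Since the authors of this paper were content to cite the result, a sketch that identifies the right exact sequence, the role of balancedness, and the cited source is adequate for the purpose here; but you should not present it as a self-contained proof.
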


Finally we need to consider the canonical system of a stable curve associated to its dualizing sheaf. This will help us discuss the boundary of \teichm curves. Recall the \emph{dual graph} of a nodal curve whose vertices correspond to its irreducible components and edges correspond to intersections of these components. A graph is called \emph{$n$-connected} if one has to remove 
at least $n$ edges to disconnect the graph. The following fact characterizes canonical maps of stable curves based on the type of their dual graphs 
(see \cite[Proposition 2.3]{Hassett}). 

\begin{prop}
\label{prop:dualgraph}
Let $X$ be a stable curve of genus $\geq 2$. Then the canonical linear system 
$|\omega_X|$ is base point free (resp. very ample) if and only if the dual graph of $X$ 
is two-connected (resp. three-connected and $X$ is not in the closure of the locus of hyperelliptic curves). 
\end{prop}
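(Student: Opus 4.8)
The plan is to prove Proposition~\ref{prop:dualgraph} by translating each of the two geometric conditions --- base-point-freeness and very-ampleness of $|\omega_X|$ --- into a combinatorial condition on the dual graph, using Proposition~\ref{bpf} together with the Riemann--Roch theorem for the dualizing sheaf $\omega_X$ on a stable curve (valid by Serre duality as recalled in the text). Since this is stated as a known fact citing \cite[Proposition 2.3]{Hassett}, I would reconstruct the argument rather than invent a new one, organizing it around the interplay between sections of $\omega_X$ and the connectivity of the graph.

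First I would handle base-point-freeness. By Proposition~\ref{bpf}, a point $p$ fails to be a base point of $|\omega_X|$ precisely when $h^0(\omega_X(-p)) = h^0(\omega_X) - 1 = g-1$. By Riemann--Roch for $\omega_X$ (which has degree $2g-2$ and $h^0 = g$), the sheaf $\omega_X(-p)$ of degree $2g-3$ satisfies $h^0(\omega_X(-p)) - h^0(\OO_X(p)) = (2g-3) - g + 1 = g-2$, so the failure of base-point-freeness at $p$ is equivalent to $h^0(\OO_X(p)) \geq 2$, i.e.\ the existence of a nonconstant global function with a single simple pole at $p$. On a nodal curve this forces $X$ to separate: the only way a rational function can have exactly one simple pole is if removing a single node (or the locus near $p$) disconnects the curve, which is exactly the failure of two-connectivity. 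Thus the key step is showing that a separating node (equivalently, a cut-edge in the dual graph, or more generally a place where the graph is not two-edge-connected) produces such a function, and conversely that two-connectivity forbids it. I would make this precise by analyzing, for a proposed base point lying on a component $Z$, how sections of $\omega_X$ restrict to $Z$ and to $Z^c$ and how they must match at the nodes $Z \cap Z^c$.

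Second I would treat very-ampleness, which by the standard separation-of-points-and-tangents criterion requires that for all pairs $p,q$ (including $p=q$ as a tangent condition) one has $h^0(\omega_X(-p-q)) = h^0(\omega_X) - 2 = g-2$. Running the same Riemann--Roch computation, failure of this for distinct $p,q$ is equivalent to $h^0(\OO_X(p+q)) \geq 2$, i.e.\ the existence of a $g^1_2$-type pencil, and this is where the \emph{two}-connected-but-not-three-connected graphs and the hyperelliptic locus enter: a function with two simple poles can exist either because $X$ is hyperelliptic (the smooth or admissible-cover analogue) or because the graph can be disconnected by removing two edges. So the very-ample case splits into the genuinely geometric obstruction (hyperellipticity, which must be excluded by hypothesis) and the combinatorial obstruction (three-connectivity of the graph). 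I would organize this as: three-connectivity plus non-hyperelliptic rules out both sources of a pencil, hence gives very-ampleness; conversely either failure produces a pencil and destroys very-ampleness.

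The main obstacle I expect is the careful bookkeeping at the nodes for \emph{non-reduced} base schemes and for the tangent/ramification conditions when $p=q$, i.e.\ verifying very-ampleness at nodes and at points infinitely near a node, where the dualizing sheaf behaves differently from a line bundle on a smooth curve and where the matching conditions of sections across a node must be tracked explicitly. A secondary subtlety is making the passage between "separating the graph by removing $n$ edges" and "existence of a function with $n$ poles" fully rigorous for stable curves with arbitrarily many components, rather than the two-component case; here I would argue by considering a minimal disconnecting set of edges and constructing the relevant rational function to have poles concentrated at the corresponding nodes, using that stability forces each component to carry enough of $\omega_X$. Given that the statement is attributed to \cite{Hassett}, for the purposes of this paper it suffices to reduce cleanly to that reference after establishing the Riemann--Roch dictionary, so I would not belabor the full combinatorial induction.
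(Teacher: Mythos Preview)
The paper does not supply its own proof of this proposition; it is stated as a known fact with a citation to \cite[Proposition 2.3]{Hassett} and nothing more. So there is no argument in the paper to compare your proposal against. You already recognize this and frame your write-up as a reconstruction rather than an original proof, which is the right stance.

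As a reconstruction, your Riemann--Roch dictionary (base-point-freeness at $p$ $\Leftrightarrow$ $h^0(\OO_X(p))=1$; separation of $p,q$ $\Leftrightarrow$ $h^0(\OO_X(p+q))=1$) is the standard and correct first move, and the identification of the two obstructions in the very-ample case (hyperellipticity versus a two-edge cut of the dual graph) is exactly how the argument in \cite{Hassett} is organized. Your acknowledged gaps---handling the case where $p$ is a node, and the tangent condition $p=q$ on a singular curve---are genuine and are precisely where the work in Hassett's paper lies; your sketch does not resolve them, but since the paper itself defers entirely to the citation, nothing more is required here.
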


\subsection{Special divisors on moduli spaces} \label{sec:specialdivisors}

In the application for \teichm curves generated by flat surfaces
we do not care
about the coefficients of $\delta_i$ for $i \geq 1$ in the 
divisor classes in $\Pic(\barmoduli)$, since
\teichm curves do not intersect those components
(see Corollary~\ref{cor:zerosatinf}). As shorthand, we use $\delta_\other$ to 
denote some \emph{linear combination of  $\delta_i$ for $i \geq 1$}.
Similarly, in $\barmoduli[{g,n}]$ we use $\delta_\other$ to 
denote some \emph{linear combination of all boundary divisors but  $\delta_0$}. 
By the same reason we do not distinguish between $\omega_{i,\rel}$ and $\psi_i$ for a divisor class, since 
they only differ by boundary classes in $\delta_\other$. 
\subsubsection*{The hyperelliptic locus in $\barmoduli[3]$} 

Denote by $H \subset \barmoduli$ the
closure of locus of genus $g$ hyperelliptic curves. 
We call $H$ the hyperelliptic locus in $\barmoduli$. 
Note that $H$ is a divisor if and only if $g=3$. A stable curve $X$ lies
in the boundary of $H$ if there is an {\em admissible cover} of degree two
$\tilde{X} \to \PP^1$, for some nodal curve $\tilde{X}$ whose stabilization is $X$. We refer to \cite[Section 3.G]{harrismorrison} for an excellent introduction to admissible covers.
\par
The class of the hyperelliptic locus $H \subset \barmoduli[3]$ calculated e.g.\ in 
\cite[p. 188]{harrismorrison} is given as follows:
\begin{equation} \label{eq:classofH}
H = 9\lambda - \delta_0 - 3\delta_1, 
\end{equation}
hence it has slope $s(H) = 9$.  

\subsubsection*{Divisors of Weierstrass points}

Let $W\subset \barmoduli[{g,1}]$ be the divisor parameterizing a curve 
with a Weierstrass point. In \cite[(2.0.12) on p. 328]{Cu}, the class 
of $W$ was calculated for all $g$, which specializes as follows: 
\begin{equation} \label{eq:classofW}
W = 6\omega_\rel - \lambda - \delta_\other \quad  \text{for} \quad g=3. 
\end{equation}
\par

\subsubsection*{The theta-null divisor}

Consider the divisor $\Theta\subset \barmoduli[{g,1}]$ parameterizing $(X, p)$ such that 
$X$ admits an odd theta characteristic whose support contains $p$.  
The class of $\Theta$ was calculated in \cite[Theorem 0.2]{F2}, which specializes as follows:
\begin{equation}\label{eq:thetanull}
\Theta = 30\lambda + 60\omega_\rel - 4\delta_0 - \delta_\other \quad \text{for} \quad g=4. 
\end{equation}

\subsubsection*{The Brill-Noether divisors}

The Brill-Noether locus $BN^r_d$ in $\barmoduli$ parameterizes
curves $X$ that possesses a $g^r_d$. If the Brill-Noether number
$$\rho(g,r,d) = g - (r+1)(g-d+r) = -1,$$ then $BN^r_d$ is indeed a divisor. 
We remark that nowadays $\mathcal{M}^r_{g,d}$ is more commonly used to denote 
the Brill-Noether divisors, but we decide to reserve $\mathcal{M}$ for the moduli space only.     
\par
There are pointed versions of this divisor. Let $\ulw = (w_1,\ldots,w_n)$ be a tuple of
integers. Let $BN^r_{d,\ulw}$ be the locus in $\barmoduli[{g,n}]$
of pointed curves $(X,p_1,\ldots,p_n)$ with a line bundle $\cL$ of degree $d$ such that
$\cL$ admits a $g^r_d$ and $h^0(\cL(-\sum w_i p_i)) \geq r$. This Brill-Noether locus is a divisor, if
the generalized Brill-Noether number
$$ \rho(g,r,d,\ulw) = g-(r+1)(g-d+r)-r(|\ulw|-1) = -1.$$
The hyperelliptic divisor and
the Weierstrass divisor could also be interpreted as Brill-Noether divisors, but
we stick to the traditional notation for them.
\par
The class of these pointed divisors has been calculated in many special cases, in
particular in \cite{Logan} and later in \cite{F1}. 
We collect the results that are needed here.
\par
The class of the classical Brill-Noether divisor for $r=1$ was calculated in \cite[p. 24]{harrismumford}, 
in particular
\begin{equation}
\label{BN}
 BN^1_{3} = 8\lambda - \delta_0  - \delta_\other \quad \text{for} \quad g=5.
\end{equation}
If $|\ulw|=g=d$ and $r=1$ the class of the Brill-Noether divisor was 
calculated in \cite[Theorem 5.4]{Logan}. It has class
$$ BN^1_{g,\ulw} = -\lambda + \sum_{i=1}^k \frac{w_i(w_i+1)}{2}
\omega_{i,\rel} - \delta_\other.$$  In particular for $\ulw = (1,2)$, it specializes as follows: 
\begin{equation} \label{eq:BN(3,1,2)}
BN^1_{3,(1,2)} = -\lambda + \omega_{1,\rel} + 3\omega_{2,\rel} - \delta_\other \quad \text{for} \quad g=3,
\end{equation}
For $\ulw = (1,1,2)$, it specializes as follows:  
\begin{equation}\label{BN(1,1,2)}
BN^1_{4,(1,1,2)} = -\lambda + \omega_{1,\rel} + \omega_{2,\rel} + 3\omega_{3,\rel} - \delta_\other \quad \text{for} \quad g=4. 
\end{equation}
\par
If $r=1$ and $\ulw = (2)$, the class of the divisor was also calculated in \cite{Logan}. It specializes to
\begin{equation}
\label{BNp}
 BN^1_{3,(2)} = 4\omega_\rel + 8\lambda - \delta_0 - \delta_\other \quad \text{for} \quad g=4.
\end{equation}
\par
If all $w_i=1$ and $n=r+1$ the Brill-Noether divisor specializes to the divisor $\Lin$ 
calculated in \cite[Section 4.2]{F1}. In particular \cite[Theorem 4.6]{F1} gives
\begin{equation}
\label{eq:Lin}
 \Lin^1_{3} = BN_{3,(1,1)}^1 = -\omega_{1,\rel} -\omega_{2,\rel}  + 8\lambda - \delta_0 -\delta_\other
\quad \text{for} \quad g=4.
\end{equation}
\par
Generalizing the calculation of Logan for $r=1$ and $n=1$ to arbitrary weight $w_1$, 
one obtains the divisor called $\Nfold^1_{d}(1)$ in the proof of \cite[Theorem 4.9]{F1}. From 
the proof one deduces 
\begin{equation}
\label{eq:Nfold}
 \Nfold^1_{4}(1) = BN^1_{4,(3)} = 7\lambda + 15\omega_\rel - \delta_0 - \delta_\other
\quad \text{for} \quad g=5.
\end{equation}
\par
$\Nfold(1)$ is a degeneration of the divisor $\Nfold$ in \cite{F1}. A partial degeneration
is $\Nfold(2) = BN^1_{4,(1,2)}$ in $\barmoduli[{5,2}]$. It has 
class 
\begin{equation}
\label{eq:Nfold2}
\Nfold^1_4(2) = BN^1_{4,(1,2)} =  7\lambda + 7 \omega_{1,\rel} + 2 \omega_{2,\rel} - \delta_{0} - \delta_\other
\quad \text{for} \quad g=5. 
\end{equation}
Since this divisor class was not explicitly written out in \cite{F1}, below we give a proof. 
\par
\begin{proof}[Proof of Equation~\eqref{eq:Nfold2}]
Using the same logic in the proof of \cite[Theorems 4.6, 4.9]{F1}, $\lambda, \delta_{0}, \psi_1$ 
have non-varying coefficients in $\Nfold^1_4$, which is $BN^1_{4,(1,1,1)}$ in our notation, 
and in $\Nfold^1_4(2)$. Hence we have 
$$ \Nfold^1_4(2) = 7\lambda -\delta_0 + 2\psi_1 + c\psi_2 - e\delta_{0;\{1,2\}} - \delta_\other.$$
We have to take $\delta_{0;\{1,2\}}$ into account, because the test curves used below intersect $\delta_{0;\{1,2\}}$. 
Let $X$ be a general curve of genus five. Take a fixed general point $x_2$ on $X$ and 
move another point $x_1$ along $C$. Call this family $B_1$. 
We have 
$$B_1 \cdot \lambda = 0, \quad B_1\cdot\delta_0 = 0, \quad B_1\cdot \delta_{0;\{1,2\}} = 1, $$
$$ B_1\cdot \psi_1 =  9, \quad B_1\cdot \psi_2 = 1.$$
The intersection number $B_1\cdot \Nfold^1_4(2)$ can be calculated using \cite[Proposition 3.4]{Logan} by setting 
$a_1 = 2, a_2 = 1, g=5, h = 1$, and it equals $10$. Note that Logan counts the number of pairs 
($p_2, q_1$), which equals $5$, but for our purpose $x_1$ can be either $p_2$ or $q_1$, so we double 
the counting. We thus obtain a relation 
$$ c - e + 8 = 0. $$
Now fix a general point $x_1$ and move another point $x_2$ along  $X$. Call this family $B_2$. 
We have 
$$B_2 \cdot \lambda = 0, \quad B_2\cdot \delta_{0;\{1,2\}} = 1, $$ 
$$ B_2\cdot \psi_1 =  1, \quad B_2\cdot \psi_2 = 9. $$
The intersection number $B_2\cdot \Nfold(2)$ can also be calculated using \cite[Proposition 3.4]{Logan} by setting 
$a_1 = 1, a_2 = 2, g=5, h = 1$, and 
it equals $50$. This equals Logan's counting, since in the pair $(p_2, q_1)$ now $p_2$ has weight $2$, which distinguishes it from $q_1$. 
We then obtain another relation
$$9c - e - 48 = 0. $$  
Combining the two relations we conclude that $c= 7, e = 15$, which completes the proof.
\end{proof}

\subsubsection*{Gieseker-Petri divisors}

Consider a linear series $(\cL,V) \in G^r_d(X)$ for a linear subspace $V\subset H^0(\cL)$ of dimension $r+1$, 
$\deg(\cL) = d$ and the multiplication map
$$\mu: V \otimes H^0(\omega_X \otimes \cL^{-1}) \to H^0(\omega_X).$$
Define the Gieseker-Petri locus 
$$GP^r_{g,d} = \{[X] \in \barmoduli[g], \exists \,\text{base-point-free}\,
(\cL, V) \in G^r_d(X) \,
\text{such that}\, \mu \, \text{is not injective} \}.$$
The divisor class of the Gieseker-Petri locus in the case $r=1$ was calculated in \cite[Theorem 2]{eisenbudharris}. It specializes to 
 \begin{equation}\label{eq:GP}
 GP = 17 \lambda - 2 \delta_0 + \delta_\other \quad \text{for} \quad g=4.
 \end{equation}
Alternatively, one can describe $GP$ in $\moduli[4]$ as follows. The
canonical image of a genus $4$ non-hyperelliptic curve is contained in a quadric surface
in $\PP^3$. Then $GP$ is the closure of the locus where this quadric
is singular (see e.g. \cite[p. 196]{ACGH}).

\section{\teichm curves and their boundary points} \label{sec:Teichgen}

We quickly recall the definition of Teichm\"uller curves and of square-tiled surfaces which serve as main examples. New results on the boundary behavior of \teichm curves needed later are collected in Section~\ref{sec:propTeich}.

\subsection{\teichm curves as fibered  surfaces}

A {\em \teichm curve} $C \to \moduli$ is an algebraic curve in the moduli
space of curves that is totally geodesic with respect to the \teichm metric.
There exists a finite unramified cover $B \to C$ such that the monodromies
around the 'punctures' $\overline{B} \setminus B$ are unipotent and such that the universal family
over some level covering of $\moduli$ pulls back to a family of curves
$f: \cX \to B$. We denote by $f: \bcX \to \bB$ a relatively minimal 
semistable model of a fibered  surface of fiber genus $g$ with smooth total 
space. Let $\Delta \subset \bB$ be the set of points with
singular fibers, hence $B = \bB \setminus \Delta$. See e.g.\ \cite{moeller06} for
more on this setup. By a further finite unramified covering (outside $\Delta$)
we may suppose that the zeros of $\omega$ on $X$ extend to 
sections $\sigma_i$ of $f$. We denote by $S_i \subset \bcX$ the images of these
sections. 
\par
\teichm curves arise as the $\SL_2(\RR)$-orbit of special flat surfaces or half-translation surfaces,
called {\em Veech surfaces}. We deal here with the first case only and denote by $(X,\omega)$ a {\em generating flat surface}, if its 
$\SL_2(\RR)$ orbit gives rise to a \teichm curve. 
\teichm curves come with a uniformization $C = \mathbb H/\SL(X,\omega)$,
where $\SL(X,\omega)$ is the {\em affine group} (or  {\em Veech group}) of the flat
surface $(X,\omega)$. Let $K = \QQ(\tr(\gamma), \gamma \in \SL(X,\omega))$
denote the {\em trace field} of the affine group and let $L/\QQ$ denote the
Galois closure of $K/\QQ$.
\par
The variation of Hodge structure (VHS) over a \teichm curve decomposes
into sub-VHS
$$ R^1 f_* \CC = (\oplus_{\sigma \in \Gal(L/\QQ)/\Gal(K/\QQ)} \LL^\sigma) \oplus \MM,$$
where $\LL$ is the VHS with the standard 'affine group' representation,
$\LL^\sigma$ are the Galois conjugates 
and $\MM$ is just some representation (\cite[Proposition 2.4]{moeller06}).
One of the purposes of our work is to shed some light on what possibilities
for the numerical data of $\MM$ can occur. 
\par

\subsection{Square-tiled surfaces} \label{sec:sqtiled}

A {\em square-tiled surface} is a flat surface $(X,\omega)$, where
$X$ is obtained as a covering of a torus ramified over one point only
and $\omega$ is the pullback of a holomorphic one-form on the torus.
It is well-known that in this case $\SL(X,\omega)$ is commensurable to
$\SL_2(\ZZ)$, hence $\LL$ has no Galois conjugates or equivalently, 
the rank of $\MM$ is $2g-2$.
\par
In order to specify a square-tiled surface covered by $d$ squares, 
it suffices to specify the monodromy of the covering. Take a standard torus $E$ by 
identifying via affine translation the two pairs of parallel edges of the unit square
$[0,1] \times [0, i]$. Consider the closed, oriented paths $u = [0,1]$ on the horizontal axis and 
$r = [0, i]$ on the vertical axis. The indices $u$ and $r$ correspond to 'up' and 'right', 
respectively. Note that $u$ and $r$ form a basis of $\pi_1(E, b)$, where $b$ is a base point in $E$.  
Going along $u$ and $r$ induces two permutations $(\pi_u,\pi_r)$ on the $d$ sheets of a degree $d$ cover of $E$.  
Hence $\pi_u, \pi_r$ can be regarded as elements in the symmetric group $S_d$. Conversely, given such a pair 
$(\pi_u, \pi_r)$, one can construct a degree $d$ cover of $E$ (possibly disconnected) ramified over one point only. The domain of the covering is connected if and only if the subgroup in $S_d$ generated by $\pi_u, \pi_r$ acts transitively on the $d$ letters. Moreover, the ramification profile
over $b$ is determined by the commutator $\pi_{u}^{-1}\pi_{r}^{-1} \pi_{u}\pi_{r}$.  
\par
The surface in Figure~\ref{fig:square-tiled} corresponds to a degree 5, genus 2, connected cover of the standard torus: 
\begin{figure}[H]
    \centering
    \includegraphics[scale=0.5]{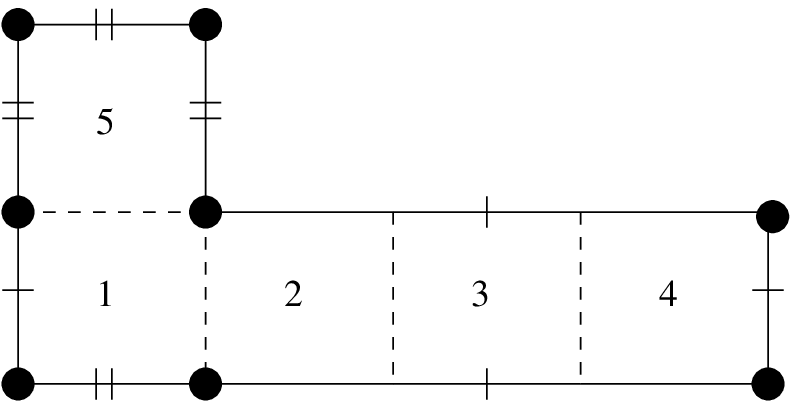}
    \caption{A square-tiled surface of degree $5$ and genus $2$}
    \label{fig:square-tiled}
   \end{figure}
It is easy to see that the monodromy permutations for this square-tiled surface are given by 
$(\pi_u = (15)(2)(3)(4), \pi_r = (1234)(5))$. Here a cycle $(a_1\ldots a_k)$ means 
the permutation sends $a_i$ to $a_{i+1}$ for $1\leq i \leq k-1$ and sends $a_k$ back to $a_1$.  
One can check that 
$\pi_{u}^{-1}\pi_{r}^{-1} \pi_{u}\pi_{r} = (154)(2)(3)$. Therefore, the corresponding covering has a unique 
ramification point marked by $\bullet$ with ramification order $2 = 3-1$ arising from the length-$3$ cycle, since 
locally the three sheets labeled by $1, 5, 4$ get permuted at that point. By Riemann-Hurwitz, the domain of the covering has genus equal to $2$. 
The pullback of $dz$ from $E$ is a one-form in the stratum $\Omega\mathcal{M}_2(2)$. 
\par
Based on the monodromy data, one can directly calculate the Siegel-Veech constant 
as well as the sum of Lyapunov exponents (introduced in the next section) for a Teichm\"uller curve 
generated by a square-tiled surface (see \cite{ekz}). Later on we will use square-tiled surfaces to produce examples of 
Teichm\"uller curves that have varying sums of Lyapunov exponents. 

\subsection{Properties of \teichm curves} \label{sec:propTeich} 

Here we collect the properties of the boundary points of \teichm curves that are needed in the proofs
in the subsequent sections. We will use $\bC$ to denote the \emph{closure} of a \teichm curve $C$ in the compactified moduli space.  
\par
Let $\mu$ be a partition of $2g-2$. If $\mu'$ is another partition and if it can be obtained from $\mu$ by successively 
combining two entries into one, we say that $\mu'$ is a degeneration of $\mu$. For instance, $(2,6)$ is a degeneration 
of $(1,1,3,3)$. Geometrically speaking, combining two entries $i, j$ corresponds to merging two zeros of order $i, j$ into a single zero of order $i+j$.  
\begin{prop} \label{prop:Teichbddisj}
Suppose $C$ is a \teichm curve generated by
a flat surface in $\omoduli[g](\mu)$ and let $\mu'$
be a degeneration of $\mu$. Then $\bC$ in $\pobarm(\mu)$ is disjoint from $\pobarm(\mu')$.
\end{prop}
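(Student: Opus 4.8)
The plan is to reduce the claim to a statement about the finitely many boundary points of $\bC$ and then to analyse the stable limit differential there.

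First I would set up the reduction using the fibered surface model $f\colon \bcX \to \bB$ from Section~\ref{sec:Teichgen}. Over the open part $B = \bB \setminus \Delta$ the fibers are smooth and the curve lies in the open stratum, so $C \subset \pom(\mu)$. Over $\moduli$ the closed set $\pobarm(\mu')$ meets the locus of smooth curves only in the union of strata $\pom(\mu'')$ with $\mu''$ a (possibly trivial) degeneration of $\mu'$; every such $\mu''$ is strictly coarser than $\mu$, so none of them equals $\mu$, and hence $C \cap \pobarm(\mu') = \emptyset$ over $\moduli$. Because the points of $\Delta$ carry singular fibers, the added points $\bC \setminus C$ all lie over $\partial\barmoduli$, whereas $\pom(\mu')$ lies over $\moduli$; therefore any point of $\bC \cap \pobarm(\mu')$ must occur over a cusp $b_0 \in \Delta$, i.e. be a boundary point $(X_0,[\omega_0])$ of $\bC$ with $X_0$ a stable nodal curve and $\omega_0 \in H^0(X_0,\omega_{X_0})$ the limiting stable one-form.

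The heart of the argument is to show $(X_0,[\omega_0]) \notin \pobarm(\mu')$, for which I would first establish a separation property for the stable limit. In the smooth model the zero sections $S_i$ can be arranged to be disjoint, so their specializations $\sigma_1(b_0),\dots,\sigma_k(b_0)$ are $k$ distinct points of $X_0$; what needs more care is that they lie on the smooth locus, away from the nodes, and that $\omega_0$ does not vanish identically on the components carrying them. This is where the flat geometry enters: the cusp corresponds to a completely periodic direction of the generating flat surface, in which it decomposes into cylinders whose core curves are the curves pinched to form the nodes of $X_0$, while the zeros are the cone points on the saddle connections bounding those cylinders, at fixed positive flat distance from one another and from the pinched cores. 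Granting this, $\div(\omega_0)$ restricted to the smooth locus equals $\sum_{i=1}^k m_i\,\sigma_i(b_0)$, with $k$ distinct points of orders matching $\mu$ and no vanishing accumulating at the nodes.

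With the separation in hand the contradiction is short. If $(X_0,[\omega_0])$ were a limit of a family $(Y_s,\eta_s) \in \pom(\mu')$, the zero divisor of $\eta_s$ would carry a point of multiplicity $m_i+m_j$, which as $s\to 0$ must either specialize to a smooth point of multiplicity $\geq m_i+m_j$ in $\div(\omega_0)$ — impossible, since on the smooth locus $\div(\omega_0)$ consists of $k$ distinct points of orders $m_\ell$ realizing $\mu$ rather than $\mu'$ — or else run into a node of $X_0$, which would force $\omega_0$ to vanish to high order there, contradicting the separation just established. To make the two cases watertight I would pass to the normalization of $X_0$, where $\omega_0$ restricts to a nonzero meromorphic form with at worst simple poles of opposite residues at the node preimages, and control the $h^0$ of the relevant twists by Clifford's theorem (Theorem~\ref{thm:clifford}) together with the connectivity of the dual graph (Proposition~\ref{prop:dualgraph}), so as to rule out $\omega_0$ degenerating on a component. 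The main obstacle is precisely this boundary bookkeeping: proving that the zeros of the stable limit of a \teichm curve stay distinct and bounded away from the nodes, and tracking how an order $m_i+m_j$ zero could attempt to hide at a node, is exactly the ``extra care with stable nodal curves'' flagged in the introduction, and it is the step where the individual geometry of each stratum genuinely enters.
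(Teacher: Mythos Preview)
Your core argument is the same as the paper's: the cusps of a \teichm curve arise from a completely periodic direction, the stable limit is obtained by pinching the core curves of the cylinders, and the zeros sit on the saddle connections bounding those cylinders, hence away from the pinched cores. The paper simply cites Masur's description for this and stops there; once the zeros of $\omega_0$ are $k$ distinct smooth points with the original multiplicities $m_1,\dots,m_k$, the point $(X_0,[\omega_0])$ cannot lie in $\pobarm(\mu')$, since under a limit of differentials with zero type $\mu'$ zeros can only merge further or escape to nodes, never split.

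Where you diverge from the paper is in the last paragraph, and that detour is unnecessary. You flag ``ruling out $\omega_0$ degenerating on a component'' as a main obstacle and propose to attack it with Clifford's theorem and the dual-graph criterion of Proposition~\ref{prop:dualgraph}. Neither is needed: the flat picture already shows that on each component the limiting form has simple poles at the node preimages with nonzero residues (equal to the circumferences of the pinched cylinders), so it is nowhere identically zero, and the zeros do not accumulate at the nodes. Likewise your two-case contradiction (high-order zero at a smooth point vs.\ zero hiding at a node) is fine but more elaborate than required; the separation statement you establish via the cylinder description is the entire content of the proof. Drop the Clifford/dual-graph machinery and the uncertainty in your final paragraph, and what remains is exactly the paper's argument.
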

\par
\begin{proof}
The claim is obvious over the interior of the moduli space. We only need to check
the disjointness over the boundary. The cusps of \teichm curves are obtained by applying
the \teichm geodesic flow ${\rm diag}(e^t,e^{-t})$
to the direction of the flat surface $(X,\omega)$ in which
$(X,\omega)$ decomposes completely into cylinders. The
stable surface at the cusp is obtained by 'squeezing' the core curves of 
these cylinders. This follows from the explicit description
in \cite{masur75}. Since the zeros of $\omega$ are located
away from the core curves of the cylinders, the claim follows.
\end{proof}
\par
For a nodal curve, a node is called \emph{separating} if removing it disconnects the curve. 
\begin{cor} \label{cor:zerosatinf}
The section $\omega$ of the canonical bundle of each smooth
fiber over a \teichm curve $C$ extends to a section $\omega_\infty$ of the dualizing sheaf 
for each singular fiber $X_\infty$ over the closure of a \teichm
curve. The signature of zeros of $\omega_\infty$ is the same
as that of $\omega$. Moreover, $X_\infty$ does not have separating nodes. In particular, $\bC$ does not intersect $\delta_i$ for $i > 0$. 
\end{cor}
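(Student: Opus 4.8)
The section $\omega$ extends to a section $\omega_\infty$ of the dualizing sheaf of each singular fiber $X_\infty$, with the same signature of zeros; moreover $X_\infty$ has no separating nodes, and consequently $\bC$ does not meet $\delta_i$ for $i>0$.

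Let me sketch how I'd prove this.

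The extension of $\omega$ is essentially built into the setup: the bundle of holomorphic one-forms extends over $\barmoduli$ as the bundle of sections of the dualizing sheaf $\omega_X$ (stated in Section~\ref{sec:backmoduli}). So the closure $\bC$ inside $\pobarm(\mu)$ carries a limiting section $\omega_\infty \in H^0(X_\infty, \omega_{X_\infty})$. The claim that the signature of zeros is preserved should follow directly from Proposition~\ref{prop:Teichbddisj}: if the order of vanishing at some zero jumped, or two zeros merged, the limit flat surface would lie in a stratum $\pobarm(\mu')$ for a proper degeneration $\mu'$ of $\mu$ — exactly what Proposition~\ref{prop:Teichbddisj} forbids. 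I'd also invoke the explicit cusp description from the proof of that proposition: at a cusp the flat surface decomposes into cylinders whose core curves get squeezed to nodes, and the zeros of $\omega$ sit away from these core curves, so each zero survives on a component of $X_\infty$ with its order intact.

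The geometric heart is the "no separating nodes" claim. Here's how I'd organize it. A separating node would split $X_\infty$ into two pieces $Y_1$ and $Y_2$ meeting only at that node, of genera $g_1, g_2$ with $g_1+g_2=g$ and $g_1,g_2\geq 1$ (otherwise one side would have a rational component violating stability or would not actually be separating in genus terms). Restricting $\omega_\infty$ to each side gives a section of the dualizing sheaf. The key point is a residue/compatibility obstruction at the node: at a node, $\omega_{X_\infty}$ is the sheaf whose sections are one-forms with at worst simple poles at the node on each branch and opposite residues. For a separating node, the residue on each branch must vanish — a holomorphic $1$-form on each $Y_i$ cannot have a pole elsewhere, and the residue theorem on each component forces the residue at the single node to be zero. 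I would then derive a contradiction with the cylinder-squeezing picture: when cylinders are squeezed, the surviving stable one-form has nonzero residue across each new node (the node comes from collapsing a cylinder, and the flat structure forces a genuine simple-pole-with-residue behavior), so the nodes produced this way are all non-separating. Alternatively, and perhaps more cleanly, I'd argue that $\omega_\infty$ is nonzero on every component (its zeros have fixed total order $2g-2$ matching the signature $\mu$), so if $Y_i$ is a genuine positive-genus piece the restriction $\omega_\infty|_{Y_i}$ is a nonzero holomorphic form there; counting zeros on each side and using that a separating node contributes no residue leads to an Euler-characteristic/degree mismatch with the prescribed signature.

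The final sentence — $\bC \cap \delta_i = \varnothing$ for $i>0$ — is then immediate: the generic point of $\delta_i$ for $i\geq 1$ is a curve with a separating node joining genus $i$ and genus $g-i$ components, and more generally every point of $\delta_i$ ($i\geq 1$) carries a separating node; since $X_\infty$ has none, $\bC$ avoids all of $\delta_i$, $i>0$.

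The main obstacle I anticipate is making the residue argument fully rigorous for arbitrary stable limits rather than the generic two-component picture. A squeezed cylinder produces a node across which $\omega_\infty$ has a genuine nonzero residue, and I need to argue carefully that \emph{no} node arising in the limit can be separating — including configurations with many components and nodes. The clean way is: every node of $X_\infty$ arises from a collapsed cylinder, across such a node the two residues are nonzero and opposite, but a separating node must have zero residue on each branch by the residue theorem applied to the holomorphic form on each side; these two facts are incompatible, so $X_\infty$ has no separating node. I would lean on the explicit description in \cite{masur75} to justify the nonvanishing of the residue at each squeezed cylinder, which is the one input that needs to be handled with care rather than asserted.
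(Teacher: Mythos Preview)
Your proposal is correct and matches the paper's approach. The paper is terser: it phrases the no-separating-node claim as the ``topological fact that a core curve of a cylinder can never disconnect a flat surface'' (since its $\omega$-period is the nonzero circumference, the core curve is not null-homologous), which is exactly the pre-limit version of your residue argument.
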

\par
\begin{proof}
The first statement follows from the description in the preceding proof. 
The fact that $X_\infty$ does not have separating nodes is a consequence of the topological fact that
a core curve of a cylinder can never disconnect a flat surface. It implies that $\bC$ does not intersect the boundary divisors $\delta_i$ for $i>0$ on $\barmoduli$, because by definition
a curve parameterized in $\delta_i$ for $i>0$ possesses at least one separating node. 
\end{proof}
\par
\begin{cor} \label{cor:degfibers}
For \teichm curves generated by a flat surface in
$\omoduli[g](2g-2)$ the degenerate fibers are irreducible.
\par
For \teichm curves generated by a flat surface in
$\omoduli[g](k_1,k_2)$, with $k_1 \geq k_2$ both odd, the 
degenerate fibers are irreducible or consist of two components of genus $g_i$ for $i= 1, 2$ 
joined at $n$ nodes for an odd number $n$ such that $2g_i - 2 + n = k_i$.
\end{cor}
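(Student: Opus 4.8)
The plan is to analyze the stable limit $(X_\infty,\omega_\infty)$ at a cusp of $\bC$ directly, using everything already recorded in Corollary~\ref{cor:zerosatinf}: the signature of zeros of $\omega_\infty$ equals that of $\omega$, and $X_\infty$ has no separating nodes. The one extra input I would extract from the cylinder-pinching description in the proof of Proposition~\ref{prop:Teichbddisj} is that $\omega_\infty$ has a genuine simple pole (nonzero residue) at \emph{every} branch of \emph{every} node. Indeed each node is a pinched core curve of a cylinder in the complete cylinder decomposition, and the residue there equals a nonzero multiple of the circumference of that cylinder, i.e.\ of the period of $\omega$ around the vanishing cycle, which is nonzero. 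This is exactly what makes the degree bookkeeping rigid, and I expect its careful justification --- checking that no node degenerates to a point of holomorphicity after one rescales $\omega$ to obtain a nonzero stable limit, since in a complete cylinder decomposition all residues scale together --- to be the main obstacle.

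Granting this, the core computation is a degree count on each irreducible component. Let $Z$ be a component, $\tilde Z\to Z$ its normalization of genus $h$, and let $\delta_Z$ be the number of branches of nodes lying on $\tilde Z$, a self-node contributing two. Since $\omega_\infty|_{\tilde Z}$ is a nonzero meromorphic differential whose only zeros are the prescribed ones (of total order $a_Z$, located away from the nodes by signature preservation) and whose only poles are simple poles at all $\delta_Z$ node-branches, comparison with $\deg K_{\tilde Z}=2h-2$ yields
$$ a_Z-\delta_Z=2h-2. $$
If $Z$ carries no zero then $a_Z=0$ forces $\delta_Z=2-2h\geq 1$, hence $h=0$ and $\delta_Z=2$, so $\tilde Z\cong\PP^1$ with exactly two node-branches. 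This is incompatible with $X_\infty$ being a connected stable curve in $\barmoduli$: a rational component needs at least three nodes, while the two branches cannot instead form a single self-node without disconnecting $X_\infty$. Therefore every component of $X_\infty$ must contain at least one zero of $\omega_\infty$.

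For the stratum $\omoduli[g](2g-2)$ there is a single zero, so there can be at most one component, i.e.\ $X_\infty$ is irreducible; this settles the first statement. For $\omoduli[g](k_1,k_2)$ there are two zeros, so $X_\infty$ has at most two components. If $X_\infty$ is reducible it then has exactly two components $Z_1,Z_2$, and since neither may be zero-free each contains exactly one zero, say of order $k_i$ on $Z_i$. Writing $g_i$ for the \emph{arithmetic} genus of $Z_i$, $n$ for the number of nodes joining $Z_1$ and $Z_2$, and $s_i$ for the number of self-nodes of $Z_i$, I would substitute $g_i=h_i+s_i$ and $\delta_{Z_i}=n+2s_i$ into the count above; the self-node terms cancel and one is left with
$$ 2g_i-2+n=k_i. $$
The parity statement is then immediate: since each $k_i$ is odd and $k_i=2g_i-2+n$, the integer $n$ is odd.

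I expect the structural dichotomy of the second statement to drop out cleanly once the nonzero-residue input and the stability-based elimination of zero-free components are in place; the genuine work lies in that residue input. A secondary point I would record is that self-nodes inside a component are harmless: a self-node is never a separating node (a loop is never a bridge), so they are consistent with Corollary~\ref{cor:zerosatinf}, and the arithmetic-genus bookkeeping above shows the relation $2g_i-2+n=k_i$ is insensitive to their presence. As a consistency check one verifies $g_1+g_2+n-1=g$, matching the arithmetic genus of a two-component curve meeting in $n$ nodes.
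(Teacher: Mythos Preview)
Your argument is correct and follows the same strategy as the paper's: a degree count on each irreducible component forces every component to carry at least one zero of $\omega_\infty$, whence the number of components is bounded by the number of zeros, and the relations $2g_i-2+n=k_i$ drop out. The paper's execution is more direct, however. Rather than passing to the normalization and invoking nonzero residues at the node branches, it works with $\omega_\infty$ as a section of the dualizing sheaf $\omega_X$ on the nodal curve itself: Corollary~\ref{cor:zerosatinf} already gives $\div(\omega_\infty)=\sum m_i p_i$ supported at smooth points, and stability makes $\deg(\omega_X|_Z)=2g_Z-2+\delta_Z>0$ on every component $Z$ (with $g_Z$ the \emph{arithmetic} genus and $\delta_Z=Z\cdot Z^c$), so one reads off immediately that each $Z$ carries a zero. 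The residue input you flag as ``the main obstacle'' is thus bypassed entirely, and your self-node bookkeeping becomes unnecessary since self-nodes are already absorbed into the arithmetic genus $g_Z$. Both routes reach the same relation, but the dualizing-sheaf formulation is the cleaner packaging.
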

\begin{proof}
Let $X$ be a degenerate fiber and $Z$ a component of $X$. The dualizing sheaf of $X$ restricted to $Z$ has positive degree 
equal to $2g_{Z} - 2 + \delta_Z$, where $\delta_Z$ is the intersection number of $Z$ with its complement in $X$. For the case $\omoduli[g](2g-2)$, by Corollary~\ref{cor:zerosatinf} it implies that $X$ only has one component, hence it is irreducible. For the case 
$\omoduli[g](k_1,k_2)$, it implies that $X$ is either irreducible or has two components $Z_1, Z_2$. For the latter 
suppose $Z_i$ contains the $k_i$-fold zero. By assumption $2g_{i} - 2 + \delta_{Z_i} = k_i$ is odd, hence $\delta_{Z_1} = \delta_{Z_2} = n$ is also odd. 
\end{proof}
\par
\begin{prop} \label{prop:Teichhypdisj}
Let $C$ be a \teichm curve generated by
a flat surface $(X,\omega)$ in $\omoduli[g](\mu)$.
Suppose an irreducible degenerate fiber $X_\infty$ over a cusp
of $C$ is hyperelliptic. Then $X$ is hyperelliptic, hence
the whole \teichm curve lies in the locus of hyperelliptic
flat surfaces. 
\par
Moreover, if $\mu \in \{(4),(3,1),(6),(5,1),(3,3),(3,2,1),(8),(5,3)\}$ and $(X,\omega)$
is not hyperelliptic, then no degenerate fiber of
the \teichm curve is hyperelliptic.  
\end{prop}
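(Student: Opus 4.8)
The plan is to separate the \emph{interior} of the Teichm\"uller curve, where the statement is essentially formal, from the \emph{boundary}, which carries the real content.

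For the first assertion I would first record that the locus of hyperelliptic flat surfaces inside a stratum is closed and $\SL_2(\RR)$-invariant: if $(X,\omega)$ admits an involution $\tau$ with $\tau^*\omega=-\omega$ and $X/\tau\cong\PP^1$, then for every $A\in\SL_2(\RR)$ the same $\tau$ is an affine involution of $A\cdot(X,\omega)$ with derivative $-\Id$ and rational quotient. As a \teichm curve is the projection of a single $\SL_2(\RR)$-orbit, either all of its smooth fibers are hyperelliptic or none are; so it suffices to propagate the hyperelliptic structure from the cusp whose fiber is $X_\infty$ into the interior. By Corollary~\ref{cor:zerosatinf} the form $\omega_\infty$ is an honest limit of $\omega$ with the same signature, and since $X_\infty$ is irreducible and hyperelliptic its involution $\tau_\infty$ acts by $-1$ on sections of the dualizing sheaf, i.e. $\tau_\infty^*\omega_\infty=-\omega_\infty$, exactly as in the smooth case. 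I would then invoke the description of the cusp used in the proof of Proposition~\ref{prop:Teichbddisj}: $X_\infty$ is obtained by collapsing the cylinders of the completely periodic direction of $(X,\omega)$. Because $\tau_\infty$ reverses $\omega_\infty$ it fixes that direction and hence permutes the cylinders, so the pinching is $\tau_\infty$-equivariant and $\tau_\infty$ deforms to an affine involution with derivative $-\Id$ and rational quotient on every nearby smooth fiber. These fibers are hyperelliptic, and by the invariance above so is $(X,\omega)$.

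Turning to the second assertion, the contrapositive of the first immediately disposes of every \emph{irreducible} hyperelliptic degenerate fiber, since such a fiber would force $(X,\omega)$ to be hyperelliptic. For the single-zero strata $(4),(6),(8)$ Corollary~\ref{cor:degfibers} shows that all degenerate fibers are irreducible, completing those cases. It remains to rule out \emph{reducible} hyperelliptic degenerate fibers in the two-zero strata $(3,1),(5,1),(3,3),(5,3)$ and in $(3,2,1)$. For this I would run a finite case analysis over the admissible reducible configurations. By Corollary~\ref{cor:zerosatinf} there are no separating nodes; for the two-zero strata Corollary~\ref{cor:degfibers} restricts a reducible fiber to two components $Z_1,Z_2$ meeting in an odd number $n\geq3$ of nodes with $2g(Z_i)-2+n=k_i$, while for $(3,2,1)$ one enumerates the component-and-zero distributions directly from $\deg(\omega_\infty|_Z)=2g(Z)-2+\delta_Z$ and the fixed signature. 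Assume such a fiber lies in $H$, so $X_\infty$ carries an involution $\tau$ with $\tau^*\omega_\infty=-\omega_\infty$ whose quotient is a tree of $\PP^1$'s. Then $\tau$ preserves the zero divisor of $\omega_\infty$; since the zero orders are pairwise distinct in every one of these strata except $(3,3)$, $\tau$ must fix each zero and therefore preserve each component. The quotient being a tree of $\PP^1$'s forces each $Z_i/\tau$ to be rational, but the images of the node-orbits then glue these rational components at $\geq2$ points (an involution of an odd set of $\geq3$ nodes has at least two orbits), producing arithmetic genus $\geq1$ --- a contradiction. The only remaining possibility, realized in $(3,3)$, is that $\tau$ interchanges the two genus-one components; this identifies them and leaves a quotient of arithmetic genus $\geq1$, again excluded.

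The main obstacle is this boundary bookkeeping rather than the interior. Two points require genuine care: in the first assertion, verifying that the pinching realizing the cusp is compatible with $\tau_\infty$ so that the involution truly deforms to the smooth fibers; and in the second, carrying the genus count through the admissible-cover formalism, where the cover $\widetilde{X}_\infty$ stabilizing to $X_\infty$ may acquire rational bridges over the nodes of the genus-zero base and where the component-swapping case must be isolated. Proposition~\ref{prop:dualgraph} serves as a useful consistency check here, since the two-connected dual graphs of the configurations above are exactly those for which $|\omega_{X_\infty}|$ is base-point-free, yet the node count shows that the extra hyperelliptic symmetry cannot be present.
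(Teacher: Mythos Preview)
Your first assertion is argued along the same lines as the paper: the hyperelliptic involution on the irreducible cusp fiber acts by $-\Id$ in flat coordinates and is compatible with the cylinder pinching, hence persists to the nearby smooth fibers. The paper is more explicit about the flat model (a compact core $X_0$ with half--infinite cylinders, and a count of $2h+2+2(g-h)=2g+2$ fixed points after opening up), but your sketch captures the mechanism.

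For the second assertion your route diverges from the paper, and there is a genuine gap. The paper's device is \emph{local}: since $\tau^*\omega_\infty=-\omega_\infty$ and $\tau^2=\id$, a fixed zero of order $m$ forces $c^{m+1}=-1$ with $c^2=1$, which is impossible for $m$ odd. Thus $\tau$ can never fix a component carrying an odd-order zero, and this single observation disposes of all the component-preserving cases and most of the component-swapping ones. Your argument is instead \emph{global}: you try to show that the quotient of the reducible fiber cannot be a tree of $\PP^1$'s by counting node orbits. This is elegant when it applies, but it does not cover everything you need. Two concrete failures: in $\omoduli[4](3,3)$ the two-component degenerations can have $n=5$ nodes with $g(Z_1)=g(Z_2)=0$, not only the $n=3$ genus-one case you name; when $\tau$ swaps these two $\PP^1$'s and fixes every node, the quotient is a single smooth $\PP^1$ of genus $0$, so your ``arithmetic genus $\geq 1$'' contradiction evaporates. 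And for $\mu=(3,2,1)$ the node number between two components need not be odd (e.g.\ $Z_1$ carrying the zeros of orders $3$ and $1$, $Z_2$ the zero of order $2$, meeting at $n=2$ nodes), so ``an involution of an odd set of $\geq 3$ nodes has at least two orbits'' is unavailable and a single node orbit yields a genuine tree.

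The $n=5$ swap in $(3,3)$ is not just a bookkeeping omission: that configuration \emph{is} an admissible hyperelliptic cover (ten Weierstrass points, two on each of the five bridges), and no quotient-genus argument will exclude it. The paper handles it exactly as in the irreducible case, by opening up the cylinders and showing the smooth fiber is hyperelliptic; this is the step your approach cannot replace. The cleanest repair is to import the odd-order-zero obstruction: once you know $\tau$ cannot fix an odd zero, the $(3,1),(5,1),(5,3),(3,2,1)$ reducible cases fall immediately, and for $(3,3)$ the only surviving possibility is the swap with all cylinders paired at the same node, which feeds back into your first assertion.
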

\par
The last conclusion does not hold for all strata. For instance, 
\teichm curves generated by a non-hyperelliptic flat surface 
in the stratum $\omoduli[3](2,1,1)$ always intersect
the hyperelliptic locus at the boundary, as we will see
later in the discussion for that stratum.
\par
As motivation for the proof, recall why a \teichm curve
generated by $(X,\omega)$ with $X$ hyperelliptic stays
within the corresponding locus of hyperelliptic flat surfaces.
The hyperelliptic
involution acts as $(-1)$ on all one-forms, hence on
$\omega$. In the flat coordinates of $X$ given by 
Re$(\omega)$ and $\Im(\omega)$, the hyperelliptic involution
acts by the matrix $-\Id$. The \teichm curve
is the $\SL(2,\RR)$-orbit of $(X,\omega)$ and $-\Id$
is in the center of $\SL(2,\RR)$. So if $(X,\omega)$
admits a hyperelliptic involution, so does
$A\cdot(X,\omega)$ for any $A\in \SL(2,\RR)$.
\par
\begin{proof}
Suppose the stable model $X_\infty$ of the degenerate fiber is irreducible 
of geometric genus $h$ with $(g-h)$ pairs of points $(p_i,q_i)$ identified. 
This stable curve $X_\infty$ being hyperelliptic means that there exists
a semi-stable curve birational to $X_\infty$ that admits a degree two admissible cover
of the projective line. In terms of admissible covers, this is yet equivalent to require that 
the normalization $X_n$ of $X_\infty$ is branched at $2h+2$ branch points over a main 
component (i.e.\ the image of the unique component not contracted under that passage to the stable model) with covering
group generated by an involution $\phi$ and, moreover, for each of the
$2(g-h)$ nodes there is a projective line intersecting $X_n$ in $p_i$ 
and $q_i=\phi(p_i)$  with two branch points.
\par
In the flat coordinates of $X_n$ given by $\omega$, the surface
consists of a compact surface $X_0$ with boundary of genus $h$ and 
$2(g-h)$ half-infinite cylinders (corresponding to the nodes) attached to 
the boundary of $X_0$. We may define $X_0$ canonically, by 
sweeping out the half-infinite cylinder at $p_i$ (or $q_i$) with lines
of slope equal to the residue (considered as element in $\RR^2$) of $\omega$
at $p_i$ until such a line hits a zero of $\omega$, i.e.\ a
singularity of the flat structure.
\par
With this normalization, the above discussion shows that
for irreducible stable curves the hyperelliptic involution exchanges
the half-infinite cylinders corresponding to $p_i$ and $q_i$ and
it defines an involution $\phi$ of $X_0$. As in the smooth case, 
$\phi$ acts as $-\Id$ on $X_0$.
\par
To obtain smooth fibers over the \teichm curve (in 
a neighborhood of $X_\infty$) one has to glue cylinders
of finite (large) height in place of the half-infinite
cylinders of appropriate ratios of moduli. The hypothesis
on $\phi$ acting on $X_0$ and on the half-infinite
cylinders implies that $\phi$ is a well-defined involution
on the smooth curves. Moreover, $\phi$ has two fixed
points in each of the finite cylinders and $2h+2$ fixed
points on $X_0$, making $2g+2$ fixed points in total.
This shows that the smooth fibers of the
\teichm curve are hyperelliptic.
\par
To complete the proof we have to consider the two-component
degenerations for $\mu \in \{(3,1),(5,1),(5,3)\}$ by Corollary~\ref{cor:degfibers}. In all these cases, 
the hyperelliptic involutions can neither exchange the
components (since the zeros are of different order) nor
fix the components (since the zeros are of odd order).
\par
For $\mu = (3,3)$ a hyperelliptic involution $\phi$ cannot fix the component, since
$3$ is odd. It cannot exchange the two components and exchange a
pair of half-infinite cylinders that belong to different nodes, 
since $\phi$ could then be used to define a non-trivial involution
for each component. This involution fixes the zeros and
this contradicts that $3$ is odd. If $\phi$ exchanges all
pairs of  half-infinite cylinders that belong to the same node, 
$\phi$ has two fixed points in each cylinder on the smooth
'opened up' surface. Now we can apply the same argument as
in the irreducible case to conclude that the 'opened up' 
flat surfaces are hyperelliptic as well.
\par
For $\mu = (3,2,1)$ a hyperelliptic involution can neither fix the
component with the (unique) zero of order three, since 3 is
odd, nor map it elsewhere, since the zeros are of different order.
\end{proof}
\par

\section{Lyapunov exponents, Siegel-Veech constants and slopes} \label{sec:backLyap}

\subsection{Lyapunov exponents}

Fix an $\SL_2(\RR)$-invariant, ergodic measure $m$ on $\omoduli$.
The Lyapunov exponents for the \teichm geodesic flow
on $\omoduli$ measure the logarithm of the growth rate of the 
Hodge norm of cohomology classes during parallel transport along 
the geodesic flow. More precisely, let $V$ be the restriction
of the real Hodge bundle (i.e.\ the bundle with fibers $H^1(X,\RR)$)
to the support $M$ of $m$. 
Let $S_t$ be the lift of the geodesic
flow to $V$ via the Gauss-Manin connection. Then Oseledec's theorem 
shows the existence of a filtration
$$V = V_{\lambda_1} \supset \cdots \supset V_{\lambda_k} \supset 0$$
by measurable vector subbundles with the property that, for almost all $p \in M$ 
and all $v \in V_p \setminus \{0\}$, one has
$$||S_t(v)|| = {\rm exp}(\lambda_i t + o(t)),$$
where $i$ is the maximum value such that $v$ is in the fiber of $V_i$
over $p$, i.e.\ $v\in (V_i)_p$. 
The numbers $\lambda_i$ for $i=1,\ldots,k\leq {\rm rank}(V)$
are called  the {\em  Lyapunov exponents of $S_t$}. Note
that these exponents are unchanged if we replace
the support of $m$ by a finite unramified covering with
a lift of the flow and the pullback of $V$. We adopt the convention
to repeat the exponents according to the
rank of $V_i/V_{i+1}$ such that we will always
have $2g$ of them, possibly some of them equal.
Since $V$ is symplectic, the spectrum is symmetric, i.e.\
$\lambda_{g+k} = - \lambda_{g-k+1}$. 
The reader may consult \cite{forni06} or \cite{zorich06} for a more 
detailed introduction to this subject. 
\par
Most of our results will be about the {\em sum} of Lyapunov exponents defined as 
$$L = \sum_{i=1}^g \lambda_i.$$
This sum depends, of course, on the measure $m$ chosen and we occasionally write $L(m)$ to emphasize
this dependence. In particular, one defines Lyapunov exponents for an $\SL_2(\RR)$-invariant suborbifold 
of $\omoduli$ carrying such a measure $m$. We will focus on the case of a \teichm curve $C$. Consequently, we 
use $L(C)$ to denote the {\em sum} of its Lyapunov exponents. 
\par
The bridge between the 'dynamical' definition of Lyapunov exponents
and the 'algebraic' method applied in the sequel is given by the
following result. Note that if the VHS splits into direct summands
one can apply Oseledec's theorem to the summands individually. The
full set of Lyapunov exponents is the union (with multiplicity) 
of the Lyapunov exponents of the summands.
\par
\begin{theorem} [\cite{kontsevich}, \cite{kontsevichzorich}, \cite{bouwmoel}] \label{thm:Lyapviadeg}
If the VHS over the \teichm curve contains a sub-VHS $\WW$ 
of rank $2k$, then the sum of the $k$ corresponding non-negative
Lyapunov exponents equals 
$$ \sum_{i=1}^k \lambda_i^{\WW} = \frac{2 \deg \WW^{(1,0)}}{2g(\bB)-2 + |\Delta|} ,$$
where $\WW^{(1,0)}$ is the $(1,0)$-part of the Hodge-filtration of the
vector bundle associated with $\WW$. In particular, we have 
$$ \sum_{i=1}^g \lambda_i =  \frac{2 \deg f_* \omega_{\bcX/\bB}}{2g(\bB)-2 + |\Delta|}. $$
\end{theorem}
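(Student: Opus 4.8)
The plan is to bridge the dynamical definition of the $\lambda_i^\WW$ with the algebraic degree $\deg\WW^{(1,0)}$ by way of Oseledec's theorem together with a curvature computation for the Hodge metric, following the analytic arguments of Kontsevich and Forni and their adaptation to \teichm curves. The guiding principle is that the Lyapunov exponents are the exponential growth rates of the Hodge norm under the geodesic flow, and that these rates can be read off from the degree of the holomorphic Hodge subbundle once one averages against the hyperbolic measure.

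First I would reduce the statement to the growth of a single determinant line. Applying Oseledec's multiplicative ergodic theorem to the Kontsevich--Zorich cocycle induced on $\bigwedge^k \WW$, the quantity $\sum_{i=1}^k \lambda_i^\WW$ is the top exponent of this exterior cocycle, and hence measures the exponential growth rate of the Hodge norm of the most expanding $k$-vector. The decisive point is that, under the flow ${\rm diag}(e^t,e^{-t})$, the most expanding $k$-plane converges to the holomorphic Hodge subbundle $\WW^{(1,0)}$, so the sum is governed by the asymptotics of the Hodge norm on $\det\WW^{(1,0)}$.

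Second, I would compute this growth rate infinitesimally. Forni's variational formula expresses the derivative of the logarithm of the Hodge norm along the flow through the second fundamental form of $\WW^{(1,0)}\subset\WW$, i.e.\ the Higgs (Kodaira--Spencer) field. Integrating over $C=\mathbb H/\SL(X,\omega)$ against the normalized hyperbolic area measure and invoking Chern--Weil, the average of this curvature term is identified with $2\pi\deg\WW^{(1,0)}$ up to the standard normalization, while Gauss--Bonnet gives the total hyperbolic area $2\pi(2g(\bB)-2+|\Delta|)=-2\pi\chi(C)$. Dividing the two yields the asserted quotient; the factor $2$ is exactly the one that already forces $\lambda_1^\LL=1$ for the uniformizing local system $\LL$, for which the maximal Higgs field gives $\deg\LL^{(1,0)}=\tfrac12\deg\Omega^1_{\bB}(\log\Delta)=\tfrac12(2g(\bB)-2+|\Delta|)$. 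The final assertion is then the special case $\WW=R^1f_*\CC$, where $\WW^{(1,0)}=f_*\omega_{\bcX/\bB}$.

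The main obstacle is the behavior at the cusps $\Delta$. The Hodge metric degenerates at the nodal fibers, so the naive curvature integral differs a priori from the holomorphic degree by boundary contributions, and one must make algebraic sense of $\deg\WW^{(1,0)}$ in the first place. This is precisely why we passed to the finite cover $\bB\to C$ on which the monodromy is unipotent: it guarantees that the Deligne extension computes the correct (parabolic) degree and that the error terms coming from the logarithmic poles of the connection are controlled, so that the curvature integral matches $\deg\WW^{(1,0)}$ on the nose. Verifying this matching---the integrability of the growth integrand near the cusps and the vanishing of the boundary terms---is the technical heart of the argument, and is where the unipotence arranged in the construction of $f:\bcX\to\bB$ is indispensable.
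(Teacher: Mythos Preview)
The paper does not give its own proof of this theorem: it is quoted from the literature with attributions to \cite{kontsevich}, \cite{kontsevichzorich}, and \cite{bouwmoel}, and is used as a black box thereafter. So there is no paper-proof to compare against.

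Your sketch is a faithful outline of the Kontsevich--Forni argument as adapted to \teichm curves in the cited references: pass to the determinant line $\det\WW^{(1,0)}$, identify the exponential growth rate of its Hodge norm along the geodesic flow with the sum $\sum\lambda_i^\WW$ via Oseledec, compute this rate as a curvature integral using Forni's variational formula, and convert the integral to a degree by Chern--Weil and Gauss--Bonnet, with the unipotent monodromy guaranteeing that the Deligne extension gives the correct parabolic degree at the cusps. One small imprecision: the phrase ``the most expanding $k$-plane converges to $\WW^{(1,0)}$'' is not quite the mechanism. The Hodge subbundle $\WW^{(1,0)}$ is not flat, so it is not literally a candidate for the Oseledec filtration; rather, the sum of the top $k$ exponents equals the average logarithmic growth rate of the Hodge volume of \emph{any} generic $k$-dimensional isotropic subspace, and Kontsevich's key observation is that this average growth rate can be rewritten as the integral of the first Chern form of the Hodge metric on $\det\WW^{(1,0)}$. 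With that adjustment, your outline matches the argument in the cited sources.
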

\par

\subsection{Lyapunov exponents for loci of hyperelliptic flat surfaces}

We recall a result of \cite[Section 2.3]{ekz} that deals 
with the sum of Lyapunov exponents for \teichm curves 
generated by hyperelliptic curves, more
generally for any invariant measure on loci of hyperelliptic flat surfaces.
It implies immediately that hyperelliptic strata are non-varying.
\par
\begin{theorem}[\cite{ekz}] \label{thm:calchyp}
Suppose that $M$ is a regular $\SL_2(\RR)$-invariant suborbifold in a
locus of hyperelliptic flat surfaces of some stratum $\omoduli[g](m_1,\dots,m_k)$. 
Denote by $(d_1,\dots,d_s)$ the orders of singularities of the
underlying quadratic differentials on the quotient projective line.
\par
Then the sum of Lyapunov exponents for $M$ is 
$$
L(M)
\ = \
\cfrac{1}{4}\,\cdot\,\sum_{\substack{j \text{ such that}\\
d_j \text{ is odd}}}
\cfrac{1}{d_j+2}\,.
$$
where, as usual, we associate the order $d_i=-1$ to simple poles.
\end{theorem}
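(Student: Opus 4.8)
The plan is to apply Theorem~\ref{thm:Lyapviadeg}, which expresses the sum of Lyapunov exponents through the degree of the Hodge bundle, and then to evaluate that degree using the double-cover structure underlying the hyperelliptic flat surfaces. I would first treat the case of a \teichm curve $C \subset M$, where Theorem~\ref{thm:Lyapviadeg} applies directly; the general invariant suborbifold follows from the same computation using the measured (Forni--Kontsevich) version of the degree formula. Passing to a family $f\colon \bcX \to \bB$ (a finite cover of $C$), each smooth fiber $Y$ is a double cover $\pi\colon Y \to \PP^1$ whose quotient differential $q$ has singularities of orders $(d_1,\dots,d_s)$; let $\tau$ be the covering involution, so that $\tau^{*}\omega = -\omega$. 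Since $\PP^1$ carries no holomorphic one-forms, the $\tau$-invariant part of $H^{1,0}$ vanishes on every fiber, so the whole Hodge bundle is anti-invariant and $f_{*}\omega_{\bcX/\bB} = E^{-}$, its anti-invariant summand. Theorem~\ref{thm:Lyapviadeg} then reduces everything to computing $\deg E^{-}$, via $L(M) = 2\deg E^{-}/(2g(\bB)-2+|\Delta|)$.

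The next step is to express $E^{-}$ through the branch data of $\pi$. Near a singularity of order $d_j$ one has $q \sim z^{d_j}(dz)^2$, and the local model $w^2 = z$ shows that $\pi$ is branched there exactly when $d_j$ is odd, whereas an even $d_j$ splits into two unramified preimages; thus the branch points are precisely the odd-order singularities, and there are $2g+2$ of them by Riemann--Hurwitz. Writing $R$ for the branch divisor and $L$ for the line bundle on the relative $\PP^1$ with $L^{\otimes 2} = \OO(R)$, the splitting $\pi_{*}\omega_{Y} = \omega_{\PP^1} \oplus (\omega_{\PP^1}\otimes L)$ identifies $E^{-}$ fiberwise with the pushforward of $\omega_{\PP^1}\otimes L$; the count $\deg(\omega_{\PP^1}\otimes L) = g-1$ with $g$ global sections confirms this is the right summand. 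I would then compute $\deg E^{-}$ by Grothendieck--Riemann--Roch (equivalently, a relative Noether-type calculation) on the total space of the relative $\PP^1$, where $R = \sum_{d_j\ \mathrm{odd}}\sigma_j$ for sections $\sigma_j$ tracking the odd singularities, so that $\deg E^{-}$ is governed by the self- and mutual intersections of the $\sigma_j$.

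The heart of the matter, and the step I expect to be the main obstacle, is the local analysis at the branch points together with the bookkeeping across the degenerate fibers at the cusps of $C$. A singularity of order $d_j$ has cone angle $(d_j+2)\pi$ on the quotient, and this is exactly the datum that should control the fractional (orbifold, or Deligne-extension) contribution of $\sigma_j$ to the degree of $L$: I expect each odd singularity to contribute a term proportional to $1/(d_j+2)$, while even singularities contribute nothing since they do not meet $R$. The delicate point is to show that, after dividing by the normalizing factor $2g(\bB)-2+|\Delta|$, these contributions combine into a constant depending only on the orders $(d_1,\dots,d_s)$ and not on the chosen family---this proportionality is exactly what forces the sum to be non-varying on the whole hyperelliptic locus. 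Controlling how the sections $\sigma_j$ collide at the cusps and matching their contribution against $|\Delta|$ is what makes this bookkeeping subtle; carrying it out should pin the universal constant to $\tfrac14$, yielding $L(M) = \tfrac14\sum_{d_j\ \mathrm{odd}} 1/(d_j+2)$.
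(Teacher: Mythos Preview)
The paper does not prove this theorem at all: it is quoted from \cite{ekz} (their Section~2.3) and used as a black box, so there is no ``paper's own proof'' to compare against. Your sketch is therefore an independent attempt at the EKZ result.

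Your framework is sound. Reducing via Theorem~\ref{thm:Lyapviadeg} to $2\deg E^{-}/\chi$, identifying $E^{-}$ with $g_{*}(\omega_{P/\bB}\otimes L)$ on the relative $\PP^1$, and predicting the factor $1/(d_j+2)$ from the section geometry are all correct. Indeed the quadratic-differential analogue of Proposition~\ref{prop:intomega} gives $\sigma_j^2 = -\chi/(d_j+2)$ for the section tracking a singularity of order $d_j$, which is exactly the source of those denominators.

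The gap is that the boundary analysis you flag as ``delicate bookkeeping'' is a genuine obstacle, not just bookkeeping. If one runs GRR naively on the relative $\PP^1$ assuming disjoint sections and smooth fibers, the answer comes out wrong: one picks up an extra term $-\tfrac{1}{6}+\tfrac{s}{12}-\tfrac{1}{6}\sum_{j}\tfrac{1}{d_j+2}$ involving \emph{all} singularities (even-order included), and in examples such as $\cQ(-1^5,1)$ this does not vanish. What is missing is the contribution of the nodal fibers of $P\to\bB$ over the cusps (equivalently, the correction to $c_1(\omega_{P/\bB})^2$ coming from $\delta_P$ via Noether's formula $12\cdot 0 = \omega_{P/\bB}^2 + \delta_P$), together with the correct extension of the square-root bundle $L$ across those fibers. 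Until you control $\delta_P$ and the limiting behaviour of $L$ in terms of the $d_j$ alone, the argument does not close; this is precisely where the work in \cite{ekz} lies, and it is not a step one can wave through.
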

\par
\begin{cor} \label{cor:hypvalues}
Hyperelliptic strata are non-varying. For a \teichm
curve $C$ generated by $(X,\omega)$ we have
\begin{equation}
\begin{aligned}
L(C) = \frac{g^2}{2g-1} \quad  \text{and} \quad s(C) = 8 + \frac{4}{g} \quad & 
\text{if} \quad (X,\omega) \in \omoduli[g]^\hyp(2g-2), \\
L(C) = \frac{g+1}{2}  \quad  \text{and} \quad s(C) =  8 + \frac{4}{g} \quad & 
\text{if} \quad (X,\omega) \in \omoduli[g]^\hyp(g-1,g-1). 
\end{aligned}
\end{equation}
\end{cor}
\par

\subsection{Siegel-Veech constants, slopes and the sum of Lyapunov
exponents}\label{sec:svsl}

Write $\mu = (m_1, \ldots, m_k)$ for a partition of $2g-2$. 
Let $c_{\mu}$ denote the {\em (area) Siegel-Veech constant} of (the connected
component of) the stratum   
$\omoduli(\mu)$. Roughly speaking, $c_{\mu}$ measures the growth rate of the weighted sum of 
cylinders of length at most $T$ on a flat surface $(X,\omega)$ in $\omoduli(\mu)$. 
The weight for each horizontal cylinder is given by its height/length. 
Similarly, one can define the {\em Siegel-Veech constant} $c(C)$ 
for a \teichm curve $C$, or more generally for any $\SL_2(\RR)$-invariant suborbifold 
in $\omoduli(\mu)$ (see \cite{ekz} and \cite{emz} for a comprehensive introduction 
to Siegel-Veech constants). 
\par
Let $\kappa_{\mu}$ be a constant 
$$\kappa_{\mu} = \frac{1}{12}\sum_{i=1}^k \frac{m_i(m_i+2)}{m_i+1}$$ 
determined by the signature of the stratum. The Siegel-Veech constant and the sum of Lyapunov 
exponents are related as follows. The condition of regularity in the
next theorem is a technical notion that holds for all known examples of invariant suborbifolds and 
is expected to hold generally (see \cite[Section 1.5]{ekz} for more details).
\par
\begin{theorem}[\cite{ekz}] \label{thm:ekz}
For any regular $\SL_2(\RR)$-invariant finite
measure $m$ on the stratum $\omoduli[g](\mu)$ we have
\begin{equation}
\label{eq:Lkappac} 
L(m) = \kappa_\mu + c(m).
\end{equation}
In particular the regularity and hence the equality hold for the measure with support equal to a connected
component of $\omoduli[g](\mu)$ and for the measure supported on a \teichm curve.
\end{theorem}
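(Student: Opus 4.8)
The plan is to follow the strategy of \cite{ekz} and derive the identity from the relation between the sum of Lyapunov exponents and the curvature of the Hodge norm. The starting point is the analytic incarnation of Theorem~\ref{thm:Lyapviadeg}. For a \teichm curve that theorem already writes $L$ as $2\deg f_*\omega_{\bcX/\bB}/(2g(\bB)-2+|\Delta|)$, and the degree on the right is the integral over $\bB$ of the curvature of the Hodge metric on $\det f_*\omega_{\bcX/\bB}$; for a general regular invariant measure $m$ the corresponding statement is Forni's formula, which expresses $L(m)$ as the $m$-average of the curvature of the Hodge norm on the top exterior power of the Hodge bundle. Thus the first task is to replace the topological degree by an honest curvature integral, which is legitimate because the Hodge metric is a smooth metric away from the locus where $h^0$ jumps, and the Chern--Weil integral computes the same degree up to the cusp contributions we must analyze anyway.

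Next I would compute this curvature pointwise on a single \teichm disc $\half\cong\SL_2(\RR)/\SO(2)$ through a generating surface $(X,\omega)$. By Forni's second-variation formula the $(1,1)$-form governing the variation of $\log\|\omega\|$ along the disc is controlled by the hyperbolic Laplacian of $\log\|\omega\|$, which can be rewritten entirely in terms of the flat geometry of $(X,\omega)$. Averaging over the circle direction $\SO(2)$ and using the $\SL_2(\RR)$-invariance of $m$, the $m$-average of this Laplacian separates into a purely local contribution concentrated at the conical singularities of the flat metric and a contribution forced by the non-compactness of $M$.

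The local term is genuinely computable. Near a zero of $\omega$ of order $m_i$ the flat metric carries a cone of angle $2\pi(m_i+1)$, and a Gauss--Bonnet-type computation at this cone point contributes exactly $\tfrac{1}{12}\,\tfrac{m_i(m_i+2)}{m_i+1}$ to the integrated curvature. Summing over the $k$ zeros reproduces $\kappa_\mu$; in effect $\kappa_\mu$ is \emph{defined} by the outcome of this local calculation, so here the work is only to carry out the model computation at a single cone point and pin down the coefficient.

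The main obstacle is the boundary term. As one follows the geodesic flow deep into a cusp of the disc, the core curve of some flat cylinder is pinched, the associated cohomology class develops a logarithmic singularity of its Hodge norm, and the curvature integral fails to converge absolutely, so it must be read as a principal value. Making this precise requires relating the contribution of short cylinders to the asymptotic counting of cylinders on a generic surface of $M$: via the Siegel--Veech transform and the ergodic equidistribution of cylinder directions, the regularized boundary contribution is identified with the area Siegel--Veech constant $c(m)$. The delicate points are the uniform estimate on the degeneration of the Hodge norm guaranteeing existence of the principal value, and the interchange of the cusp limit with integration against $m$; this interchange is exactly where the regularity hypothesis enters, and it is the crux of the whole argument. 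Combining the local term $\kappa_\mu$ with the boundary term $c(m)$ gives $L(m)=\kappa_\mu+c(m)$, and the final assertion follows because connected components of strata and the measures on \teichm curves are known to be regular.
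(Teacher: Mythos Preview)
The paper does not prove this theorem at all: it is quoted from \cite{ekz} and used as a black box, with no argument given beyond the citation. So there is no ``paper's own proof'' against which your sketch can be compared.

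Your outline is a reasonable high-level summary of the Eskin--Kontsevich--Zorich strategy: the Kontsevich--Forni formula expressing $L(m)$ as an averaged curvature of the Hodge metric, a local Riemann--Roch/Gauss--Bonnet computation at the cone points producing $\kappa_\mu$, and an analytic Riemann--Roch argument near the boundary that identifies the remaining principal-value contribution with the area Siegel--Veech constant $c(m)$, with regularity entering exactly to justify the limiting interchange. As a sketch this is faithful to the structure of \cite{ekz}, but it is not a proof: each of the three steps (the curvature identity in the non-compact setting, the precise local computation yielding the coefficients $\tfrac{m_i(m_i+2)}{12(m_i+1)}$, and especially the boundary analysis tying the regularized divergence to the Siegel--Veech transform) is substantial, and your text gestures at rather than carries out these computations. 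For the purposes of this paper that is fine, since the authors themselves simply cite the result; but you should not present this as a self-contained proof.
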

\par
For any given \teichm curve at a time this theorem allows to calculate
the sum of Lyapunov exponents. It suffices to calculate the cusps (in practice,
e.g.\ for a square-tiled surface, this amounts to calculating the Veech group)
and to evaluate the (area) Siegel-Veech contribution of the cusp.
\par
Let $s(C)$ be the {\em slope} of a Teichm\"{u}ller curve 
$C$ defined by 
$$s(C) = \frac{\bC\cdot \delta_0}{\bC\cdot \lambda}.$$ 
Since a Teichm\"{u}ller curve generated by a flat surface does not intersect $\delta_i$ in $\barmoduli$ for $i > 0$ (see Corollary~\ref{cor:zerosatinf}),
its slope can also be defined as 
$$ s(C) = \frac{\bC\cdot \delta}{\bC\cdot \lambda}, $$
where $\delta = \sum_{i=0}^{[g/2]} \delta_i$ is the total boundary divisor. The latter is more commonly used for the slope of an arbitrary one-dimensional family of stable genus $g$ curves.  
\par
Given a \teichm curve $C$, one has to understand {\em only one} of
the quantities $L(C)$, $c(C)$ and $s(C)$, because of the relation
in Theorem~\ref{thm:ekz} and another relation as follows. 
\begin{prop}
For a Teichm\"{u}ller curve $C$ generated by a flat surface, we have 
\begin{equation}
\label{eq:Lcs}
s(C) = \frac{12c(C)}{L(C)} = 12 - \frac{12 \kappa_\mu}{L(C)}.  
\end{equation}
\end{prop}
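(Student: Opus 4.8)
The plan is to reduce both equalities to the two structural inputs already available, namely the relation $L(C)=\kappa_\mu+c(C)$ of Theorem~\ref{thm:ekz} and the degree formula of Theorem~\ref{thm:Lyapviadeg}, combined with the classical Noether relation $\kappa_1=12\lambda-\delta$ on $\barmoduli$. The second equality is purely formal: substituting $c(C)=L(C)-\kappa_\mu$ gives $\tfrac{12c(C)}{L(C)}=\tfrac{12(L(C)-\kappa_\mu)}{L(C)}=12-\tfrac{12\kappa_\mu}{L(C)}$. Hence the whole content lies in the first equality, which I would establish in the equivalent form $s(C)=12-\tfrac{12\kappa_\mu}{L(C)}$.

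First I would pass to the finite unramified cover $\bB\to\bC$ of Section~\ref{sec:Teichgen}, on which $s$, $L$, $c$ and $\kappa_\mu$ are all unchanged, and work with the relatively minimal smooth semistable model $f\colon\bcX\to\bB$. Restricting Noether's relation to $\bC$ and using $\bC\cdot\delta=\bC\cdot\delta_0$ from Corollary~\ref{cor:zerosatinf}, I obtain $\bC\cdot\delta_0=12\,\bC\cdot\lambda-\bC\cdot\kappa_1$, so that
\[ s(C)=\frac{\bC\cdot\delta_0}{\bC\cdot\lambda}=12-\frac{\bC\cdot\kappa_1}{\bC\cdot\lambda}. \]
Since $\bC\cdot\kappa_1$ equals the self-intersection $\omega_{\bcX/\bB}^2$ on the surface $\bcX$, everything reduces to computing $\omega_{\bcX/\bB}^2$.

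The key geometric step is the divisor identity $\omega_{\bcX/\bB}=f^*\LL^{(1,0)}+\sum_i m_iS_i$, where $\LL^{(1,0)}$ is the $(1,0)$-part of the generating sub-VHS $\LL$: the generating form $\omega$, viewed as a section of $\omega_{\bcX/\bB}\otimes (f^*\LL^{(1,0)})^{-1}$, vanishes exactly along $\sum_i m_iS_i$. Here Corollary~\ref{cor:zerosatinf} guarantees that $\omega$ extends over every singular fibre as a nonzero section of the dualizing sheaf with unchanged zero orders, so this divisor carries no vertical component, while Proposition~\ref{prop:Teichbddisj} keeps the sections $S_i$ disjoint. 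Adjunction gives $\omega_{\bcX/\bB}\cdot S_i=-S_i^2=:\psi_i$, and intersecting the divisor identity with $S_i$ (using $S_i\cdot S_j=0$ and the absence of a vertical part) yields $(m_i+1)\psi_i=\deg\LL^{(1,0)}$. Therefore
\[ \omega_{\bcX/\bB}^2=(2g-2)\deg\LL^{(1,0)}+\sum_i m_i\psi_i=\deg\LL^{(1,0)}\sum_i\frac{m_i(m_i+2)}{m_i+1}=12\kappa_\mu\deg\LL^{(1,0)}. \]

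Finally I would insert the two numerical inputs from Theorem~\ref{thm:Lyapviadeg}: applied to $\WW=\LL$, whose top exponent is $1$ (the maximal Higgs property of the generating VHS, cf.\ \cite{moeller06}), it gives $\deg\LL^{(1,0)}=\tfrac12(2g(\bB)-2+|\Delta|)$; applied to the full Hodge bundle it gives $\bC\cdot\lambda=\deg f_*\omega_{\bcX/\bB}=\tfrac12 L(C)(2g(\bB)-2+|\Delta|)$. Dividing yields $\tfrac{\bC\cdot\kappa_1}{\bC\cdot\lambda}=\tfrac{12\kappa_\mu}{L(C)}$, which closes the argument. The main obstacle is the boundary bookkeeping underlying the divisor identity: one must be certain that the extended form acquires no vertical zeros and that the sections $S_i$ neither collide with each other nor run into the nodes, which is precisely what the cusp analysis of Section~\ref{sec:propTeich} supplies, and one should also verify that resolving the nodal singularities $xy=t^k$ of the total space to reach a smooth model is crepant and hence leaves $\omega_{\bcX/\bB}^2$ unchanged.
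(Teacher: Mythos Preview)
Your proof is correct and follows essentially the same approach as the paper: both arguments combine Noether's formula $12\lambda=\delta+\omega_{\bcX/\bB}^2$ with the divisor identity $\omega_{\bcX/\bB}=f^*\cL+\sum_i m_iS_i$ (which the paper records in the proof of Proposition~\ref{prop:intomega}) and the degree formulas of Theorems~\ref{thm:Lyapviadeg} and~\ref{thm:ekz}. The paper's write-up is terser, phrasing the endpoint as $6\chi\,c(C)=\deg\delta$ and referring to \cite{chenrigid} for details, but the ingredients and logic are the same as yours; your version is in fact a more explicit unfolding of the computation.
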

\begin{proof}
This is a consequence of the Noether formula 
$$12 \lambda = \delta + f_{*}(c^2_1(\omega_{\bcX/\bC})),$$ 
as shown in \cite[Theorem 1.8]{chenrigid}.
\par
We can also directly see how it works. By Theorem~\ref{thm:Lyapviadeg} we know 
$\chi\cdot L(C) = 2 \deg \lambda$, where $\chi = 2g(C) - 2 + |\Delta|$. Using the Noether formula, the class 
of $\omega_{\bcX/\bC}$ in the proof of 
Proposition~\ref{prop:intomega} and Theorem~\ref{thm:ekz}, we can derive that $6 \chi\cdot c(C) = \deg \delta$. 
Hence the equality~\eqref{eq:Lcs} follows immediately.  
\end{proof}
\par
Now our strategy becomes clear. In order to show a stratum is non-varying, it suffices to show all \teichm curves in that stratum 
are disjoint from an effective divisor on $\barmoduli[g]$, hence they all have the same slope as that of the divisor. Then by~\eqref{eq:Lcs}, they have the same sum of Lyapunov exponents as well. We summarize this idea as follows. 
\begin{lemma}
\label{lem:slopenonvarying}
Let $D$ be an effective divisor on $\barmoduli$. Suppose the closures of all \teichm curves $\bC$ generated
by flat surfaces in a fixed stratum do not intersect $D$. 
Then they have the same slope  
$s(C)  = s(D)$. In particular, the sums of Lyapunov exponents are the same for these \teichm curves. 
\end{lemma}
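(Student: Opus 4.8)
The plan is to deduce Lemma~\ref{lem:slopenonvarying} directly from the definition of slope together with the disjointness hypothesis, using the fact that all the relevant intersection numbers are forced to agree. Recall that for a \teichm curve $C$ we set $s(C) = (\bC \cdot \delta_0)/(\bC \cdot \lambda)$, and by Corollary~\ref{cor:zerosatinf} the curve $\bC$ meets none of the higher boundary divisors $\delta_i$ for $i>0$. The central observation is that an effective divisor $D$ which is disjoint from $\bC$ contributes zero to the intersection product $\bC \cdot D$, and this single numerical equation, once expanded in the standard generators of $\Pic(\barmoduli)$, pins down the ratio $a/b_0$.

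First I would write $D = a\lambda - b_0\delta_0 - \sum_{i\geq 1} b_i \delta_i$ in the Picard group of $\barmoduli$, so that by definition $s(D) = a/b_0$. The disjointness assumption $\bC \cap D = \emptyset$ gives $\bC \cdot D = 0$, since an effective divisor disjoint from a complete curve has vanishing intersection number with it. Expanding this intersection and invoking Corollary~\ref{cor:zerosatinf} to kill every term $\bC \cdot \delta_i$ with $i>0$, I obtain
\begin{equation}
0 = \bC \cdot D = a\,(\bC \cdot \lambda) - b_0\,(\bC \cdot \delta_0).
\end{equation}
Rearranging yields $a\,(\bC\cdot\lambda) = b_0\,(\bC\cdot\delta_0)$, and dividing through (noting $\bC\cdot\lambda \neq 0$, as $\lambda$ is positive on any such non-isotrivial family) gives $s(C) = (\bC\cdot\delta_0)/(\bC\cdot\lambda) = a/b_0 = s(D)$. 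Since the right-hand side depends only on $D$ and not on the particular \teichm curve $C$, this value is the same for every $C$ in the stratum, establishing the non-varying property of the slope.

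For the final assertion about Lyapunov exponents, I would simply invoke the relation~\eqref{eq:Lcs} from the preceding proposition, namely $s(C) = 12 - 12\kappa_\mu/L(C)$. Solving for $L(C)$ shows it is a strictly monotone function of $s(C)$ with $\kappa_\mu$ depending only on the signature $\mu$ of the stratum, which is fixed; hence equal slopes force equal sums of Lyapunov exponents. I do not anticipate a genuine obstacle here, since the argument is essentially bookkeeping: the only points requiring care are the justification that disjointness yields a zero intersection number (a standard fact for effective divisors and complete curves on the moduli stack, valid after passing to a finite cover where $\bC$ is an honest curve) and the appeal to Corollary~\ref{cor:zerosatinf} to discard the higher boundary contributions. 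The substance of the lemma lies not in its proof but in verifying its hypothesis for each stratum, which is precisely the content of the later sections.
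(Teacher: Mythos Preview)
Your proof is correct and follows essentially the same approach as the paper: write $D$ in the standard basis, use disjointness to get $\bC\cdot D = 0$, invoke Corollary~\ref{cor:zerosatinf} to eliminate the higher boundary terms, solve for $s(C) = a/b_0$, and then appeal to~\eqref{eq:Lcs} for the Lyapunov exponents. The paper's own proof is just a terser version of the same computation.
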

\par
\begin{proof}
Recall the slope of an effective divisor defined in Section~\ref{sec:picmoduli}. Suppose $D$ has class
$a\lambda - \sum_{i=0}^{[g/2]} b_i \delta_i$. Then it has slope $s(D) = a / b_0$. Since $\bC\cdot D = 0$ and 
$\bC\cdot \delta_i = 0$
for $i > 0$ (Corollary~\ref{cor:zerosatinf}), we conclude that   
 $$ \frac{\bC\cdot \delta_0}{\bC\cdot \lambda} = \frac{a}{b_0}, $$
hence $s(C) = s(D)$. Since the slopes are non-varying, so are the sums of Lyapunov exponents 
and the Siegel-Veech constants for those \teichm curves, according to~\eqref{eq:Lcs}. 
\end{proof}
The same argument can help us find upper bounds for the slope as well as for the sum of Lyapunov exponents for a \teichm curve. 
\begin{lemma}
\label{lem:upperbound}
Let $D$ be an effective divisor on $\barmoduli$. Suppose a \teichm curve $C$ is not contained in $D$. Then we have   
$s(C) \leq s(D)$.
\end{lemma}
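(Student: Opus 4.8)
The plan is to mimic the computation in the proof of Lemma~\ref{lem:slopenonvarying}, replacing the vanishing intersection number used there by a merely nonnegative one. First I would write the class of $D$ in terms of the standard generators of $\Pic(\barmoduli)$ as $D = a\lambda - \sum_{i=0}^{[g/2]} b_i\delta_i$, so that by definition $s(D) = a/b_0$. For all the effective divisors we apply this to one has $b_0 > 0$, which is what lets us divide by $b_0$ at the end without reversing the inequality.

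The key input is the nonnegativity $\bC \cdot D \geq 0$. Since $C$ is a \teichm curve not contained in $D$, its closure $\bC$ is a complete curve that is not a component of the effective divisor $D$; hence $D$ restricts to an effective Cartier (or $\QQ$-Cartier) divisor on $\bC$, and the degree of this restriction is the sum of the local intersection multiplicities at the finitely many points of $\bC \cap D$, each of which is nonnegative. The one point requiring care is that $\barmoduli$ is a stack whose coarse space is singular, so I would either pull $D$ and $\bC$ back to a suitable scheme cover where ordinary intersection theory applies, or simply invoke the well-definedness of the intersection pairing on the rational Picard group used throughout Section~\ref{sec:picmoduli}. Either way one obtains $\bC \cdot D \geq 0$.

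Next I would expand this number using Corollary~\ref{cor:zerosatinf}, which gives $\bC \cdot \delta_i = 0$ for every $i > 0$ and collapses the sum to
$$\bC \cdot D = a\,(\bC\cdot\lambda) - b_0\,(\bC\cdot\delta_0) \geq 0.$$
To convert this into a slope bound I need $\bC\cdot\lambda > 0$. This follows from Theorem~\ref{thm:Lyapviadeg}, which identifies $\bC\cdot\lambda = \deg f_*\omega_{\bcX/\bB}$ with a positive multiple of the sum of Lyapunov exponents; that sum is strictly positive for a \teichm curve, so $\bC\cdot\lambda > 0$. Dividing the displayed inequality by the positive quantity $b_0\,(\bC\cdot\lambda)$ then yields
$$s(C) = \frac{\bC\cdot\delta_0}{\bC\cdot\lambda} \leq \frac{a}{b_0} = s(D),$$
as claimed.

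The main obstacle is the nonnegativity step $\bC\cdot D \geq 0$; everything else is bookkeeping identical to Lemma~\ref{lem:slopenonvarying}. The hypothesis that $C$ is not contained in $D$ is exactly what guarantees that $\bC$ meets $D$ properly, so that local multiplicities are nonnegative — without it the intersection number could be negative. It is worth noting that Lemma~\ref{lem:slopenonvarying} is precisely the boundary case in which $\bC$ avoids $D$ entirely, making $\bC\cdot D = 0$ and turning the present inequality into an equality.
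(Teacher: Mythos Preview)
Your proof is correct and follows essentially the same approach as the paper's: write $D$ in terms of standard generators, use $\bC\cdot D \geq 0$ together with $\bC\cdot\delta_i = 0$ for $i>0$ to reduce to $a(\bC\cdot\lambda) - b_0(\bC\cdot\delta_0) \geq 0$, and divide through. You have simply filled in justifications (positivity of $\bC\cdot\lambda$, positivity of $b_0$, the stack-theoretic nonnegativity of the intersection) that the paper leaves implicit.
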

\begin{proof}
Suppose $D$ has class $a\lambda - \sum_{i=0}^{[g/2]} b_i \delta_i$. By assumption we have $C\cdot D\geq 0$. It implies that 
$a (\bC\cdot \lambda) - b_0 (\bC\cdot \delta_0) \geq 0$, hence 
$$s(C) = \frac{\bC\cdot \delta_0}{\bC\cdot \lambda} \leq \frac{a}{b_0} = s(D).$$ 
\end{proof}
\par
In some cases it is \emph{not} possible to find an effective divisor on $\barmoduli$ to perform the disjointness argument (see e.g. the explanation in Section~\ref{sec:str22odd}). Alternatively, we have to consider moduli spaces of curves with marked points or spin structures. Consequently we need to know the intersection of \teichm curves with the classes $\omega_{i,\rel}$ 
introduced in Section~\ref{sec:picmoduli}. 
\par
Let $C$ be a \teichm curve generated by 
$(X,\omega) \in \omoduli[g](m_1,\ldots, m_k)$.
Let $B \to C$ be a finite unramified cover such that
the $m_i$-fold zero defines a section $\sigma_i$ (not only a multi-section)
with image $S_i$ of the pullback family $f: \cX \to \bB$.  
\par
\begin{prop} \label{prop:intomega}
If $f: \bB \to \barmoduli[{g,1}]$ is the lift of
a \teichm curve by marking the zero of order $m_i$, then 
$$ S_i^2 = \frac{- \chi}{2(m_i+1)},$$
where $\chi = 2g(\bB) -2 + |\Delta|$ and $\Delta$ is the set of cusps in $\bB$. 
In particular the 
intersection number with $\omega_{i,\rel}$, which is by definition equal to $-S_i^2$,
is given by $$ \bB\cdot \omega_{i,\rel} = \frac{\bB\cdot \lambda - 
(\bB\cdot \delta)/12}{ (m_i+1) \kappa_\mu}, $$
where $\kappa_{\mu} = \frac{1}{12}\sum_{j=1}^k\frac{m_j(m_j+2)}{m_j+1}.$
\end{prop}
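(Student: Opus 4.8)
The plan is to compute the self-intersection $S_i^2$ directly on the smooth total space $\bcX$ by combining adjunction for the section with a tautological formula for the relative dualizing sheaf, and then to read off the expression for $\bB\cdot\omega_{i,\rel}$ by substituting into Theorems~\ref{thm:Lyapviadeg} and~\ref{thm:ekz}. The starting point is the tautological relation
$$ \omega_{\bcX/\bB} \cong f^*\LL^{(1,0)} \otimes \OO_{\bcX}\Big(\sum_{j=1}^k m_j S_j\Big), $$
where $\LL^{(1,0)}\subset f_*\omega_{\bcX/\bB}$ is the line bundle spanned fiberwise by the generating form. It arises from the bundle map $\phi\colon f^*\LL^{(1,0)}\to\omega_{\bcX/\bB}$ adjoint to this inclusion: on each smooth fiber $\phi$ is multiplication by $\omega$, so its vanishing divisor is the divisor of zeros of the generating form.

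The main obstacle, and the only place where the geometry of \teichm curves really enters, is to show that the vanishing divisor of $\phi$ is purely horizontal, i.e.\ equal to $\sum_j m_j S_j$ with no vertical component along the singular fibers. Here I would apply Corollary~\ref{cor:zerosatinf}: the form extends to a section $\omega_\infty$ of the dualizing sheaf of each singular fiber, with zeros of exactly the signature $(m_1,\ldots,m_k)$, so $\omega_\infty$ is not identically zero on any component and does not vanish at the nodes. A vertical component in the divisor of $\phi$ would force $\phi$, hence $\omega_\infty$, to vanish identically along some fiber component, which is impossible. The rational components created in passing to the smooth semistable model carry $\omega_{\bcX/\bB}$ of degree zero, and $\phi$ restricts to a nonzero section of a trivial bundle on them, so they contribute nothing either. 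Together with the disjointness $S_i\cdot S_j=0$ for $i\neq j$ --- which is exactly Proposition~\ref{prop:Teichbddisj} for the degeneration merging the $i$-th and $j$-th zeros --- and the maximality of the Higgs field $\deg\LL^{(1,0)}=\chi/2$ from \cite{moeller06} (equivalently $\lambda_1^{\LL}=1$ via Theorem~\ref{thm:Lyapviadeg}), this pins down all the intersection data.

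The remaining computation is formal. Adjunction on the smooth surface $\bcX$ gives $\omega_{\bcX}\cdot S_i + S_i^2 = 2g(\bB)-2$, and substituting $\omega_{\bcX}=\omega_{\bcX/\bB}\otimes f^*\omega_{\bB}$ together with $f^*\omega_{\bB}\cdot S_i = 2g(\bB)-2$ collapses this to $S_i^2 = -\,\omega_{\bcX/\bB}\cdot S_i$. Intersecting the tautological relation with $S_i$ and using $S_i\cdot S_j=0$ for $j\neq i$ yields $\omega_{\bcX/\bB}\cdot S_i = \tfrac{\chi}{2}+m_i S_i^2$, whence $(m_i+1)S_i^2=-\tfrac{\chi}{2}$ and $S_i^2=-\chi/\big(2(m_i+1)\big)$, as claimed.

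Finally I would convert this into the formula for $\bB\cdot\omega_{i,\rel}=-S_i^2$. Theorem~\ref{thm:Lyapviadeg} gives $\bB\cdot\lambda=\deg f_*\omega_{\bcX/\bB}=\tfrac12\chi L(C)$, while the Noether-type identity used in the proof of~\eqref{eq:Lcs} gives $\bB\cdot\delta=6\chi\,c(C)$; hence
$$ \bB\cdot\lambda-\frac{\bB\cdot\delta}{12}=\frac{\chi}{2}\big(L(C)-c(C)\big)=\frac{\chi}{2}\,\kappa_\mu, $$
using $L(C)=\kappa_\mu+c(C)$ from Theorem~\ref{thm:ekz}. Dividing by $(m_i+1)\kappa_\mu$ recovers $\chi/\big(2(m_i+1)\big)=-S_i^2=\bB\cdot\omega_{i,\rel}$, which completes the argument; every step after the exclusion of vertical contributions is a direct consequence of results already in hand.
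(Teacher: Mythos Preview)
Your proposal is correct and follows essentially the same route as the paper: the tautological identification $\omega_{\bcX/\bB}\cong f^*\cL\otimes\OO_{\bcX}(\sum_j m_j S_j)$, adjunction for the section, and the maximal Higgs property $\deg\cL=\chi/2$ are exactly the ingredients the paper uses, and your derivation of the second formula via Theorems~\ref{thm:Lyapviadeg} and~\ref{thm:ekz} matches the paper's as well. You are simply more explicit than the paper about two points it leaves implicit --- the absence of vertical components in the divisor of $\phi$ and the disjointness $S_i\cdot S_j=0$ for $i\neq j$ --- but the underlying argument is the same.
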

\par
\begin{proof}
Let $\cL \subset f_* \omega_{\bcX/\bB}$ be the ('maximal Higgs', see \cite{moeller06})
line bundle whose fiber over the point corresponding to $[X]$ is $\CC \cdot \omega$, 
the generating differential of the \teichm curve. The property
'maximal Higgs' says by definition that 
\begin{equation} \label{eq:maxHiggs}
 \deg(\cL) = \chi/2.
\end{equation}
Let $S$ be the union of the sections $S_1,\ldots, S_k$. 
Pulling back the above inclusion to $\cX$ gives an exact sequence
$$ 0 \rightarrow f^* \cL \rightarrow \omega_{\bcX/\bB} \rightarrow \cO_S\left( \sum_{j=1}^k m_j S_j\right) \rightarrow 0,$$
since the multiplicities of the vanishing locus of the generating differential
of the \teichm curve are constant along the whole compactified \teichm curve.
This implies that $\omega_{\cX /\bB}$ is numerically equal to 
$$ f^* \cL + \sum_{j=1}^k m_j S_j.$$
By the adjunction formula we get
$$S_i^2 = -\omega_{\bcX /\bB} \cdot S_i = -m_i S_i^2 - \deg(\cL)$$
since the intersection product of two fibers of $f$ is zero. Together
with \eqref{eq:maxHiggs} we thus obtain the desired self-intersection formula.
\par
By Theorem~\ref{thm:Lyapviadeg} and the relation~\eqref{eq:Lcs}, we have 
$$\bB\cdot \lambda = \frac{\chi}{2}\cdot L, $$
$$\bB\cdot \delta = \frac{\chi}{2}\cdot (12L - 12\kappa_{\mu}). $$
Hence the second claimed formula follows, for $- S_i^2 = \bB\cdot \omega_{i,\rel}$ by definition. 
\end{proof}
\par
\begin{rem}
For square-tiled surfaces the self-intersection number of a section on the 
elliptic surface is not hard to calculate (\cite{Kd63}, recalled in \cite{moellerST}, 
and also in \cite[Theorem 1.15]{chencovers}). 
Pullback introduces the coefficient $m_i+1$ in the denominator. This shows the 
formula in the square-tiled case. The general case of the 
formula can also be shown by adapting the argument given in
 \cite[Theorem 12.2]{bainbridge07}, since there are $m_i+1$ ways to
split a singularity of order $m_i$.
\end{rem}
\par
If $L$ is non-varying for all \teichm curves (or just those generated by square-tiled surfaces) in a 
stratum $\omoduli[g](\mu)$, it implies that the sum of Lyapunov exponents for the whole stratum is equal to $L$. 
\begin{prop}\label{localglobal}
As the area (i.e. the degree of the torus coverings) approaches infinity, the limit of sums of Lyapunov exponents for \teichm curves generated by square-tiled surfaces in 
a stratum is equal to the sum of Lyapunov exponents for that stratum. 
\end{prop}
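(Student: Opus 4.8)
The plan is to reduce the statement to a convergence of Siegel--Veech constants and then to exploit the interpretation of both sides as counts of cylinders. By Theorem~\ref{thm:ekz} every \teichm curve $C$ generated by a square-tiled surface $S$ satisfies $L(C) = \kappa_\mu + c(C)$, while the sum of Lyapunov exponents for the whole stratum equals $\kappa_\mu + c_\mu$. Since $\kappa_\mu$ depends only on the signature $\mu$, it is common to all square-tiled surfaces in the stratum and to the stratum itself, so it suffices to prove that the Siegel--Veech constants $c(C)$ of the degree-$N$ square-tiled surfaces, averaged over all such surfaces, converge to the area Siegel--Veech constant $c_\mu$ as $N \to \infty$. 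In the non-varying case every $c(C)$ is already the same constant, and the assertion is then immediate.

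Next I would recall the two counting descriptions that make this convergence plausible. On the one hand, $c_\mu$ is, by \cite{emz}, the leading coefficient in the quadratic asymptotics of the Siegel--Veech transform of the cylinder-counting function, i.e.\ an average over the stratum, with respect to the Masur--Veech measure, of the number of horizontal cylinders weighted by their height-to-length ratio. On the other hand, for a fixed square-tiled surface the constant $c(C)$ admits a purely combinatorial expression as a sum over the cusps of $C$ of the weighted cylinder contributions in the corresponding completely periodic directions, as used in \cite{ekz}. The bridge between the two is the classical fact that the degree-$N$ square-tiled surfaces are exactly the integer points of the stratum in period coordinates; after rescaling each unit square to area $1/N$, the uniform probability measures on these finite sets should converge weakly to the normalized Masur--Veech measure as $N \to \infty$, the relevant asymptotic counts being governed by the known volume computations for strata of abelian differentials (Eskin--Okounkov).

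I would then carry out the matching of the two counts. Summing the cusp-wise cylinder contributions defining $c(C)$ over all degree-$N$ square-tiled surfaces amounts to a single global count of pairs consisting of such a surface together with one of its horizontal cylinders, each pair weighted by the height-to-length ratio of the cylinder. Organizing these pairs according to the topological type of the complementary surface expresses the count in terms of the Masur--Veech volumes of the stratum and of its principal boundary strata, exactly the data entering the Eskin--Masur--Zorich formula for $c_\mu$. Dividing by the total number of degree-$N$ square-tiled surfaces, which is asymptotic to a volume multiple of $N^{\dim}$, and letting $N \to \infty$, should then reproduce precisely the ratio defining $c_\mu$, and hence the convergence of the averaged $L(C)$ to $\kappa_\mu + c_\mu$.

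The main obstacle I anticipate is analytic rather than combinatorial: interchanging the limit $N \to \infty$ with the infinite sum over cylinder configurations. One must establish a uniform tail estimate showing that cylinders of large circumference, and configurations with many components, contribute negligibly to the normalized counts, so that no mass escapes in the weak limit and the averaged Siegel--Veech transform converges to its integral against the Masur--Veech measure. Keeping careful track of the normalization, so that the discrete counting measures with squares of area $1/N$ genuinely approximate the area-one Masur--Veech measure, is the second delicate point; once these two estimates are in place, the convergence $c(C) \to c_\mu$ on average, and therefore $L(C) \to \kappa_\mu + c_\mu$, follows.
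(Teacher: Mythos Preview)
Your approach is essentially the same as the paper's: both rest on the fact that square-tiled surfaces are the lattice points of the stratum in period coordinates and hence equidistribute to the Masur--Veech measure as the degree goes to infinity. The paper does not spell out the argument at all---it simply points to \cite{eo} for the lattice-point interpretation and to \cite[Appendix~A]{chenrigid} for a full proof---whereas you sketch the mechanism explicitly via the reduction $L = \kappa_\mu + c$ to Siegel--Veech constants and the matching of discrete cylinder counts with the Eskin--Masur--Zorich formula. Your interpretation of the statement as convergence of the \emph{average} over degree-$N$ surfaces (rather than of each individual curve) is the right one, and is exactly what the application preceding the proposition requires; the technical obstacles you flag (uniform tail bounds, correct normalization) are real and are precisely what the cited appendix handles.
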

\par
This is due to the fact that square-tiled surfaces in a stratum are parameterized by 'lattice points' under the period coordinates (see e.g. \cite[Lemma 3.1]{eo}). 
Hence, their asymptotic behavior reveals information for the whole stratum (see e.g. \cite[Appendix A]{chenrigid} for a proof).

\section{Genus three} \label{sec:g=3}
In genus $3$ all the strata have non-varying sums of Lyapunov exponents except the principal stratum. We 
summarize the results in Table~\ref{cap:tabg3}. We also give a sharp upper bound for the sum of Lyapunov exponents for the principal stratum. 
\par
Let us first explain how to read the table. For example, the stratum $(2,2)^{\odd}$ is non-varying. The sum of Lyapunov exponents is 
equal to $\frac{5}{3}$ ($\approx 1.66666$) for both the stratum and every \teichm curve in the stratum. On the other hand, 
the principal stratum $(1,1,1,1)$ is varying. The sum of Lyapunov exponents for the whole stratum is $\frac{53}{28}$ ($\approx 1.89285$).  
The sharp upper bound for the sums of Lyapunov exponents of \teichm curves in this stratum is $2$. It can be attained, e.g. by \teichm curves 
in the locus of hyperelliptic flat surfaces as the image of $\cQ(2,2,-1^8)$ in the context of Theorem~\ref{thm:calchyp}. Later on we will use similar tables to encode the behavior of \teichm curves in other genera. 

\begin{figure}
    \centering      
$$
\begin{array}{|c|c||c|c|c|c|c|}

\hline
&&\multicolumn{5}{|c|}{}\\

\multicolumn{1}{|c|}{\text{Degrees}}&
\multicolumn{1}{|c||}{\text{Hyperelliptic}}&
\multicolumn{5}{|c|}{\text{Lyapunov exponents}}\\

\multicolumn{1}{|c|}{\text{of }}&
\multicolumn{1}{|c||}{\text{or spin}}&
\multicolumn{5}{|c|}{\text{}}\\

\cline{3-7}

\multicolumn{1}{|c|}{\text{zeros}}&
\multicolumn{1}{|c||}{\text{structure}}&
\multicolumn{2}{|c|}{\text{}}&
\multicolumn{3}{|c|}{\text{}}\\

\multicolumn{1}{|c|}{(d_1,\dots,d_n)}&
\multicolumn{1}{|c||}{}&
\multicolumn{2}{|c|}{\text{Component}}&
\multicolumn{3}{|c|}{\text{\teichm curves}}\\

\multicolumn{1}{|c|}{\text{}}&
\multicolumn{1}{|c||}{\text{}}&
\multicolumn{2}{|c|}{\text{}}&
\multicolumn{3}{|c|}{\text{}}\\

\cline{3-7}

\multicolumn{1}{|c|}{\text{}}&
\multicolumn{1}{|c||}{\text{}}&
\multicolumn{1}{|c|}{\approx}&
\multicolumn{1}{|c|}{\overset{g}{\underset{j=1}{\sum}} \lambda_j} &
\multicolumn{1}{|c|}{\approx}&
\multicolumn{1}{|c|}{\overset{g}{\underset{j=1}{\sum}} \lambda_j}&
\multicolumn{1}{|c|}{\text{Reference}}\\
[-\halfbls] &&&&&&\\
\hline &&&& \multicolumn{2}{|c|}{} & \\  [-\halfbls]
(4) & \text{hyperelliptic}  & 1.80000 & \frac{9}{5} 
& \multicolumn{2}{|c|}{\text{Non-varying}} & \text{Thm.~\ref{thm:calchyp}} \\ 
[-\halfbls] &&&&\multicolumn{2}{|c|}{} &\\
\hline &&&& \multicolumn{2}{|c|}{} & \\  [-\halfbls]
(4) & \text{odd} &  1.60000 & \frac{8}{5} 
& \multicolumn{2}{|c|}{\text{Non-varying}} & \text{Sec.~\ref{sec:4odd}}\\ 
[-\halfbls] &&&&\multicolumn{2}{|c|}{}&\\
\hline &&&& \multicolumn{2}{|c|}{} & \\  [-\halfbls]
(3,1) & -  & 1.75000 & \frac{7}{4} 
& \multicolumn{2}{|c|}{\text{Non-varying}} & \text{Sec.~\ref{sec:31}}\\ 
[-\halfbls] &&&&\multicolumn{2}{|c|}{}&\\
\hline &&&& \multicolumn{2}{|c|}{} & \\  [-\halfbls]
(2, 2) & \text{hyperelliptic} & 2.00000 & 2 
& \multicolumn{2}{|c|}{\text{Non-varying}} & \text{Thm.~\ref{thm:calchyp}}  \\ 
[-\halfbls] &&&&\multicolumn{2}{|c|}{}&\\
\hline &&&& \multicolumn{2}{|c|}{} & \\  [-\halfbls]
(2,2) & \text{odd} & 1.66666 & \frac{5}{3} 
& \multicolumn{2}{|c|}{\text{Non-varying}} & \text{Sec.~\ref{sec:str22odd}}\\ 
[-\halfbls] &&&&\multicolumn{2}{|c|}{}&\\
\hline &&&& \multicolumn{2}{|c|}{} & \\  [-\halfbls]
(2,1,1) & - & 1.83333 & \frac{11}{6}  
& \multicolumn{2}{|c|}{\text{Non-varying}} & \text{Sec.~\ref{sec:211}}\\ 
[-\halfbls] &&&&\multicolumn{2}{|c|}{}&\\
\hline &&&&&& \\ [-\halfbls]
(1,1,1,1) & - & 1.89285 & \frac{53}{28} & 2 & 2 & \cQ(2,2,-1^8)\\ 
[-\halfbls] &&&&&&\\ \hline
\end{array}
$$
\caption{Varying and non-varying sums in genus three} \label{cap:tabg3}
\end{figure}

\subsection{The stratum $\omoduli[3](4)^\odd$} \label{sec:4odd}

In the case  $\omoduli[3](4)^\odd$ the algorithm of \cite{emz} to
calculate Siegel-Veech constants for components of strata gives
$$ L_{(4)^\odd} = 8/5, \quad  s_{(4)^\odd} = 9, \quad c_{(4)^\odd} = 6/5. $$
\par
\begin{proof}[{Proof of Theorem~\ref{thm:g3main}, Case ${\omoduli[3]}(4)^\odd$}]
The connected components  ${\omoduli[3]}(4)^\odd$  and ${\omoduli[3]}(4)^\hyp$ are not 
only disjoint in $\omoduli[3]$, by Proposition~\ref{prop:Teichhypdisj} they are also disjoint
in $\obarmoduli[3]$. Hence a \teichm curve $\bC$ generated by a flat surface in this stratum do not 
intersect the hyperelliptic locus $H$ in 
$\barmoduli[3]$. Recall the divisor class of $H$ in \eqref{eq:classofH}.
By Lemma~\ref{lem:slopenonvarying} and $s(H) = 9$, we obtain that $s(C)=9$, hence $c(C) = 6/5$
and $L(C) = 8/5$ for all \teichm curves in this stratum 
using \eqref{eq:Lkappac} and \eqref{eq:Lcs}.
\end{proof}

\subsection{The stratum $\omoduli[3](3,1)$} \label{sec:31}


In the case  $\omoduli[3](3,1)$ we have
$$ L_{(3,1)} = 7/4, \quad  s_{(3,1)} = 9, \quad c_{(3,1)} = 21/16. $$
\par
\begin{proof}[{Proof of Theorem~\ref{thm:g3main}, Case ${\omoduli[3]}(3,1)$}]
As in the case of the stratum $\omoduli[3](4)^\odd$, a \teichm curve $C$ generated
by a flat surface in the 
stratum $(3,1)$ does not intersect the hyperelliptic
locus, not even at the boundary by Proposition~\ref{prop:Teichhypdisj}. Consequently 
we can apply the same disjointness argument as in the preceding case. 
Since $s(H)=9$, we obtain that $s(C) =9$, hence $c(C) = 21/16$
and $L(C) = 7/4$ using \eqref{eq:Lkappac} and \eqref{eq:Lcs}.

\end{proof}

\subsection{The stratum $\omoduli[3](2,2)^\odd$} \label{sec:str22odd}

In the case  $\omoduli[3](2,2)^\odd$, we have
$$ L_{(2,2)^\odd} = 5/3, \quad  s_{(2,2)^\odd} = 44/5, \quad c_{(2,2)^\odd} = 11/9. $$
\par
Note that on $\barmoduli[3]$ the smallest slope of an effective divisor is $9$ 
(attained by the divisor $H$ of hyperelliptic curves, see \cite[Theorem 0.4]{hamoslope}). In order to show a \teichm curve $\bC$ 
has $L(C) = 44/5$ in this stratum, we \emph{cannot} use an effective divisor $D$ on $\barmoduli[3]$ such that 
$\bC\cdot D = 0$. Otherwise by Lemma~\ref{lem:slopenonvarying} 
it would imply that $s(C) = s(D) \geq 9 > 44/5$. Instead, we have to use another moduli space parameterizing 
curves with some additional structure. Here the stratum is distinguished by spin structures, hence it is natural to consider the 
spin moduli space introduced in Section~\ref{sec:spinmod}. 

Since the same idea will also be applied to the stratum $\omoduli[4](2,2,2)^\odd$, we first consider the general 
case $\omoduli[g](2,\ldots, 2)^\odd$. 
Let $C$ be a \teichm curve  generated by a flat surface $(X, \omega)$ in $\obarmoduli(2,\ldots, 2)^\odd$ such that 
 $\div(\omega) = 2 \sum_{i=1}^{g-1} p_i$ for distinct points $p_i$. Using $\eta = \sum_{i=1}^{g-1} p_i$ 
 as an odd theta characteristic, we can map $\bC$ to $\Sgo$.
\par
Let $\Zg$ be the divisor on $\Sgo$ parameterizing $(X,\eta)$ such that the odd theta characteristic 
$\eta$ satisfies 
$$\eta \sim \OO_X(2p_1+p_2+\cdots+p_{g-2}).$$ 
The class of $\Zg$ was calculated in \cite[Theorem 0.4]{FV}: 
$$ \Zg =  (g+8) \lambda - \frac{g+2}{4}\alpha_0 - 2\beta_0 - \sum\limits_{i=1}^{[g/2]}2(g-i)\alpha_i - \sum_{i=1}^{[g/2]} 2i\beta_i,$$
where $\lambda$ is the pullback of the $\lambda$-class on $\Mg$ and 
$\alpha_i, \beta_i$ are two different boundary divisors of $\Sgo$ over $\delta_i$ for each $0\leq i\leq [g/2]$. Since 
\teichm curves do not intersect $\delta_i$ for $i > 0$, we focus on $\delta_0$ and its inverse images $\alpha_0, \beta_0$ only. 
By definition \cite[Section 1.2]{FV}, a spin curve $Y$ in $\beta_0$ possesses an exceptional component $E$, i.e. a rational curve that meets the rest of $Y$ at two nodes, and the theta characteristic $\eta$ has degree $1$ restricted to $E$. In particular, $Y$ cannot be parameterized in the boundary of a \teichm curve, since non of the zeros $p_i$ will lie on $E$ due to the fact that $\omega_Y|_E$ has degree $0$ and Corollary~\ref{cor:zerosatinf}. 
We thus conclude the following. 
\begin{lemma}
\label{lem:spindisjoint}
In the above setting, $\bC$ does not intersect any boundary components of $\Sgo$ except $\alpha_0$. 
\end{lemma}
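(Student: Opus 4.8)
The plan is to dispose of all boundary components of $\Sgo$ except $\alpha_0$ in two stages: first the components lying over $\delta_i$ for $i>0$, then $\beta_0$. For the first stage I would simply observe that each of $\alpha_i$ and $\beta_i$ with $i>0$ maps under $\pi:\Sgo\to\Mg$ into $\delta_i$, which $\bC$ does not meet by Corollary~\ref{cor:zerosatinf}. Hence the image of $\bC$ in $\Sgo$ is automatically disjoint from $\alpha_i$ and $\beta_i$ for every $i>0$, and the whole statement reduces to showing that $\bC$ avoids $\beta_0$.

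For the second stage I would analyze the limiting spin structure at a cusp. By Corollary~\ref{cor:zerosatinf}, over any point of $\bC\cap\delta_0$ the stable fiber $X_\infty$ has no separating node, the one-form extends to a stable one-form $\omega_\infty$ with the same signature $(2,\ldots,2)$, and its double zeros $p_i$ extend to sections $S_i$ that avoid the nodes. Consequently the theta characteristic extends as the restriction of $\OO_{\bcX}(\sum_i S_i)$ to $X_\infty$, and since $\div(\omega_\infty)=2\sum_i p_i$ this restriction is a genuine square root of $\omega_{X_\infty}$ carried directly by the stable curve, with no blow-up at the node. In the language of \cite{FV} this is precisely an $\alpha_0$-type limit.

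It then remains to contradict membership in $\beta_0$. By definition a point of $\beta_0$ carries an exceptional rational component $E$ meeting the rest of the curve at two nodes, on which the theta characteristic restricts to degree one. On such an $E$ one computes $\deg(\omega_\infty|_E)=2g(E)-2+\delta_E=0$, so by Corollary~\ref{cor:zerosatinf} none of the zeros $p_i$ can lie on $E$; hence the extended theta characteristic $\OO(\sum_i p_i)$ restricts to degree zero on $E$, contradicting the degree-one condition defining $\beta_0$. Therefore $\bC$ meets neither $\beta_0$ nor any $\alpha_i,\beta_i$ with $i>0$, which is the claim.

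I expect the only delicate point to be the assertion in the second paragraph that the limit of the family of theta characteristics over the \teichm curve is the \emph{direct} extension $\OO_{\bcX}(\sum_i S_i)|_{X_\infty}$, rather than one requiring an exceptional component inserted at the node. Verifying this rests on knowing that the sections $S_i$ are disjoint from the nodes of $X_\infty$, which Corollary~\ref{cor:zerosatinf} supplies, combined with the explicit description of Cornalba's compactified spin moduli space recalled in \cite{FV}; once that identification is in place, the degree computation on $E$ closes the argument immediately.
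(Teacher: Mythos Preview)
Your proposal is correct and follows essentially the same approach as the paper: dispose of $\alpha_i,\beta_i$ for $i>0$ via Corollary~\ref{cor:zerosatinf}, and rule out $\beta_0$ by the degree computation on the exceptional component $E$ (namely $\deg(\omega_{X_\infty}|_E)=0$ forces the zeros $p_i$ off $E$, so the limiting theta characteristic has degree $0$ there rather than $1$). Your second paragraph, identifying the limiting spin structure as the direct extension $\OO_{\bcX}(\sum_i S_i)|_{X_\infty}$ without an inserted exceptional component, is a point the paper leaves implicit, so your version is if anything slightly more careful.
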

\par 
We are interested in the case when an odd theta characteristic does not have extra sections. 
\begin{prop}
\label{prop:spinslope}
In the above setting, suppose every odd theta characteristic $\eta$ parameterized in $\mathcal{S}_g^{-}$ union $\alpha_0$ 
satisfies $h^0(\eta) = 1$. Then the slope of $C$ is 
$$s(C) = \frac{4(g+8)}{g+2}. $$
\end{prop}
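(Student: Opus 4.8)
The plan is to transport the computation of the slope $s(C) = (\bC\cdot\delta_0)/(\bC\cdot\lambda)$ from $\Mg$ to the odd spin moduli space $\Sgo$, where the explicit divisor $\Zg$ is at our disposal. Writing $\bC$ also for the lifted curve in $\Sgo$ obtained via the theta characteristic $\eta = \OO_X(\sum_{i=1}^{g-1} p_i)$, I would first note that this lift is a genuine curve and is not contracted by the natural map $f$: this is exactly where the hypothesis $h^0(\eta)=1$ enters, since $f$ contracts precisely the locus where $h^0(\eta)>1$. The heart of the argument is then to prove the disjointness $\bC\cdot\Zg = 0$, after which the slope drops out of the class formula for $\Zg$ by a purely numerical manipulation.

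To establish $\bC\cdot\Zg = 0$ I would argue that $\bC$ meets $\Zg$ nowhere, so that a curve and an effective divisor with $\bC\not\subset\Zg$ are forced to have intersection number zero. By definition a point $(X,\eta)\in\Zg$ is one where the section of the odd theta characteristic vanishes to order at least two somewhere, i.e.\ $\eta\sim\OO_X(2p_1+p_2+\cdots+p_{g-2})$. Along $\bC$, however, the generating differential lies in the stratum $\omoduli(2,\ldots,2)$, so $\div(\omega) = 2\sum_{i=1}^{g-1}p_i$ with the $p_i$ pairwise distinct, and hence the section of $\eta$ cutting out $\sum p_i$ vanishes only simply. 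By Proposition~\ref{prop:Teichbddisj} the zeros never merge, and by Corollary~\ref{cor:zerosatinf} the signature $(2,\ldots,2)$ of $\omega_\infty$ is preserved at every cusp, so the distinctness of the $p_i$, and therefore the simplicity of the vanishing, persists over all of $\bC$ including the boundary. Invoking the standing hypothesis $h^0(\eta)=1$ on $\mathcal{S}_g^{-}\cup\alpha_0$ guarantees that this simply-vanishing section is the \emph{only} section, so $(X,\eta)$ genuinely avoids $\Zg$ throughout.

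With disjointness in hand I would extract the slope as follows. By Lemma~\ref{lem:spindisjoint} the curve $\bC$ meets no boundary divisor of $\Sgo$ other than $\alpha_0$, so in the intersection of $\bC$ with the class of $\Zg$ recorded above every $\alpha_i,\beta_i$ term with $i\geq 1$ and the $\beta_0$ term drop out, leaving $0 = \bC\cdot\Zg = (g+8)(\bC\cdot\lambda) - \tfrac{g+2}{4}(\bC\cdot\alpha_0)$. It then remains to return to $\Mg$: since $\lambda$ on $\Sgo$ is the pullback of $\lambda$, and since the standard pullback relation for the finite map $\pi\colon\Sgo\to\Mg$ reads $\pi^*\delta_0 = \alpha_0 + 2\beta_0$, the vanishing $\bC\cdot\beta_0 = 0$ gives $\bC\cdot\alpha_0 = \bC\cdot\pi^*\delta_0$ while $\bC\cdot\lambda = \bC\cdot\pi^*\lambda$. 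Substituting and cancelling the common degree of $\pi|_{\bC}$ yields $s(C) = (\bC\cdot\delta_0)/(\bC\cdot\lambda) = 4(g+8)/(g+2)$, as claimed.

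The main obstacle is the second step, namely controlling the theta characteristic at the cusps. On the interior everything is immediate from the stratum condition, but at the boundary one must rule out that the limiting section acquires a double zero or that $(X_\infty,\eta)$ slips into $\Zg\cap\alpha_0$; this is precisely what the hypothesis $h^0(\eta)=1$ on $\mathcal{S}_g^{-}\cup\alpha_0$ is designed to prevent, and combining it correctly with the boundary statements of Corollary~\ref{cor:zerosatinf} is the delicate point. By contrast, I expect the intersection-theoretic bookkeeping of the last step to be entirely routine once the pullback relation $\pi^*\delta_0 = \alpha_0 + 2\beta_0$ is cited.
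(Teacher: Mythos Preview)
Your proposal is correct and follows essentially the same approach as the paper. The only cosmetic difference is that the paper packages your disjointness argument by observing that, under the hypothesis $h^0(\eta)=1$, the divisor $\Zg$ restricted to $\mathcal{S}_g^{-}\cup\alpha_0$ is identified with the stratum $\obarmoduli(4,2,\ldots,2)^\odd$, so Proposition~\ref{prop:Teichbddisj} applies directly; you unpack the same reasoning by hand (simple zeros plus uniqueness of the section), and the final intersection-theoretic step is identical.
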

\begin{proof}
In $\mathcal{S}_g^{-}$ union $\alpha_0$, $\Zg$ can be identified as the stratum 
$\obarmoduli(4, 2,\ldots, 2)^\odd$, since the unique section of $\eta$ determines 
the zeros of the corresponding Abelian differential. Consequently by 
Proposition~\ref{prop:Teichbddisj}, the image of $\bC$ in $\Sgo$ does not intersect $\Zg$.
Then we have 
$$ 0 = \bC\cdot \Zg = \bC\cdot \Big((g+8) \lambda - \frac{g+2}{4}\alpha_0\Big), $$
since $\bC$ does not intersect any boundary components except $\alpha_0$ by Lemma~\ref{lem:spindisjoint}. Note that 
$\pi^{*}\lambda = \lambda$ and $\pi^{*}\delta_0 = \alpha_0+2\beta_0$, where $\pi: \Sgo\to \barmoduli$ is 
the morphism forgetting the spin structure and stabilizing the curve (see \cite[Section 1.2]{FV}).
By the projection formula we have
$$ 0 = (\pi_{*}\bC)\cdot \Big((g+8)\lambda - \frac{g+2}{4}\delta_0\Big). $$
The desired formula follows right away. 
\end{proof}
We remark that the assumption $h^0(\eta) = 1$ for all $\eta$ is rather strong and seems to hold only in low genus, as a consequence 
of Clifford's theorem (Theorem~\ref{thm:clifford}). 

\begin{proof}[{Proof of Theorem~\ref{thm:g3main}, Case ${\omoduli[3]}(2,2)^\odd$}]
For $g=3$ an odd theta characteristic cannot have three or more
sections by Theorem~\ref{thm:clifford}. Hence Proposition~\ref{prop:spinslope}
applies and we obtain that $s(C) = 44/5$. 
\end{proof}
\par
Note that this argument does not distinguish between hyperelliptic
and non-hyperelliptic curves and for double covers of $\cQ(1,1,-1^6)$ 
the result is in accordance with Theorem~\ref{thm:calchyp}.

\subsection{The stratum $\omoduli[3](2,1,1)$} \label{sec:211}

In the case  $\omoduli[3](2,1,1)$ we have
$$ L_{(2,1,1)} = 11/6, \quad  s_{(2,1,1)} = 98/11, \quad c_{(2,1,1)} = 49/36. $$
\par
Since $s_{(2,1,1)} = 98/11 < 9$, by the same reason as in the preceding section, 
one cannot verify this non-varying slope by disjointness with an effective divisor on $\barmoduli[3]$. This time the flat surfaces
have three zeros, hence it is natural to consider the moduli space of pointed curves by marking some of the zeros. Consequently we will seek 
certain pointed Brill-Noether divisors to perform the disjointness argument. 
\par
First we lift a \teichm curve generated by a flat surface
in this stratum to $\barmoduli[{3,2}]$ by marking the double zero $p$ as
the first point and one of the simple zeros $q$ as the second point. In fact, 
we can do so after passing to a double covering of $C$ where the simple zeros
$q$ and $r$ can be distinguished. This double covering is unramified, 
since by definition of a \teichm curve the zeros never collide.
Moreover, the slope and hence the sum of Lyapunov exponents are unchanged
by passing to an unramified double covering. For simplicity we will continue to call $C$ the \teichm curve we work with.
\par 
\begin{prop}
In the above setup, the \teichm curve $\bC$ does not intersect the pointed
Brill-Noether divisor $BN^1_{3, (1,2)}$ on $\barmoduli[{3,2}]$. 
\end{prop}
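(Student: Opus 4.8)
The plan is to translate membership in $BN^1_{3,(1,2)}$ into a single linear-equivalence condition and then exploit the canonical model. Write $p$ for the double zero and $q$ for the marked simple zero, and let $r$ be the remaining simple zero, so that $\div(\omega)=2p+q+r$ and hence $\OO_X(2p+q+r)\cong\omega_X$. A line bundle $\cL$ realizing the divisor condition has $\deg\cL=3$ and $\deg(\cL(-p-2q))=0$, so $h^0(\cL(-p-2q))\ge 1$ forces $\cL\cong\OO_X(p+2q)$; thus $(X,p,q)\in BN^1_{3,(1,2)}$ if and only if $h^0(\OO_X(p+2q))\ge 2$. By Riemann--Roch and Serre duality (valid on stable curves with the dualizing sheaf in place of $K_X$), together with $\omega_X\cong\OO_X(2p+q+r)$, this is equivalent to $h^0(\OO_X(p+r-q))\ge 1$, i.e.\ to the existence of a point $s$ with $q+s\sim p+r$; adding $p+q$, it says exactly that the canonical system contains a divisor of the form $p+2q+s$ alongside $2p+q+r$. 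The entire proof consists in ruling out such an $s$.

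For the smooth fibers I would argue through the canonical map $\phi$. A non-hyperelliptic $X$ embeds as a plane quartic, and both $2p+q+r$ and $p+2q+s$ are line sections through the distinct points $\phi(p),\phi(q)$; as the line through two distinct points of $\PP^2$ is unique, the two sections coincide, forcing $\{p,r\}=\{q,s\}$ and hence $q\in\{p,r\}$, contradicting distinctness of the zeros. If instead $X$ is hyperelliptic, the involution $\iota$ fixes the unique double zero $p$ (so the fiber of the $g^1_2$ through $p$ is $2p$) and interchanges $q$ and $r$; since every canonical divisor is a union of two fibers of the $g^1_2$, any canonical divisor containing $p$ contains it with even multiplicity, which is incompatible with the shape $p+2q+s$ carrying $p$ to multiplicity one. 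Either way no admissible $s$ exists, so the interior of $\bC$ is disjoint from $BN^1_{3,(1,2)}$.

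The boundary is the main obstacle and relies on the structural results of Section~\ref{sec:propTeich}. By Corollary~\ref{cor:zerosatinf} a cusp $X_\infty$ is a stable genus-three curve with no separating nodes carrying a dualizing section $\omega_\infty$ with zeros $2p+q+r$ at distinct smooth points, so the reduction above applies verbatim with $\omega_{X_\infty}\cong\OO_{X_\infty}(2p+q+r)$. Since the absence of separating nodes makes the dual graph two-connected, Proposition~\ref{prop:dualgraph} guarantees that $|\omega_{X_\infty}|$ is base-point free, so a canonical map $\phi\colon X_\infty\to\PP^2$ is defined in every case. When $X_\infty$ is not in the hyperelliptic closure, $\phi$ is birational onto a plane quartic and I would repeat the unique-line argument; when $X_\infty$ lies in the hyperelliptic closure (which, as the remark preceding the statement shows, really does occur for non-hyperelliptic generating surfaces), $\phi$ is the degree-two admissible cover of a conic, $p$ is forced to be a ramification point by matching $2p+q+r$ with a union of two fibers, and the even-multiplicity obstruction again excludes $p+2q+s$. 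Throughout, the generalized Clifford theorem (Theorem~\ref{thm:clifford}(iii)), applicable because $X_\infty$ has no separating nodes and $\deg(p+2q)=3\le 4$, keeps $h^0(\OO_{X_\infty}(p+2q))$ bounded and confines the discussion to these equality configurations.

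I anticipate the delicate points to be two degenerate boundary configurations. First, the case where $\phi$ identifies $\phi(p)$ with $\phi(q)$, so the image quartic is singular there and the line argument must be replaced by a local intersection-multiplicity computation exploiting that $\omega_\infty$ vanishes to orders $2$ and $1$ at $p$ and $q$. Second, the hyperelliptic limits, where \emph{fiber}, \emph{ramification point} and \emph{canonical divisor} must be read through the admissible double cover rather than on a smooth curve, and where reducible $X_\infty$ has to be checked component by component. Finally one must confirm that failure of the line-bundle condition on $X_\infty$ genuinely places $X_\infty$ outside the closure $\overline{BN^1_{3,(1,2)}}$; this follows from the description of this Brill--Noether divisor as a degeneracy locus governed by line bundles (torsion-free rank-one sheaves) on curves without separating nodes.
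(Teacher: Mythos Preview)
Your reduction to the existence of $s$ with $p+2q+s\in|\omega_X|$ is correct, and the smooth-fiber arguments (plane-quartic line through $\phi(p),\phi(q)$; parity of $p$ in hyperelliptic canonical divisors) are valid. But you stop one Riemann--Roch step short of the paper's key simplification. From $h^0(\OO_X(p+r-q))\ge 1$ and $q\notin\{p,r\}$ one gets directly $h^0(\OO_X(p+r))\ge 2$: a \emph{degree-two} pencil. For smooth $X$ this says $X$ is hyperelliptic with $p,r$ conjugate; since $2p+q+r$ is canonical, $p+q$ must be the residual $g^1_2$-fiber, forcing $q=r$, contradiction. For singular $X$ the degree-two pencil forces $p,r$ onto a common component carrying an involution that swaps them, impossible since $p$ and $r$ are zeros of different orders of $\omega_\infty$. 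This bypasses the canonical model entirely.

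By contrast, your route through the canonical map leaves genuine work undone at the boundary --- you flag but do not resolve the cases where $\phi(p)=\phi(q)$ on a singular quartic, and the admissible-cover argument for hyperelliptic limits is only sketched. These are not insurmountable, but the $g^1_2$ shortcut makes them unnecessary: once the problem is pushed down to degree two, the involution it produces must respect the zero orders of $\omega$, and that is the whole contradiction. Your approach trades a one-line numerical argument for a geometric case analysis that, as written, is incomplete.
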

\par
\begin{proof}
Recall that $BN^1_{3, (1,2)}$ parameterizes pointed curves $(X, p, q)$ that possess a $g^1_3$ containing 
$p+2q$ as a section. Suppose that $(X,\omega)$ is in the intersection
of $\bC$ and $BN^1_{3, (1,2)}$. Since $h^0(\OO_X(p+2q)) = 2$ and $p, q, r$ are distinct, we
obtain that $h^0(\OO_X(p+r-q)) = 1$ by Riemann-Roch and then $h^0(\OO_X(p+r)) = 2$.
If $X$ is smooth, then $X$ is hyperelliptic and $p, r$ are conjugate. 
But $\omega_X \sim \OO_{X}(2p+q+r)$, so $p,q$ are also conjugate, contradiction.
For singular $X$ we deduce from $h^0(\OO_X(p+r)) = 2$ that $p$ and $r$ are in
the same component $X_0$ of $X$. This component admits an involution $\phi$
that acts on the set of zeros of $\omega_X|_{X_0}$. But $p$ and $r$ have different orders, so they 
cannot be conjugate under $\phi$, leading to a contradiction. 
\end{proof}
\par
\begin{proof}[{Proof of Theorem~\ref{thm:g3main}, Case ${\omoduli[3]}(2,1,1)$}]
By the proposition and the divisor class of $BN^1_{3, (1,2)}$ in \eqref{eq:BN(3,1,2)},
we obtain that 
$$ \bC\cdot (-\lambda + \omega_{1,\rel} + 3\omega_{2,\rel} ) = 0. $$
Since $\kappa_{(2,1,1)} = 17/36$, using Proposition~\ref{prop:intomega}, we have 
$$ \bC\cdot \omega_{1,\rel} =  \frac{\bC\cdot\lambda - (\bC\cdot\delta)/12}{17/12}, $$
$$ \bC\cdot \omega_{2,\rel} =  \frac{\bC\cdot\lambda - (\bC\cdot\delta)/12}{17/18}. $$ 
Plugging in the above, we obtain that $s(C) = 98/11$ and 
the values of $L(C), c(C)$ follow from \eqref{eq:Lkappac} and \eqref{eq:Lcs}.
\end{proof}
\par
\begin{rem}
Alternatively, the theorem can be deduced by showing that for a \teichm
curve $\bC$ its intersection loci with the hyperelliptic locus $H$ in $\barmoduli[3]$
and with the Weierstrass divisor $W$ in $\barmoduli[3, 1]$ are the same.
We show that these intersections are set-theoretically equal. A complete
proof via this method would need to verify that the intersection multiplicities
with the two divisors coincide.
\par
Hyperelliptic flat surfaces
in this stratum are obtained as coverings from the stratum $\cQ(2,1,-1^7)$ and we deduce from 
Theorem~\ref{thm:calchyp} that $L^\hyp_{(2,1,1)} = 11/6$. 
Hence we may assume that $\bC$ is not entirely in the hyperelliptic locus. 
If $\bC$ intersects $W$ at a point $(X, 2p, q, r)$, where $p$ is the marked point, 
then $p$ is a Weierstrass point. Hence we have $ 2p + q + r \sim  3p + s$ for some $s$ in $X$. Consequently $q+r \sim p + s$  and $X$ 
must be hyperelliptic. On the other hand, suppose $\bC$ intersects $H$ 
at a point $(X, 2p, q, r)$. Since $X$ is hyperelliptic, $p$ must be a 
Weierstrass point. 
\end{rem}
\par
\begin{cor}
A \teichm curve generated by a non-hyperelliptic flat surface 
in $\omoduli[3](2,1,1)$ does intersect the hyperelliptic locus $H$ at the boundary.
\end{cor}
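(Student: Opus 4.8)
The plan is to produce the intersection with $H$ by a positivity computation on the lift of $\bC$ to $\barmoduli[{3,1}]$ obtained by marking the double zero $p$, and then to transfer it to $H$ via the set-theoretic equality of the intersection loci $\bC\cap W$ and $\bC\cap H$ recorded in the remark above. Write $a = \bC\cdot\lambda$, which is positive for a \teichm curve.

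First I would confirm that $\bC$ is not contained in the Weierstrass divisor $W$, so that $\bC\cdot W$ counts honest intersection points. Hyperellipticity is $\SL_2(\RR)$-invariant, since the hyperelliptic involution acts as the central element $-\Id$ (cf.\ the discussion before Proposition~\ref{prop:Teichhypdisj}); hence the assumption that the generating surface is non-hyperelliptic makes every \emph{smooth} fiber of $\bC$ non-hyperelliptic. By the remark, a smooth fiber $(X,2p,q,r)$ with $p$ a Weierstrass point would be hyperelliptic, so on the non-hyperelliptic smooth fibers $p$ is not a Weierstrass point and the generic point of $\bC$ avoids $W$.

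Next I would compute $\bC\cdot W$ from the class $W = 6\omega_\rel - \lambda - \delta_\other$ in \eqref{eq:classofW}, where $\omega_\rel = \omega_{1,\rel}$ is taken for the section cut out by the order-$2$ zero. Since $\bC$ meets no boundary divisor besides $\delta_0$ (Corollary~\ref{cor:zerosatinf}), we have $\bC\cdot\delta_\other = 0$ and therefore $\bC\cdot W = 6(\bC\cdot\omega_{1,\rel}) - a$. Inserting the slope value $s(C) = 98/11$, i.e.\ $\bC\cdot\delta_0 = \tfrac{98}{11}a$, into the expression for $\bC\cdot\omega_{1,\rel}$ furnished by Proposition~\ref{prop:intomega} (already used in the treatment of $\omoduli[3](2,1,1)$) gives $\bC\cdot\omega_{1,\rel} = \tfrac{2}{11}a$, so that
$$\bC\cdot W = \tfrac{12}{11}a - a = \tfrac{1}{11}a > 0.$$

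Since $\bC\not\subset W$, positivity forces $\bC\cap W\neq\emptyset$. Each such point, viewed in $\barmoduli[3]$ after forgetting the marked points, lies on $H$ by the set-theoretic equality of the remark. As the smooth fibers of $\bC$ are non-hyperelliptic and hence miss $H$, these points all lie over cusps of $C$, which is exactly the asserted boundary intersection. The step I expect to demand the most care is keeping the intersection geometric at the boundary: one must know that $\bC\not\subset W$ (supplied by non-hyperellipticity) and that the correspondence between $\bC\cap W$ and $\bC\cap H$ persists for the stable nodal fibers, which is where the admissible-cover description of $\partial H$ and the structure of degenerate fibers from Corollary~\ref{cor:zerosatinf} enter.
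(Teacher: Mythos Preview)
Your computation is correct and the strategy sound, but you have taken an unnecessary detour. The paper's proof works directly with $H$ rather than passing through $W$: if $\bC$ missed $H$ at the boundary, then (since the smooth fibers are non-hyperelliptic by $\SL_2(\RR)$-invariance) $\bC\cap H=\emptyset$, so $\bC\cdot H=0$ and hence $s(C)=s(H)=9$, contradicting $s(C)=98/11$. Equivalently, one computes directly $\bC\cdot H = 9a - \tfrac{98}{11}a = \tfrac{a}{11}>0$ using the class \eqref{eq:classofH}, and observes that this positive intersection must be supported on cusps. This is the same number $\tfrac{a}{11}$ you obtain via $W$, which is no coincidence in light of the remark, but getting it from $H$ needs nothing beyond the known slope.

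Your route through $W$ introduces exactly the difficulty you flag at the end: you need the implication ``$(X,p)\in\bar W$ and $\omega_X\sim\OO_X(2p+q+r)$ $\Rightarrow$ $X\in\bar H$'' to hold for the \emph{nodal} fibers supporting $\bC\cap W$, and the remark only argues this for smooth $X$ (where ``$p$ Weierstrass'' cleanly means $h^0(\OO_X(3p))\geq 2$). For a stable curve the boundary description of $\bar W$ is not simply that condition, so the step is not free. It can be made to work, but it is extra labor that the direct argument with $H$ avoids entirely. If you simply replace $W$ by $H$ in your own computation, you recover the paper's two-line proof.
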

\par
\begin{proof}
If the statement was false for some \teichm curve $C$, we
would have $\bC\cdot H = 0$, hence $s(C) = s(H) = 9$, contradicting $s(C) = 98/11$.
\end{proof}

\subsection{Varying sum in the stratum $\omoduli[3](1,1,1,1)$}

We show by example that the sum of Lyapunov exponents in the principal stratum
in $g=3$ is varying, even modulo the hyperelliptic locus. In the case
$\omoduli[3](1,1,1,1)$, the algorithm of \cite{emz} to
calculate Siegel-Veech constants for components of strata gives 
$$ L_{(1,1,1,1)} = 53/28, \quad  s_{(1,1,1,1)} = 468/53, \quad c_{(1,1,1,1)} = 39/28. $$
\par
\begin{exs}
The 'eierlegende Wollmilchsau', the square-tiled surface given by the 
permutations $(\pi_r=(1234)(5678), \pi_u = (1836)(2745))$ (see \cite{forni06} and \cite{moellerST}), 
generates  a \teichm curve $C$ with $L(C) = 1$.
\par
The square-tiled surface given by the 
permutations 
$$(\pi_r=(1234)(5)(6789), \pi_u = (1)(2563)(4897))$$  
generates a \teichm curve $C$ with $L(C) = 2$.
It attains the upper bound given by Theorem~\ref {thm:g3main}, but it 
is not hyperelliptic.
\par
There exist square-tiled surfaces in this stratum whose associated \teichm curves $C$ have
$$ L(C) \in \{1, 3/2, 5/3, 7/4, 9/5, 11/6, 19/11, 33/19, 83/46, 544/297\}.$$
\end{exs}
\par
\begin{proof}[{Proof of Theorem~\ref{thm:g3main}, Case ${\omoduli[3]}(1,1,1,1)$}]
\teichm curves in the locus of hyperelliptic flat surfaces (i.e. the image of 
$\cQ(2,2,-1^8)$) in $\omoduli[3](1,1,1,1)$ have $L = 2$ by Theorem~\ref{thm:calchyp}. 
\par
If the \teichm curve $C$ is not contained in the hyperelliptic locus,
then $\bC \cdot H \geq 0$, or equivalently $s(C) \leq s(H) = 9$ by Lemma~\ref{lem:upperbound}. 
Using $\kappa_{(1,1,1,1)} = 1/2$ this implies $L(C) \leq 2$.
\par
For the last statement in the theorem recall that an double
cover of a genus two curve is always hyperelliptic (e.g.\ \cite{FaHyp}).
\end{proof}

\section{Genus four}\label{sec:g=4}

In genus $4$ we summarize the non-varying sums of Lyapunov exponents and upper bounds for varying sums 
in Table~\ref{cap:tabg4}. 

\begin{figure}
    \centering      
$$
\begin{array}{|c|c||c|c|c|c|c|}

\hline
&&\multicolumn{5}{|c|}{}\\

\multicolumn{1}{|c|}{\text{Degrees}}&
\multicolumn{1}{|c||}{\text{Hyperelliptic}}&
\multicolumn{5}{|c|}{\text{Lyapunov exponents}}\\

\multicolumn{1}{|c|}{\text{of }}&
\multicolumn{1}{|c||}{\text{or spin}}&
\multicolumn{5}{|c|}{\text{}}\\

\cline{3-7}

\multicolumn{1}{|c|}{\text{zeros}}&
\multicolumn{1}{|c||}{\text{structure}}&
\multicolumn{2}{|c|}{\text{}}&
\multicolumn{3}{|c|}{\text{}}\\

\multicolumn{1}{|c|}{(d_1,\dots,d_n)}&
\multicolumn{1}{|c||}{}&
\multicolumn{2}{|c|}{\text{Component}}&
\multicolumn{3}{|c|}{\text{\teichm curves}}\\

\multicolumn{1}{|c|}{\text{}}&
\multicolumn{1}{|c||}{\text{}}&
\multicolumn{2}{|c|}{\text{}}&
\multicolumn{3}{|c|}{\text{}}\\

\cline{3-7}

\multicolumn{1}{|c|}{\text{}}&
\multicolumn{1}{|c||}{\text{}}&
\multicolumn{1}{|c|}{\approx}&
\multicolumn{1}{|c|}{\overset{g}{\underset{j=1}{\sum}} \lambda_j} &
\multicolumn{1}{|c|}{\approx}&
\multicolumn{1}{|c|}{\overset{g}{\underset{j=1}{\sum}} \lambda_j}&
\multicolumn{1}{|c|}{\text{Reference}}\\
[-\halfbls] &&&&&& \\
\hline &&&&\multicolumn{2}{|c|}{}& \\ [-\halfbls]
(6) & \text{hyperelliptic}  & 2.28571 & \frac{16}{7} &
\multicolumn{2}{|c|}{\text{Non-varying}} & \text{Thm.~\ref{thm:calchyp}}  \\
[-\halfbls] &&&&\multicolumn{2}{|c|}{}&\\
\hline &&&&\multicolumn{2}{|c|}{}& \\ [-\halfbls]
(6) & \text{even}  & 2.00000 & 2
& \multicolumn{2}{|c|}{\text{Non-varying}} & \text{Sec.~\ref{sec:6even}}\\ 
[-\halfbls] &&&&\multicolumn{2}{|c|}{}&\\
\hline &&&&\multicolumn{2}{|c|}{}& \\ [-\halfbls]
(6) & \text{odd} &  1.85714  & \frac{13}{7} 
& \multicolumn{2}{|c|}{\text{Non-varying}} & \text{Sec.~\ref{sec:6odd}}\\ 
[-\halfbls] &&&&\multicolumn{2}{|c|}{}&\\
\hline &&&&\multicolumn{2}{|c|}{}& \\ [-\halfbls]
(5, 1) & - & 2.00000 & 2
& \multicolumn{2}{|c|}{\text{Non-varying}} & \text{Sec.~\ref{sec:51}}\\ 
[-\halfbls] &&&&\multicolumn{2}{|c|}{}&\\
\hline &&&&\multicolumn{2}{|c|}{}& \\ [-\halfbls]
(4,2) & \text{even} & 2.13333 & \frac{32}{15}  
& \multicolumn{2}{|c|}{\text{Non-varying\ ?}} & - \\ 
[-\halfbls] &&&&\multicolumn{2}{|c|}{}&\\
\hline &&&&\multicolumn{2}{|c|}{}& \\ [-\halfbls]
(4,2) & \text{odd} & 1.93333 & \frac{29}{15}  
& \multicolumn{2}{|c|}{\text{Non-varying\ ?}} & - \\ 
[-\halfbls] &&&&\multicolumn{2}{|c|}{}&\\
\hline &&&&\multicolumn{2}{|c|}{}& \\ [-\halfbls]
(3,3) & \text{hyperelliptic} & 2.50000 & \frac{5}{2} &
\multicolumn{2}{|c|}{\text{Non-varying}} & \text{Thm.~\ref{thm:calchyp}}  \\
[-\halfbls] &&&&\multicolumn{2}{|c|}{}&\\
\hline &&&&\multicolumn{2}{|c|}{}& \\ [-\halfbls]
(3,3) & \text{non-hyp} &  2.00000 & 2
& \multicolumn{2}{|c|}{\text{Non-varying}} & \text{Sec.~\ref{sec:33nh}}\\ 
[-\halfbls] &&&&\multicolumn{2}{|c|}{}&\\
\hline &&&&\multicolumn{2}{|c|}{}& \\ [-\halfbls]
(3,2,1) & - &  2.08333 & \frac{25}{12}
& \multicolumn{2}{|c|}{\text{Non-varying}} & \text{Sec.~\ref{sec:321}}\\ 
[-\halfbls] &&&&\multicolumn{2}{|c|}{}&\\
\hline &&&&\multicolumn{2}{|c|}{}& \\ [-\halfbls]
(2, 2, 2) & \text{odd} & 2.00000 & 2 
& \multicolumn{2}{|c|}{\text{Non-varying}} & \text{Sec.~\ref{sec:222odd}}  \\
[-\halfbls] &&&&\multicolumn{2}{|c|}{}&\\
\hline &&&&&& \\ [-\halfbls]
(2, 2, 2) & \text{even} & 2.28571 & \frac{166}{75} & 2.333333 & \frac{7}{3} & \cQ(3,1,-1^8)\\
[-\halfbls] &&&&&&\\
\hline &&&&&& \\ [-\halfbls]
(4, 1, 1) & - & 2.06727 & \frac{1137}{550} & 1.96792 & \frac{1043}{530} & \text{Eq.\ \eqref{eq:411}} \\
[-\halfbls] &&&&&&\\
\hline &&&&&& \\ [-\halfbls]
(2^2, 1^2) & - &  2.13952 & \frac{5045}{2358} & 1.91666 & \frac{23}{12} & \cQ(2,1,1, -1^7)\\
[-\halfbls] &&&&&&\\
\hline &&&&&& \\ [-\halfbls]
(3,1^3) & - & 2.12903 & \frac{66}{31} & 2.11523 & \frac{514}{243}  &  \text{Eq.\ \eqref{eq:3111}}\\
[-\halfbls] &&&&&&\\
\hline &&&&&& \\ [-\halfbls]
(2, 1^4) & - &  2.18333 & \frac{131}{60} & 2.80000 & \frac{14}{5} & \cQ(3,2,2,-1^{11})\\
[-\halfbls] &&&&&&\\
\hline &&&&&& \\ [-\halfbls]
(1^6) & - &  2.22546 & \frac{839}{377} & 2.50000 & \frac{5}{2} & \cQ(2,2,2,-1^{10})\\
[-\halfbls] &&&&&&\\ \hline
\end{array}
$$
\caption{Varying and non-varying sums in genus four} \label{cap:tabg4}
\end{figure}


\subsection{The stratum $\omoduli[4](6)^\even$} \label{sec:6even}

In the case  $\omoduli[4](6)^\even$, we have
$$ L_{(6)^\even} = 14/7 , \quad  s_{(6)^\even} = 60/7, \quad c_{(6)^\even} = 10/7. $$
\par
\begin{prop} \label{prop:intthetanull}
Let $C$ be a \teichm curve generated by $(X,\omega) \in \omoduli[4](6)^\even$,
lifted to $\barmoduli[{4,1}]$ using the zero of $\omega$. Then $\bC$
does not intersect the theta-null divisor $\Theta$ in $\barmoduli[{4,1}]$. 
\end{prop}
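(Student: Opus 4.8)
The plan is to verify the disjointness fiberwise over the closed curve $\bC$. By Corollary~\ref{cor:degfibers} every degenerate fiber over $\bC$ is irreducible, by Corollary~\ref{cor:zerosatinf} it carries $\omega_\infty$ with a single zero of order $6$ at a smooth point $p$ and has no separating nodes, and by Proposition~\ref{prop:Teichhypdisj} (applied with $\mu=(6)$, since we are in the non-hyperelliptic even component) no fiber is hyperelliptic. Thus I am reduced to proving, for every (smooth or irreducible stable) fiber $X$ of $\bC$ with marked zero $p$: writing $\omega_X\sim\OO_X(6p)$ so that $\OO_X(3p)$ is a theta characteristic, the point $p$ lies in the support of no \emph{odd} theta characteristic of $X$. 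Lifting $\bC$ to $\Sge$ via $(X,\omega)\mapsto(X,\OO_X(3p))$ shows that $\OO_X(3p)$ is \emph{even} on every fiber, and this evenness is the hypothesis I will exploit.

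Next I record the relevant dimensions of sections. Since $X$ is non-hyperelliptic it has no $g^1_2$, so $h^0(\OO_X(2p))=1$; Riemann--Roch together with $\omega_X(-4p)\cong\OO_X(2p)$ then gives $h^0(\OO_X(4p))=2$. Clifford's theorem (Theorem~\ref{thm:clifford}) bounds any effective theta characteristic by $h^0\le 2$; combined with evenness and $h^0(\OO_X(3p))\ge 1$ this forces $h^0(\OO_X(3p))=2$, so $|3p|$ is a base-point-free $g^1_3$ having $p$ as a total ramification point. Finally, because $h^0(\OO_X(3p))=h^0(\OO_X(4p))=2$, Proposition~\ref{bpf} shows that $p$ is a base point of $|4p|$; multiplication by the section cutting out $p$ identifies $|3p|$ with $|4p|$, so every divisor of $|4p|$ has the form $p+F$ with $F\in|3p|$.

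Now suppose, for contradiction, that some odd theta characteristic $\eta'$ has $p$ in its support. By Clifford $h^0(\eta')=1$, so $\eta'$ has a unique effective divisor $D$ of degree $3$ with $p\in\operatorname{supp}(D)$ and $2D\sim\omega_X\sim 6p$. Write $D=p+E$ with $E\ge 0$ of degree $2$; then $2E\sim 4p$, so $2E\in|4p|$ and hence $2E=p+F$ for some $F\in|3p|$. Comparing the coefficient of the smooth point $p$ on both sides, the left side equals $2\operatorname{ord}_p(E)$ and is even, while the right side is $1+\operatorname{ord}_p(F)$; hence $\operatorname{ord}_p(F)$ is odd, so $p\in\operatorname{supp}(F)$. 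But the only divisor of the $g^1_3$ $|3p|$ passing through $p$ is $3p$ itself, because $h^0(\OO_X(2p))=1$ forces the divisors of $|3p|$ through $p$ to be exactly $p+|2p|=\{3p\}$. Therefore $F=3p$, whence $E=2p$ and $D=3p$, making $\eta'=\OO_X(3p)$ an \emph{even} theta characteristic and contradicting that $\eta'$ is odd. This proves $\bC\cap\Theta=\emptyset$.

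The interior of $\bC$ is covered immediately by this argument on smooth curves; the main obstacle is to run it over the boundary, where $X$ is a singular stable curve and theta characteristics are honest line bundles only after one excludes spin structures carried by exceptional components. Since the boundary fibers are irreducible with $p$ a smooth point, such exceptional structures cannot occur (no zero of $\omega_\infty$ lies on a rational bridge, cf.\ Corollary~\ref{cor:zerosatinf}), so $\OO_X(3p)$ is a genuine line bundle and the numerical inputs persist: Riemann--Roch holds with the dualizing sheaf, and the generalized Clifford bound applies through case (iii) of Theorem~\ref{thm:clifford} (no separating nodes, degree $\le 4$, and, $X$ being irreducible, automatically balanced). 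The one point needing care is the evenness of $\OO_X(3p)$ at the cusps, which I obtain from the properness of $\Sge\to\barmoduli$ and the deformation invariance of the parity along the connected family $\bC$. With these in hand the displayed computation goes through verbatim on the nodal fibers, completing the proof.
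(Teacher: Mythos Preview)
Your proof is correct and follows essentially the same approach as the paper's: both use that $\OO_X(3p)$ is an even theta characteristic with $h^0=2$, together with Riemann--Roch, Clifford, and the relation $2D\sim 6p$ coming from $\eta'^{\otimes 2}\cong\omega_X$. The paper argues the contrapositive more directly: writing $D=p+q+r$ with $q,r$ not both equal to $p$, from $2q+2r\sim 4p$ it concludes $p$ is not a base point of $|4p|$, hence $h^0(4p)=1+h^0(3p)\geq 3$, hence $h^0(2p)\geq 2$, forcing $X$ hyperelliptic. Your route starts from non-hyperellipticity to pin down $h^0(4p)=2$ first and then uses the parity of $\operatorname{ord}_p$ to force $D=3p$; this packaging is slightly longer but handles the subcase $q=p$ (or $r=p$) uniformly, whereas the paper's phrasing implicitly treats that case separately (there $2r\sim 2p$ gives hyperellipticity immediately).
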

\par
\begin{proof}
Recall that the divisor $\Theta\subset \barmoduli[{4,1}]$ parameterizes curves that admit an odd theta 
characteristic whose support contains the marked point. Suppose the stable pointed curve $(X,p)$ lies in 
the intersection of $\bC$ and $\Theta$. Then there exists an odd theta characteristic $\eta$ on $X$ with a section 
$t \in H^0(\eta)$ such that 
$\div(t)= p + q + r$ for some $q,r$ not both equal to $p$. Denote by $\cL = \OO_X(3p)$ the line bundle corresponding 
to the even theta characteristic with a section $s \in H^0(\cL)$ given by $3p$. Since $\eta^{\otimes 2}\sim \omega_X\sim \cL^{\otimes 2}$, 
the function $s^2t^{-2}$ implies that $4p\sim 2q + 2r$ on $X$,  hence $p$ is not a base point of $|\cL(p)| = |\OO_X(4p)|$. 
Consequently we have $h^0(\cL(p)) = 1 + h^0(\cL) \geq 3$. By $\omega_X\sim \OO_X(6p)$ and Riemann-Roch, 
$h^0(\OO_X(2p)) = h^0(\OO_X(4p)) -1 \geq 2$. Since $X$ is irreducible by
Corollary~\ref{cor:zerosatinf}, this implies that $X$ is hyperelliptic and $p$ is a Weierstrass point. 
It contradicts the disjointness of the hyperelliptic locus and this component
in $\barmoduli[4]$ by Proposition~\ref{prop:Teichhypdisj}.
\end{proof}
\par
\begin{proof}[{Proof of Theorem~\ref{thm:g4main}, Case ${\omoduli[4]}(6)^\even$}]
Using the proposition and the class of the theta-null divisor $\Theta$ in \eqref{eq:thetanull}, we obtain that
$$ \bC \cdot (30\lambda + 60\omega_\rel - 4\delta_0) = 0.$$
Using Proposition~\ref{prop:intomega} we know
$$ \bC\cdot \omega_\rel = \frac{\bC\cdot \lambda - (\bC\cdot \delta)/12}{4}.$$
It now suffices to plug this in and use \eqref{eq:Lkappac} and \eqref{eq:Lcs}.
\end{proof}


\subsection{The stratum $\omoduli[4](6)^\odd$} \label{sec:6odd}

In the case  $\omoduli[4](6)^\odd$, we have
$$ L_{(6)^\odd} = 13/7, \quad  s_{(6)^\odd} = 108/13, \quad c_{(6)^\odd} = 9/7. $$
\par
\begin{prop}
Let $C$ be a \teichm curve generated by $(X,\omega)\in \omoduli[4](6)^\odd$ lifted to $\barmoduli[{4,1}]$ using the zero of $\omega$. Then 
$C$ does not intersect the pointed Brill-Noether divisor $BN^1_{3,(2)}$. 
\end{prop}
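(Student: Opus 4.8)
The plan is to assume that a point $(X,p)$ lies in $\bC \cap BN^1_{3,(2)}$ and to derive that the odd theta characteristic $\OO_X(3p)$ acquires an extra section, which is impossible. Recall that for a generating surface in $\omoduli[4](6)^\odd$ one has $\div(\omega)=6p$, so that $\OO_X(3p)$ is a theta characteristic of odd parity; moreover, by Proposition~\ref{prop:Teichhypdisj}, neither $X$ nor any degenerate fiber over $\bC$ is hyperelliptic. Unwinding the definition of the divisor, $(X,p)\in BN^1_{3,(2)}$ means there is a line bundle $\cL$ of degree $3$ with $h^0(\cL)\ge 2$ and $h^0(\cL(-2p))\ge 1$; since $\cL(-2p)$ then has a section, $\cL\cong\OO_X(2p+s)$ for some point $s$, and the Brill-Noether condition becomes $h^0(\OO_X(2p+s))\ge 2$.

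The core of the argument is a Riemann-Roch manipulation. Because $\deg(2p+s)=g-1=3$ and $\omega_X\sim\OO_X(6p)$, Serre duality gives $h^0(\OO_X(4p-s))=h^0(\OO_X(2p+s))\ge 2$, where $4p-s=K_X-(2p+s)$. Removing the point $p$ drops $h^0$ by at most one, so $h^0(\OO_X(3p-s))\ge 1$; that is, there is an effective divisor $E$ of degree $2$ with $s+E\sim 3p$. If $s+E\ne 3p$ as divisors, then $3p$ and $s+E$ are two distinct members of $|3p|$, whence $h^0(\OO_X(3p))\ge 2$; in the remaining case $s+E=3p$ forces $s=p$, so that $\OO_X(2p+s)=\OO_X(3p)$ already has $h^0\ge 2$. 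Either way $h^0(\OO_X(3p))\ge 2$. But Clifford's theorem (Theorem~\ref{thm:clifford}) applied to the degree-$3$ divisor $3p$ gives $h^0(\OO_X(3p))\le 2$, and the odd parity rules out the even value $2$; hence $h^0(\OO_X(3p))=1$, a contradiction. This settles the interior, and note that the special role of the odd spin structure is exactly what forbids the value $h^0=2$ that Clifford alone would allow.

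It remains to rule out the cusps. By Corollary~\ref{cor:degfibers} every degenerate fiber $X_\infty$ over $\bC$ is irreducible, and by Corollary~\ref{cor:zerosatinf} it carries $\omega_{X_\infty}\sim\OO(6p)$ and has no separating nodes; the same computation runs verbatim with the dualizing sheaf in place of the canonical bundle, since Riemann-Roch and Serre duality hold for $\omega_{X_\infty}$. The inputs that require care are precisely those of the interior step, transported to the nodal curve: the bound $h^0(\OO(3p))\le 2$ now comes from case (iii) of Theorem~\ref{thm:clifford} (the divisor $3p$ has degree $3\le 4$, is supported at the smooth point $p$, is balanced on the single component, and $X_\infty$ has no separating nodes), while one must also know that the limiting theta characteristic $\OO(3p)$ retains odd parity, so that $h^0$ is again forced to equal $1$ rather than $2$. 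I expect this boundary bookkeeping -- checking that the Brill-Noether condition is realised by an honest line bundle on the irreducible nodal fiber and that the parity is preserved under the degeneration into $\Sgo$ -- to be the main obstacle, the interior estimate itself being only a short Riemann-Roch computation.
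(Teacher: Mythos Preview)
Your argument is correct and follows essentially the same route as the paper's proof: from $h^0(\OO_X(2p+s))\ge 2$ one obtains $h^0(\OO_X(4p-s))\ge 2$ by Riemann--Roch, hence $h^0(\OO_X(3p-s))\ge 1$, and this forces $h^0(\OO_X(3p))\ge 2$, contradicting Clifford together with the odd parity of the theta characteristic $\OO_X(3p)$. Your treatment of the boundary and of the case $s=p$ is somewhat more explicit than the paper's, but the ingredients (irreducibility from Corollary~\ref{cor:degfibers}, Clifford on the nodal fiber, deformation invariance of spin parity) are the same.
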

\par
\begin{proof}
Recall that $BN^1_{3,(2)}\subset \barmoduli[{4,1}]$ parameterizes curves that admit a linear series $g^1_3$ with 
a section containing $2p$, where $p$ is the marked point. Suppose that $\bC$ intersects 
$BN^1_{3,(2)}$ at $(X, p)$. Let $\eta = \OO_X(3p)$ denote the theta characteristic given by $3p$.
Since $h^0(\eta)$ is odd,  Clifford's theorem implies that $h^0 (\eta) = 1$. 
Since $(X, p)$ is contained in $BN^1_{3,(2)}$, we have $h^0(\OO_X(2p+q)) = 2$ for
some $q$ different from $p$. By $\omega_X \sim \OO_X(6p)$ and Riemann-Roch,
we have $h^0(\OO_X(4p-q)) = 2$, hence $h^0(\OO_X(3p-q))\geq 1$. 
Note that $q$ is not a base point of the linear system $|\OO_X(3p)|$ for $q\neq p$. Consequently we have 
$h^0(\eta) =  1+ h^0(\OO_X(3p-q)) \geq 2$, contradicting that $h^0 (\eta) = 1$. 
\end{proof}
\par
\begin{proof}[{Proof of Theorem~\ref{thm:g4main}, Case ${\omoduli[4]}(6)^\odd$}]
Recall the divisor class of $BN^1_{3,(2)}$ in \eqref{BNp}. By $\bC\cdot BN^1_{3,(2)} = 0$, we have 
$$ \bC \cdot(4\omega_\rel + 8\lambda - \delta_0)=0.$$
It now suffices to use Proposition~\ref{prop:intomega} and to plug the
result in \eqref{eq:Lkappac} and \eqref{eq:Lcs}.
\end{proof}

\subsection{The stratum $\omoduli[4](5,1)$} \label{sec:51}
 
In the case  $\omoduli[4](5,1)$, we have
$$ L_{(5,1)} = 2 , \quad  s_{(5,1)} = 25/3, \quad c_{(5,1)} = 25/18. $$
\par
\begin{prop}
Let $C$ be a \teichm curve generated by $(X,\omega) \in \omoduli[4](5,1)$,
lifted to $\barmoduli[{4,1}]$ using the $5$-fold zero of $\omega$. Then $\bC$
does not intersect the pointed Brill-Noether divisor $BN^1_{3,(2)}$. 
\end{prop}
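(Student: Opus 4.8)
The plan is to pull everything back to the canonical model in $\PP^3$ and to show that the $g^1_3$-ramification condition defining $BN^1_{3,(2)}$ is incompatible with the marked point being a zero of order $5$. Write $\div(\omega) = 5p+q$, so that $\omega_X \cong \OO_X(5p+q)$, and mark the $5$-fold zero $p$. First I would note that $\omoduli[4](5,1)$ contains no hyperelliptic flat surfaces: the zeros of a holomorphic one-form on a hyperelliptic curve are either exchanged in pairs by the involution or of even order at a Weierstrass point, and neither is compatible with a single zero of order $5$. Hence $(X,\omega)$ is never hyperelliptic, so by Proposition~\ref{prop:Teichhypdisj} no fiber of $\bC$ — smooth or degenerate — is hyperelliptic. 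By Corollary~\ref{cor:degfibers} (together with the absence of separating nodes from Corollary~\ref{cor:zerosatinf}) the only possible degenerations are irreducible, or a genus-two curve meeting a rational curve in three nodes; in both cases the dual graph has the connectivity required by Proposition~\ref{prop:dualgraph}, so $\omega_X$ is very ample and the (possibly nodal) canonical model $\bX \subset \PP^3$ is a genus-four canonical curve, i.e. the complete intersection of a unique quadric $Q$ with a cubic. Crucially, $p$ is a smooth point of $X$ by Corollary~\ref{cor:zerosatinf}, so the local geometry at $p$ is unaffected by the nodes and the geometric Riemann-Roch theorem applies there.

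Next I would translate the two hypotheses into geometry on $\bX$. The equivalence $\omega_X \cong \OO_X(5p+q)$ says that the hyperplane $H_0$ cutting the canonical divisor $5p+q$ meets $\bX$ with multiplicity $5$ at $p$; in particular $H_0 \supseteq T_p\bX$. Membership of $(X,p)$ in $BN^1_{3,(2)}$ means $X$ carries a $g^1_3$ one of whose members is $2p+s$ for some point $s$. By geometric Riemann-Roch these three points are collinear, and since the scheme $2p$ pins the tangent direction at $p$, their spanning line is exactly $T_p\bX$; as it meets $\bX$ in degree $\geq 3$ while $\deg Q = 2$, Bézout forces $T_p\bX \subset Q$. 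In other words, the tangent line at $p$ is a ruling of $Q$.

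The heart of the argument is then a multiplicity count on $Q$. Because $T_p\bX \subset Q$, the plane conic $H_0 \cap Q$ contains the line $T_p\bX$ and hence degenerates as $T_p\bX \cup \ell'$. Restricting to $\bX \subset Q$ gives $5p+q = \bX\cdot H_0 = \bX\cdot T_p\bX + \bX\cdot \ell'$, where the ruling line $T_p\bX$ meets $\bX$ in a divisor of degree $3$, so its multiplicity at $p$ is at most $3$. Therefore $\ell'$ meets $\bX$ at $p$ with multiplicity at least $5-3 = 2$, which forces $\ell'$ to be tangent to $\bX$ at the smooth point $p$, hence $\ell' = T_p\bX$. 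When $Q$ is smooth this is absurd, since the two lines of a tangent-plane section lie in different rulings and are distinct; when $Q$ is a cone the only remaining possibility is that $H_0 \cap Q = 2\,T_p\bX$, but then $\bX\cdot H_0 = 2(\bX\cdot T_p\bX)$ has all even multiplicities, contradicting the odd multiplicity $5$ at $p$. Either way we reach a contradiction, so $\bC \cap BN^1_{3,(2)} = \emptyset$.

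The main obstacle I anticipate lies in the boundary, not in the smooth case: one must verify that the unified canonical-model description (complete intersection on a single quadric, with geometric Riemann-Roch valid at the smooth point $p$) genuinely persists on the nodal fibers allowed by Corollary~\ref{cor:degfibers}, and that the Brill-Noether condition on such a stable curve is still detected by the tangent line $T_p\bX$. The cone and double-line degenerations of $Q$ are the only places where the argument splits, and it is precisely the parity of the multiplicity $5$ that closes these cases; Clifford's theorem for stable curves (Theorem~\ref{thm:clifford}) is available as a fallback should the canonical geometry require supplementing on the most degenerate fibers.
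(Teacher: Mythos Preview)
Your argument is correct and lands on the same final contradiction as the paper---two distinct rulings of the quadric $Q$ both tangent to $\bX$ at $p$---but it gets there by a genuinely different route. The paper first pushes the $BN^1_{3,(2)}$ hypothesis through Riemann--Roch to manufacture a \emph{second} $g^1_3$: from $h^0(\cO_X(2p+r))\ge 2$ and $\omega_X\sim\cO_X(5p+q)$ it extracts $h^0(\cO_X(3p))\ge 2$, and then also $h^0(\cO_X(2p+q))\ge 2$; two distinct $g^1_3$'s immediately force $Q$ to be smooth, and both corresponding rulings are tangent at $p$. You instead read $\omega_X\sim\cO_X(5p+q)$ directly as an osculating condition, split the hyperplane section $H_0\cap Q=T_p\bX\cup\ell'$, and let the multiplicity $5$ force $\ell'$ to be tangent at $p$. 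Your approach is more geometric and avoids the Riemann--Roch gymnastics; the price is that the cone case does not evaporate automatically and needs your separate parity argument, whereas the paper's two-$g^1_3$ step kills it for free. One point you left implicit in the cone case: the canonical curve misses the vertex (on the resolution $F_2$ one has $\bX\cdot C_0=0$), which is exactly what makes $H_0|_{\bX}=2\,(T_p\bX|_{\bX})$ hold as divisors on $\bX$ and validates the even-multiplicity contradiction.
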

\par
\begin{proof}
Suppose that $(X, \omega)$ is contained in the intersection of $\bC$ 
with $BN^1_{3,(2)}$, where div$(\omega) = 5p+q$ with $p$ the marked point. 
By Proposition~\ref{prop:Teichhypdisj}, $X$ is not hyperelliptic.
For a non-hyperelliptic curve $X$ which is 
either smooth, or nodal irreducible, or consisting of two components 
joined at three nodes, its dual graph is three-connected, hence 
the dualizing sheaf $\omega_X$ 
is very ample due to Proposition~\ref{prop:dualgraph}. We will analyze the geometry of its canonical image in $\PP^3$. 
\par
We have $h^0(\OO_X(2p+r)) \geq 2$ for some smooth point $r$. Since $\omega_X \sim \OO_X(5p+q)$, 
by Riemann-Roch, it implies that $h^0(\OO_X(3p+q-r)) \geq 2$, 
hence $h^0(\OO_X(3p-r)) \geq 1$. If $r\neq p$, then $r$ is not a base point of 
$|\OO_X(3p)|$, hence $h^0(\OO_X(3p)) \geq 2$. If $r = p$, then we still have 
 $h^0(\OO_X(3p)) = h^0(\OO_X(2p+r)) \geq 2$. In any case, $3p$ admits a $g^1_3$ for $X$. 
By Riemann-Roch, $2p+q$ also yields a $g^1_3$. These must be two different $g^1_3$'s, 
for $p\neq q$. 
\par
Since $X$ is not hyperelliptic, its canonical image is contained in a quadric surface $Q$ 
in $\PP^3$. By Geometric Riemann-Roch, a section of a $g^1_3$ on $X$ corresponds to a 
line in $\PP^3$ that intersects $Q$ at three points (counting with multiplicity). By B\'{e}zout, this line must
 be a ruling of $Q$. Since $X$ has two $g^1_3$'s, $Q$ must be smooth and its two 
rulings correspond to the two $g^1_3$'s. 
But the two lines spanned by the sections $2p+q$ and $3p$ cannot be both tangent to $X$ at the smooth point $p$, contradiction. 
\end{proof}
\par
\begin{proof}[{Proof of Theorem~\ref{thm:g4main}, Case ${\omoduli[4]}(5,1)$}]
We lift the \teichm curve $\bC$ to $\barmoduli[{4,1}]$ 
using the $5$-fold zero of $\omega$. By the proposition $\bC\cdot BN^1_{3,(2)} = 0$, we have 
$$ \bC \cdot(4\omega_\rel + 8\lambda - \delta_0)=0.$$
By Proposition~\ref{prop:intomega}, we also have
$$ \bC\cdot \omega_\rel = \frac{\bC\ldotp \lambda - (\bC\ldotp \delta)/12}{11/3}. $$
Now the result follows by combining the two equalities. 
\end{proof}

\subsection{The stratum $\omoduli[4](4,2)^{\even*}$} \label{sec:42even}

In the case  $\omoduli[4](4,2)^\even$ we have
$$ L_{(4,2)^\even} = 32/15, \quad  s_{(4,2)^\even} = 17/2, \quad c_{(4,2)^\even} = 68/45. $$

Based on numerical values on individual \teichm curves, we believe that the sum of 
Lyapunov exponents is non-varying in this stratum. But we have not found a moduli space and a divisor 
to perform the desired disjointness argument. 

\subsection{The stratum $\omoduli[4](4,2)^{\odd*}$} \label{sec:42odd}

In the case  $\omoduli[4](4,2)^\odd$ we have
$$ L_{(4,2)^\odd} = 29/15 , \quad  s_{(4,2)^\odd} = 236/29, \quad c_{(4,2)^\odd} = 59/45. $$

We also believe that the sum of Lyapunov exponents is non-varying in this case. But we have not discovered a divisor 
that would do the job.

\subsection{The stratum $\omoduli[4](3,3)^\nh$} \label{sec:33nh}

In the case  $\omoduli[4](3,3)^\nh$ we have
$$ L_{(3,3)^\nh} = 2 , \quad  s_{(3,3)^\nh} = 33/4, \quad c_{(3,3)^\nh} = 11/8. $$
\par
\begin{prop}
Let $C$ be a \teichm curve generated by a flat surface 
$(X,\omega) \in \omoduli[4](3,3)^\nh$,
lifted to $\barmoduli[{4, 2}]$ (after a degree two base change). Then $\bC$
does not intersect the divisor $\Lin^1_3$. 
\end{prop}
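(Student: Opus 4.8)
The plan is to run the whole argument on the canonical image of $X$ in $\PP^3$, exploiting the geometry of the quadric through a genus four canonical curve, exactly as in the treatment of $\omoduli[4](5,1)$. By Proposition~\ref{prop:Teichhypdisj} the generating surface, hence every smooth fiber $X$, is non-hyperelliptic, so its canonical model is the complete intersection of a quadric $Q$ and a cubic in $\PP^3$, and $\div(\omega) = 3p_1 + 3p_2$ is the hyperplane section $P \cap X$ cut out by the plane $P$ corresponding to $\omega \in H^0(\omega_X)$. Suppose $(X, p_1, p_2) \in \bC \cap \Lin^1_3$. Since $\Lin^1_3 = BN^1_{3,(1,1)}$ parameterizes pointed curves carrying a $g^1_3$ whose defining divisor contains $p_1 + p_2$, write this divisor as $p_1 + p_2 + s$. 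Clifford's theorem (Theorem~\ref{thm:clifford}) forces $h^0(\OO_X(p_1+p_2+s)) = 2$, so by Geometric Riemann--Roch the three points $p_1,p_2,s$ span a line $\ell \subset \PP^3$.

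The first key step is to pin down $s$. Because $p_1, p_2 \in P$, the line $\ell = \overline{p_1 p_2}$ lies in $P$, so $s \in \ell \cap X \subset P \cap X$, which is supported on $\{p_1, p_2\}$; hence $s \in \{p_1,p_2\}$ and, after relabelling, the $g^1_3$ divisor is $2p_1 + p_2$ with $\ell = T_{p_1}X \subset P$. A line meeting the canonical sextic in the degree three divisor $2p_1 + p_2$ must lie on $Q$ by B\'ezout, so $\ell$ is a ruling. The second key step is to produce a competing ruling: $\ell \subset P \cap Q$ forces the conic $P \cap Q$ to degenerate as $P \cap Q = \ell + \ell'$, and subtracting $\ell \cap X = 2p_1 + p_2$ from the hyperplane section $P \cap X = 3p_1 + 3p_2$ gives $\ell' \cap X = p_1 + 2p_2$. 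Thus both $\ell$ and $\ell'$ pass through $p_1$ and through $p_2$. Since $\ell \neq \ell'$ (otherwise $P \cap X = 2(2p_1+p_2) \neq 3p_1 + 3p_2$), they are distinct coplanar lines meeting in a single point, which forces $p_1 = p_2$, a contradiction. I note that this argument is uniform in whether $Q$ is smooth or a cone, since it never uses the number of rulings.

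The main obstacle is the boundary, as $\bC$ also meets nodal stable fibers and the statement concerns $\bC$ inside $\barmoduli[{4,2}]$. By Corollary~\ref{cor:zerosatinf} and Corollary~\ref{cor:degfibers} these fibers have no separating node and are either irreducible or consist of two components meeting in $n = 3$ or $n = 5$ nodes; by Proposition~\ref{prop:Teichhypdisj} none is hyperelliptic, so by Proposition~\ref{prop:dualgraph} the dualizing sheaf is very ample and the canonical model is again a non-degenerate sextic lying on a quadric in $\PP^3$ (a quadric exists by the dimension count $h^0(\OO_{\PP^3}(2)) = 10 > 9 = h^0(\omega_X^{\otimes 2})$). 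I would then rerun the interior argument, invoking Geometric Riemann--Roch and Clifford's theorem in their stable-curve forms (Theorem~\ref{thm:clifford}) in place of the smooth statements.

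The step I expect to require the most care is precisely this boundary analysis: for the two-component degenerations the quadric $Q$ may be \emph{reducible}, a union of two planes each containing one component, so the clean ``two rulings of a single conic'' dichotomy breaks down. In that situation I would replace it by a direct analysis of how a line through $p_1 + p_2$ can meet each component, using that the two zeros have equal odd order and that the dualizing sheaf has degree $3$ on each relevant component, to rule out any $g^1_3$ simultaneously passing through both zeros on these special stable curves.
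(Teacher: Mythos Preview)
Your argument is correct and runs along the same lines as the paper's: both reduce to exhibiting the pair of $g^1_3$'s $|2p_1+p_2|$ and $|p_1+2p_2|$ and then reach a contradiction via the line $\overline{p_1p_2}$ on the quadric through the canonical curve. The one genuine difference is how you reach $s\in\{p_1,p_2\}$. You observe geometrically that the secant line $\overline{p_1p_2}$ already lies in the canonical hyperplane $P$ cut out by $\omega$, so the third point $s$ of the collinear triple is forced into $P\cap X=\{p_1,p_2\}$; the paper instead applies Riemann--Roch to $\omega_X\otimes\cO_X(-p_1-p_2-r)$ and treats the cases $r\notin\{p_1,p_2\}$ and $r\in\{p_1,p_2\}$ separately. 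Your shortcut is a bit cleaner and avoids the case split.

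Your caution about reducible quadrics at the two-component boundary is well placed but easily dispatched, and the paper does not spell this out either. If $Q=P_1\cup P_2$ with $X_i\subset P_i$ a plane cubic and the three nodes on $L=P_1\cap P_2$, then the line spanned by the $g^1_3$ section $2p_1+p_2$ contains $T_{p_1}X_1\subset P_1$, which forces $p_2\in P_1$, hence $p_2\in L\cap X_2$; but $L\cap X_2$ consists exactly of the three nodes, contradicting that the zeros of $\omega$ are smooth points (Corollary~\ref{cor:zerosatinf}). So no separate ``direct analysis'' is really needed.
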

\par
\begin{proof}
Recall that $\Lin^1_3\subset \barmoduli[{4, 2}]$ parameterizes pointed curves $(X, p, q)$ that admit a $g^1_3$ 
with a section vanishing at $p, q, r$ for some $r\in X$. Suppose $(X,p,q)$ is contained in the 
intersection of $\bC$ with $\Lin^1_3$. Since $\omega_X \sim \OO_X(3p+3q)$ and $h^0(\OO_X(p+q+r)) \geq 2$, by Riemann-Roch 
we know that $h^0(\OO_X(2p+2q-r)) \geq 2$. If $r\neq p, q$, then $h^0(\OO_X(2p+2q))\geq 3$, hence 
$2p+q$ and $2q+p$ both admit $g^1_3$. If $X$ is not hyperelliptic, using the canonical image of $X$ contained 
in a quadric in $\PP^3$ and the preceding argument of rulings, one concludes that $2p+q$ and $2q+p$ span the same line (connecting $p,q$) on the quadric, contradiction. If $r=p$ or $q$, again, $2p+q$ and $2q+p$ both admit
$g^1_3$ and consequently $X$ is hyperelliptic. But this stratum is non-hyperelliptic, and Proposition~\ref{prop:Teichhypdisj} yields the desired contradiction.
\end{proof}
\par
\begin{proof}[{Proof of Theorem~\ref{thm:g4main}, Case ${\omoduli[4]}(3,3)$}]
By $\bC \cdot \Lin^1_3 = 0$
together with Proposition~\ref{prop:intomega} and $\kappa_{(3,3)}=5/8$, the result follows immediately.
\end{proof}

\subsection{The stratum $\omoduli[4](2,2,2)^\odd$} \label{sec:222odd}

In the case  $\omoduli[4](2,2,2)^\odd$ we have
$$ L_{(2,2,2)^\odd} = 2, \quad  s_{(2,2,2)^\odd} = 8, \quad c_{(2,2,2)^\odd} = 4/3. $$
Note that by \cite[Proposition 7]{kz03}
this stratum contains the hyperelliptic curves where all
the zeros are fixed, but not those, where a pair of zeros are
exchanged.
\par
\begin{proof}[{Proof of Theorem~\ref{thm:g4main}, Case ${\omoduli[4]}(2,2,2)^\odd$}]
We just need to apply Proposition~\ref{prop:spinslope} to obtain the result.
It does apply to this case, because an odd theta characteristic on a genus four curve
cannot have three or more sections by Clifford's theorem (Theorem~\ref{thm:clifford}).
Note that such a theta characteristic $D$ is balanced, since 
$\deg(D|_Z) = w_Z/2 =  w_Z \deg(D)/(2g-2)$ for every component $Z$ of a stable curve
$X_\infty$ over the \teichm curve.
\end{proof}

\subsection{The stratum $\omoduli[4](3,2,1)$} \label{sec:321}

In the case  $\omoduli[4](3,2,1)$, we have
$$ L_{(3,2,1)} = 25/12, \quad  s_{(3,2,1)} = 41/5, \quad c_{(3,2,1)} = 205/144. $$
\par
\begin{prop}
Let $C$ be a \teichm curve generated by a flat surface 
$(X,\omega) \in \omoduli[4](3,2,1)$,
lifted to $\barmoduli[{4, 3}]$. Then $\bC$
does not intersect the divisor $BN^1_{4,(1,1,2)}$. 
\end{prop}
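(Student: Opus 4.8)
The plan is to lift the \teichm curve by marking the three (distinct-order, hence canonically labelled, so no base change is needed) zeros of $\omega$, with $p_1$ the zero of order $3$, $p_2$ the zero of order $2$ and $p_3$ the simple zero, so that $\div(\omega) = 3p_1+2p_2+p_3$ and $\omega_X \sim \OO_X(3p_1+2p_2+p_3)$. This assignment is forced: matching the coefficient $3$ of $\omega_{3,\rel}$ in \eqref{BN(1,1,2)} to the simple zero is precisely what yields the value $L=25/12$ through Proposition~\ref{prop:intomega} and $\kappa_{(3,2,1)}=95/144$. Recall that $(X,p_1,p_2,p_3)$ lies in $BN^1_{4,(1,1,2)}$ when there is a degree-$4$ line bundle $\cL$ with $h^0(\cL)\geq 2$ and $h^0(\cL(-p_1-p_2-2p_3))\geq 1$; for integral $X$ the latter bundle has degree $0$, so it is trivial and $\cL \sim \OO_X(p_1+p_2+2p_3)$. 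Thus the membership condition reduces to $h^0(\OO_X(p_1+p_2+2p_3))\geq 2$, which I would show is impossible first over the interior and then over the boundary.

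Applying Riemann--Roch and Serre duality with $\omega_X \sim \OO_X(3p_1+2p_2+p_3)$ gives $h^0(\OO_X(p_1+p_2+2p_3)) = 1 + h^0(\OO_X(2p_1+p_2-p_3))$, so the hypothesis is equivalent to $h^0(\OO_X(2p_1+p_2-p_3))\geq 1$. Since $p_3\neq p_1,p_2$, this yields a second effective member of $|2p_1+p_2|$, hence $h^0(\OO_X(2p_1+p_2))\geq 2$, and a further use of Riemann--Roch gives $h^0(\OO_X(p_1+p_2+p_3))=h^0(\OO_X(2p_1+p_2))\geq 2$. Therefore both $2p_1+p_2$ and $p_1+p_2+p_3$ move in linear series $g^1_3$; they are distinct, since $2p_1+p_2\sim p_1+p_2+p_3$ would force $p_1\sim p_3$, and by construction they are residual, $(2p_1+p_2)+(p_1+p_2+p_3)\sim \omega_X$.

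For smooth $X$ this is a contradiction. The curve is non-hyperelliptic, because a hyperelliptic differential has a zero divisor invariant under the hyperelliptic involution while the orders $3,2,1$ are distinct; so its canonical model is a curve of degree $6$ in $\PP^3$ lying on a unique quadric $Q$. Having two distinct $g^1_3$'s forces $Q$ to be smooth, its two rulings cutting out exactly the two pencils. By Geometric Riemann--Roch the member of $|2p_1+p_2|$ through $p_1$ is cut by the ruling line $\ell_1$ with $\ell_1\cap X = 2p_1+p_2$, and the member of $|p_1+p_2+p_3|$ through $p_1$ is cut by the ruling line $\ell_2$ with $\ell_2\cap X = p_1+p_2+p_3$. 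Both $\ell_1$ and $\ell_2$ then pass through $p_1$ and $p_2$, whereas two lines from different rulings of a smooth quadric meet in a single point; hence $\ell_1=\ell_2$, the desired contradiction.

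The main work, and the expected obstacle, is the boundary. By Corollary~\ref{cor:zerosatinf} every degenerate fiber $X_\infty$ has no separating node and carries $\omega_\infty$ with the same profile $3p_1+2p_2+p_3$ at smooth points, and by Proposition~\ref{prop:Teichhypdisj} (the case $\mu=(3,2,1)$) it is not hyperelliptic. The derivation of the two residual pencils above uses only the dualizing sheaf, so it carries over; what must be redone is the geometric contradiction. When the dual graph of $X_\infty$ is three-connected, Proposition~\ref{prop:dualgraph} makes $\omega_{X_\infty}$ very ample, the canonical image is again a degree-$6$ curve on a quadric, and the ruling argument applies verbatim. The remaining degenerations — irreducible fibers whose canonical map is only base-point-free, and reducible fibers with no separating node (two components meeting in $n\geq 2$ nodes, with zeros distributed according to $\deg(\omega_\infty|_Z)=2g(Z)-2+\delta_Z$) — must be treated individually: I would bound $h^0$ of the relevant degree-$3$ bundles via the generalized Clifford theorem (Theorem~\ref{thm:clifford}), after checking that $2p_1+p_2$ and $p_1+p_2+p_3$ restrict to balanced divisors, and rule out two residual $g^1_3$'s component by component. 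This reducible, not-very-ample case analysis, rather than the interior argument, is where the genuine difficulty lies; once disjointness is established, $\bC\cdot BN^1_{4,(1,1,2)}=0$ together with Proposition~\ref{prop:intomega} gives $s(C)=41/5$ and the stated values of $L(C)$ and $c(C)$.
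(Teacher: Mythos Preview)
Your argument for smooth fibers, and more generally for fibers whose dual graph is three-connected, is exactly the paper's argument: derive the two residual $g^1_3$'s $|2p_1+p_2|$ and $|p_1+p_2+p_3|$ from $h^0(\OO_X(p_1+p_2+2p_3))\geq 2$ via Riemann--Roch, and use the ruling geometry on the quadric. One small remark: an irreducible nodal curve has dual graph a single vertex with loops, hence is vacuously three-connected, so there is no separate ``irreducible but only base-point-free'' case to worry about.

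The gap is the two-connected reducible case, which you correctly identify as the crux but do not settle. Your proposed tool, the generalized Clifford theorem, cannot do the job: for a balanced degree-$3$ divisor it only yields $h^0\leq 2$, which is perfectly compatible with the existence of both $g^1_3$'s and gives no contradiction. The paper instead enumerates the two possible two-connected configurations compatible with $\div(\omega_\infty)=3p_1+2p_2+p_3$ and no separating node (either $X_1\cup X_2$ of arithmetic genera $1,2$ meeting in two nodes with $p_2\in X_1$ and $p_1,p_3\in X_2$, or a chain $Y_1\cup X_0\cup X_1$ of genera $1,0,1$ with the zeros on distinct components), and treats each by a direct argument: in the first case, since $p_1,p_2$ lie on different components, $p_2$ is a base point of both $|2p_1+p_2|$ and $|p_1+p_2+p_3|$, which forces the contradictory conditions $\omega_{X_2}(x+y)\sim\cO_{X_2}(4p_1)$ (so $p_1$ is a Weierstrass point) and $\omega_{X_2}(x+y)\sim\cO_{X_2}(2p_1+2p_3)$ (so $p_1$ is not); in the second case, $h^0(\OO_X(p_1+p_2+p_3))\geq 2$ is impossible because the three points lie on three different components. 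You need arguments of this concrete kind, not a Clifford bound, to close the proof.
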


\begin{proof}
Recall that the Brill-Noether divisor $BN^1_{4,(1,1,2)}\subset
\barmoduli[4,3]$ parameterizes curves with a $g^1_4$ given by $p+q+2r$. 
Suppose that $(X, \omega)$ is contained in the intersection of $\bC$ with $BN^1_{4, (1,1,2)}$, 
where div($\omega) = 3p + 2q + r$ and $p,q,r$ are the (ordered) marked points. 
By $h^0(\OO_X(p+q+2r)) \geq 2$ and Riemann-Roch, we have
$h^0(\OO_X(2p+q- r)) \geq 1$. 
Consequently $h^0(\OO_X(2p+q))\geq 2$ and by Riemann-Roch again, we have
$h^0(\OO_X(p+q+r)) \geq 2$. 
Note that if $2p+q \sim p+q+r$, then $p\sim r$, which is impossible. 
If these are two different $g^1_3$'s and $X$ is smooth, or
stable but non-hyperelliptic and at least three-connected, then its canonical map is an
embedding. Consequently both $p$ and $q$ lie on two different rulings of
the quadric containing the canonical image of $X$. This is impossible.
\par
Since hyperelliptic stable fibers cannot occur by Proposition~\ref{prop:Teichhypdisj}, 
the last case to be excluded consists of a stable curve which is only two-connected.
Given the constraints in Corollary~\ref{cor:zerosatinf}, there are two possible types 
for such a stable curve. First, there are two irreducible nodal 
curves $X_1$ and $X_2$ of arithmetic genus one and two, respectively, joined
at two nodes $\{x,y\}$ with $q$ lying on $X_1$ and $p$ and $r$ lying on $X_2$.
Second, there are irreducible nodal curves $X_1$, $X_0$ and $Y_1$, 
the index specifying the arithmetic genus with $p$ on $Y_1$, $q$ on $X_1$
and $r$ on $X_0$, whose intersection is given by $Y_1 \cdot X_1 = \{x\}$, $Y_1 \cdot X_0 = \{z_1,z_2\}$,
$X_1 \cdot X_0 = \{y\}$.
\par
For the first type, consider the linear system $|\OO_X(2p+q)|$. Since $p$ and $q$ lie on different
components of the stable curve, $q$ has to be a base point of this linear system.
Hence $\omega_{X_2}(x+y) \sim \cO_{X_2}(4p)$, i.e.\ $p$
is a Weierstrass point
for the line bundle $\omega_{X_2}(x+y)$. By the same argument $q$ is a
base point of $|\OO_X(p+q+r)|$, hence $\omega_{X_2}(x+y) \sim \cO_{X_2}(2p+2r)$.
Since $p \neq r$, this implies that $p$ is not a Weierstrass point and
we obtain the desired contradiction.
\par
For the second type, the condition $h^0(\OO_X(p+q+r)) \geq 2$ provides
an immediate contradiction, since all the three points lie on different
components of the stable curve.
\end{proof}

\begin{proof}[{Proof of Theorem~\ref{thm:g4main}, Case ${\omoduli[4]}(3,2,1)$}]
Recall the divisor class of $BN^1_{(1,1,2)}$ in \eqref{BN(1,1,2)}. 
By $\bC \cdot BN^1_{4, (1,1,2)} = 0$ together with Proposition~\ref{prop:intomega}, the result 
follows directly. 
\end{proof}

\subsection{Varying sum in the stratum $\omoduli[4](2,2,2)^\even$}

In the case  $\omoduli[4](2,2,2)^\even$, we have 
$$ L_{(2,2,2)^\even} = 166/75, \quad  s_{(2,2,2)^\even} = 696/83, \quad c_{(2,2,2)^\even} = 116/75. $$
Note that by \cite[Proposition 7]{kz03}
this stratum contains the hyperelliptic curves where 
a pair of zeros are exchanged, but not those, where
all the zeros are fixed. For \teichm curves $C^\hyp$ contained in the locus of hyperelliptic flat surfaces  
within this stratum, we have 
$$L(C^\hyp) = 7/3. $$
\par
\begin{exs}
The square-tiled surface $(X: y^6=x(x-1)(x-t), \omega = dx/y)$ found by Forni and Matheus
\cite{fornimatheus} has maximally degenerate Lyapunov spectrum, i.e.\ $L(C) = 1.$ 
\end{exs}
\par
\begin{prop} 
A \teichm curve $C$ generated by a non-hyperelliptic flat surface 
$(X,\omega) \in \obarmoduli[4](2,2,2)^\even$
has 
$$ L(C) \leq 16/7.$$
\par
In particular the sum of Lyapunov exponents of any \teichm curve 
generated by a non-hyperelliptic flat surface in this stratum is strictly
smaller than the sum of Lyapunov exponents of any \teichm curve 
generated by a hyperelliptic flat surface  in this stratum. 
\end{prop}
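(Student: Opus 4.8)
The plan is to translate everything into a slope statement: by \eqref{eq:Lcs} with $\kappa_{(2,2,2)}=\tfrac23$ one has $s(C)=12-8/L(C)$, so the asserted bound $L(C)\le 16/7$ is \emph{exactly} equivalent to $s(C)\le 17/2$. The whole proof then reduces to producing an effective divisor of slope $17/2$ that the \teichm curve does not lie on, and applying Lemma~\ref{lem:upperbound} (corrected by Proposition~\ref{prop:intomega} if marked points are used). The concluding ``in particular'' is then immediate, since $16/7<7/3$ and $7/3$ is the value attained on hyperelliptic flat surfaces recorded just above.

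First I would fix the projective geometry forced on a non-hyperelliptic $(X,\omega)$. Writing $\div(\omega)=2(p_1+p_2+p_3)$, the bundle $\eta=\OO_X(p_1+p_2+p_3)$ is an even theta characteristic, so $h^0(\eta)$ is even; being effective it satisfies $h^0(\eta)\ge 2$, while Clifford's theorem (Theorem~\ref{thm:clifford}) gives $h^0(\eta)\le 2$. Hence $\eta$ is a vanishing even theta characteristic and defines a $g^1_3$; on the canonical model this means $X$ lies on a singular quadric, i.e.\ $X$ lies on the Gieseker--Petri divisor $GP$ of \eqref{eq:GP}, which in $g=4$ is precisely the even theta-null divisor and has slope $s(GP)=17/2$. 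This is the ``right'' number, but it comes with a catch: every non-hyperelliptic fibre lands on $GP$, so $\bC\subset GP$ and Lemma~\ref{lem:upperbound} cannot be applied to $GP$ itself.

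The plan is therefore to separate $\bC$ from a divisor of slope $17/2$ on a finer space. After the usual unramified base change distinguishing the three double zeros, I would lift $\bC$ to a space of pointed curves (marking one or two of the $p_i$) and look for an effective pointed divisor $D$ whose slope, computed through Proposition~\ref{prop:intomega}, is $17/2$ and with $\bC\not\subset D$. The non-containment should be read off from the trigonal structure: the zeros $p_1,p_2,p_3$ form a \emph{reduced} fibre of the unique pencil $|\eta|$, so no $p_i$ is a ramification point of the trigonal map, and the generic fibre of $\bC$ fails the incidence cutting out $D$; Proposition~\ref{prop:Teichhypdisj} excludes the irreducible hyperelliptic degenerations, and $\bC\cdot D\ge 0$ then yields $s(C)\le 17/2$. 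One must also verify the incidence on each admissible nodal shape permitted by Corollary~\ref{cor:zerosatinf}, checking the balancedness hypothesis of Theorem~\ref{thm:clifford} there.

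The hard part will be the identification of $D$, and this is where the argument is genuinely delicate. The naive pointed divisor encoding ``$p_i$ is a ramification point of the $g^1_3$'' is $BN^1_{3,(2)}$ of \eqref{BNp}; going through Proposition~\ref{prop:intomega} it only gives $s(C)\le 60/7$, which is exactly the \emph{hyperelliptic} value and hence too weak. Beating it down to $17/2$ must use the non-hyperelliptic (quadric-cone) geometry more forcefully, and the honest slope-$17/2$ divisor sits on the even theta-null locus, where the relevant further degeneration is the stratum $\omoduli[4](4,2)^\even$ in the sense of Proposition~\ref{prop:Teichbddisj} --- precisely the case whose divisor class we do not know how to compute. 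A secondary obstacle, specific to $(2,2,2)$ and not handled by the strong ``no hyperelliptic fibre'' conclusion of Proposition~\ref{prop:Teichhypdisj}, is that reducible nodal fibres and hyperelliptic admissible covers can a priori appear in the boundary, so the incidence defining $D$ must be controlled on each such configuration before the intersection estimate can be trusted.
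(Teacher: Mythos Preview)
Your analysis is sharper than the paper's own argument. The paper runs exactly the two-case split you anticipate: first it applies Lemma~\ref{lem:upperbound} with $GP$ to get $s(C)\le 17/2$ when $C\not\subset GP$; then, for $C\subset GP$, it lifts to $\barmoduli[{4,2}]$ by marking two of the double zeros $p,q$ (the third being $r$) and uses the divisor $BN^1_{4,(3,1)}$ rather than your $BN^1_{3,(2)}$. The non-containment is proved by the mechanism you sketched: from $h^0(\cO_X(3p+q))\ge 2$ and Riemann--Roch one gets that both $2p+q$ and $2r+q$ are $g^1_3$'s, and uniqueness of the $g^1_3$ on a curve in $GP$ forces $2p\sim 2r$, contradicting non-hyperellipticity. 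So you were on the right track and only missed the specific divisor and the ``unique $g^1_3$'' contradiction.

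However, your numerical objection is well taken. With the class $BN^1_{4,(3,1)}=-\lambda+6\omega_{1,\rel}+\omega_{2,\rel}-\delta_\other$ and $\bC\cdot\omega_{i,\rel}=\chi/6$ (since $m_i=2$ and $\kappa=2/3$), one computes $\bC\cdot BN^1_{4,(3,1)}=\chi\bigl(\tfrac{7}{6}-\tfrac{L}{2}\bigr)$, hence only $L(C)\le 7/3$, exactly the bound your $BN^1_{3,(2)}$ gives. The paper asserts this step yields $L(C)\le 2$, which appears to be an arithmetic slip. Since you have correctly observed that the first case $C\not\subset GP$ is vacuous (the even theta characteristic $\eta=\cO_X(p_1+p_2+p_3)$ is a half-canonical $g^1_3$, so every non-hyperelliptic fibre lies on the singular quadric), the argument as written establishes only $L(C)\le 7/3=L(C^{\hyp})$, and the strict separation in the second sentence of the proposition is not actually proved by this method.
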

\par
\begin{proof}
Recall the divisor class in \eqref{eq:GP} of the Gieseker-Petri divisor $GP$ on $\barmoduli[4]$. 
It has slope equal to $17/2$. Hence if $C$ is
not entirely contained in this divisor, we have $s(C) \leq 17/2$ by Lemma~\ref{lem:upperbound}, 
which translates into $ L(C) \leq 16/7$.
\par
If $C$ is contained in $GP$, we try to intersect $C$ with the
Brill-Noether divisor $BN^1_{4,(3,1)}$. If $C$ is not contained
in  $BN^1_{4,(3,1)}$, using Proposition~\ref{prop:intomega} with $\kappa = 2/3$ we obtain 
a better bound $L(C) \leq 2$.
\par
Suppose that a generic surface $(X,\omega)$ parameterized by $C$ lies 
in $BN^1_{4,(3,1)}$. By definition, we have $h^0(\OO_X(3p+q)) \geq 2$, which
implies $h^0(\OO_X(q+2r-p)) \geq 1$ by Riemann-Roch.
Consequently we have $h^0(\OO_X(2r+q)) \geq 2$ and $h^0(\OO_X(2p + q)) \geq 2$.
Since we excluded hyperelliptic $X$, the assumption that $X$ is parameterized in
$GP$ implies that $2r + q \sim 2p+q$. Hence $2p \sim 2r$, which contradicts 
the assumption that $X$ is not hyperelliptic.
\end{proof}

\subsection{Varying sum in the stratum $\omoduli[4](4,1,1)$}
                       
In this stratum we have
$$ L_{(4,1,1)} = 1137/550  \approx 2.06727, \quad  s_{(4,1,1)} = 3118/379, 
\quad c_{(4,1,1)} = 1559/1100.$$
The stratum contains the locus of hyperelliptic flat surfaces coming from $\cQ(3,2,-1^{9})$. Hence for a \teichm curve $C^\hyp$
in this locus, the sum of Lyapunov exponents is 
$$L(C^\hyp) = 23/10.$$
\par
\begin{exs} {\rm
A \teichm curve $C$ generated by
the square-tiled surface with  
\begin{equation} \label{eq:411}
(\pi_r = (12)(3)(4)(5)(6\ 7)(8)(9\ 10), \pi_u = (132456879)(10))
\end{equation}
has $L(C)  = 1043/530 \approx 1.96792.$
\par
A \teichm curve $C$ generated by
the square-tiled surface with  
$$(\pi_r = (12)(3)(4)(5)(6)(7,8)(9)(10\ 11), \pi_u = (1\,3)(2,4,5,6,7,9)(8\,10)(11))$$
has $L(C)  = 267163/129510 \approx 2.06287.$
\par
This stratum also contains \teichm curves $C$ generated by
square-tiled surfaces with
$$L(C) \in \{1043/530 \approx 1.96792, 579/290, 4101/1990,
1799/870 \approx 2.06782, 23/10  \}.$$
}\end{exs}
\par
\begin{prop}
A \teichm curve $C$ generated by a non-hyperelliptic flat surface 
$(X,\omega) \in \omoduli[4](4,1,1)$
has 
$$ L(C) \leq 21/10.$$
\par
In particular the sum of Lyapunov exponents of any \teichm curve 
generated by a non-hyperelliptic flat surface in this stratum is strictly
smaller than the sum of Lyapunov exponents of any \teichm curve 
generated by a hyperelliptic flat surface in this stratum. 
\end{prop}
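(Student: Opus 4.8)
The plan is to bound the slope of $C$ from above by exhibiting a pointed Brill--Noether divisor that $\bC$ does not meet, and then convert the slope bound into the bound on $L(C)$ via the relation \eqref{eq:Lcs}. Write $\div(\omega) = 4p + q + r$ with $p$ the $4$-fold zero and $q,r$ the two simple zeros. After an unramified double cover of $C$, which changes neither the slope nor $L(C)$ (exactly as in the case $\omoduli[3](2,1,1)$), we may assume $q$ and $r$ are distinguished, so we can lift $\bC$ to $\barmoduli[{4,2}]$ by marking $p$ as the first and $q$ as the second point. The divisor I would use is the pointed Brill--Noether divisor $BN^1_{4,(2,2)}$, whose Brill--Noether number is $\rho(4,1,4,(2,2)) = -1$ and whose class, by Logan's formula \cite[Theorem 5.4]{Logan}, is $-\lambda + 3\omega_{1,\rel} + 3\omega_{2,\rel} - \delta_\other$.

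First I would record the numerology. Since $\bC$ misses every boundary divisor other than $\delta_0$ (Corollary~\ref{cor:zerosatinf}, together with the fact that the zeros never collide, so $\bC$ also avoids the divisors $\delta_{0;S}$ with $|S|\ge 2$), the intersection with $BN^1_{4,(2,2)}$ only sees $\lambda$, $\omega_{1,\rel}$ and $\omega_{2,\rel}$. Feeding the self-intersection formula of Proposition~\ref{prop:intomega} (with $m_1 = 4$, $m_2 = 1$ and $\kappa_{(4,1,1)} = 13/20$) into $\bC \cdot BN^1_{4,(2,2)} \ge 0$ yields, after simplification, $58\,(\bC\cdot\lambda) \ge 7\,(\bC\cdot\delta_0)$, i.e.\ $s(C) \le 58/7$; plugging $s = 58/7$ and $\kappa = 13/20$ into \eqref{eq:Lcs} gives exactly $L(C) \le 21/10$. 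Since hyperelliptic curves in this stratum have $L = 23/10 > 21/10$ by Theorem~\ref{thm:calchyp}, the ``in particular'' clause follows at once.

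The substance of the proof is therefore the geometric claim that $\bC$ is not contained in $BN^1_{4,(2,2)}$, equivalently that no fiber $(X,p,q)$ over $\bC$ satisfies $h^0(\OO_X(2p+2q)) \ge 2$. On the interior I would argue by contradiction: if $h^0(\OO_X(2p+2q)) \ge 2$ then, applying Riemann--Roch together with $\omega_X \sim 4p+q+r$ twice, one finds that both $|2p+r|$ and $|2p+q|$ are $g^1_3$'s, necessarily distinct (otherwise $q \sim r$, impossible) and each having $p$ as a ramification point (each fiber contains $2p$). By Proposition~\ref{prop:Teichhypdisj} the curve $X$ is non-hyperelliptic, so its canonical model lies on a quadric in $\PP^3$ and its $g^1_3$'s are cut by the rulings; two distinct $g^1_3$'s both ramifying at $p$ would force the two ruling lines through $p$ both to be tangent to $X$ at $p$, which is absurd since they are distinct lines through $p$ and at most one of them can equal the tangent line $T_p$ (and a quadric cone carries only one $g^1_3$, excluding that case outright). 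This contradiction shows the interior of $\bC$ avoids the divisor.

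The hard part will be the boundary, as usual. For a stable fiber $X_\infty$ I would use that $X_\infty$ has no separating node and that $\omega_{X_\infty}$ still has signature $(4,1,1)$ (Corollary~\ref{cor:zerosatinf}), and that $X_\infty$ is non-hyperelliptic (Proposition~\ref{prop:Teichhypdisj}). When the dual graph is three-connected, Proposition~\ref{prop:dualgraph} makes $\omega_{X_\infty}$ very ample and the quadric-and-ruling argument above carries over on the nodal canonical model; when it is only two-connected, or when $X_\infty$ is reducible, I would instead bound $h^0(\OO_{X_\infty}(2p+2q))$ directly with the generalized Clifford inequality (Theorem~\ref{thm:clifford}) and, in the reducible case, use that a point forced onto a component other than the one carrying $p$ becomes a base point of the relevant series --- the same device used for $\omoduli[4](3,2,1)$. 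Checking that these degenerate configurations are genuinely incompatible with $\div(\omega_\infty) = 4p+q+r$ is where the care is needed; everything else is bookkeeping.
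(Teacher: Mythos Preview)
Your approach is exactly the paper's: the same lift to $\barmoduli[{4,2}]$ by $(p,q)$, the same divisor $BN^1_{4,(2,2)}$, the same Riemann--Roch chain producing the two $g^1_3$'s $|2p+q|$ and $|2p+r|$, and the same tangent-ruling contradiction on the quadric. The numerics are correct.

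There is, however, one unnecessary complication. You write that ``$\bC$ is not contained in $BN^1_{4,(2,2)}$'' is \emph{equivalent} to ``no fiber over $\bC$ satisfies $h^0(\OO_X(2p+2q))\ge 2$'', and then set out to verify disjointness at the boundary. That equivalence is false: non-containment only means the \emph{generic} fiber lies outside the divisor. Since you are proving an \emph{upper bound} via Lemma~\ref{lem:upperbound} (not an equality as in the non-varying cases), non-containment is all you need, and your interior argument on a single smooth non-hyperelliptic fiber already gives it. The paper's proof does exactly this: it assumes the generic $(X,\omega)$ lies in $BN^1_{4,(2,2)}$ and derives the contradiction; there is no boundary discussion. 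This matters here because $(4,1,1)$ is \emph{not} among the strata covered by the ``moreover'' clause of Proposition~\ref{prop:Teichhypdisj}, so degenerate fibers can in fact be hyperelliptic and your proposed boundary analysis would be genuinely delicate --- but it is simply not needed.
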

\par
\begin{proof}
By the same argument as in the proof for the stratum $\omoduli[3](2,1,1)$
we may pass to an unramified covering and label the zeros of $\omega$ such 
that $\omega_X \sim \OO_X(4p+q+r)$ along the whole family. Using $p$ and $q$ 
we lift this covering
to a curve in $\barmoduli[{4,2}]$ that we continue to call $C$.
We will show that $C$ is not entirely contained in the divisor
$BN^1_{4,(2,2)}$, provided that $X$ is not hyperelliptic.
Then $\bC\cdot BN^1_{4,(2,2)} \geq 0$ together with 
Proposition~\ref{prop:intomega} using $\kappa_{(4,1,1)}=39/60$ implies the claim.
\par
Suppose a generic flat surface $(X,\omega)$ in $C$ is contained in $BN^1_{4,(2,2)}$.
Then by definition $h^0(\OO_X(2p+2q)) \geq 2$ and by Riemann-Roch 
$h^0(\OO_X(2p-q+r)) \geq 1$. Hence $h^0(\OO_X(2p+r)) \geq 2$ and by Riemann-Roch again
$h^0(\OO_X(2p+q)) \geq 2$. Since $X$ is not hyperelliptic, we consider
the quadric surface containing its canonical image. The ruling that is
tangent to $X$ at $p$ intersects $X$ at a third point. This point has to
be both $q$ and $r$ due to the $g^1_3$'s given by $2p+q$ and $2p+r$,
which is absurd for $q \neq r$. 
\end{proof}
\par
%
\subsection{Varying sum in the stratum $\omoduli[4](3,1,1,1)$}

In this stratum we have
$$ L_{(3,1,1,1)} =  66/31 \approx 2.12903, \quad  
s_{(3,1,1,1)} = 65/8, \quad c_{(3,1,1,1)} = 715/496.$$
This stratum does not contain any submanifolds obtained by double covering constructions.
\par
\begin{exs} {\rm
The \teichm curve $C$ generated by the square-tiled surface with  
\begin{equation} \label{eq:3111}
(\pi_r = (123456789\,10), \pi_u = (145836\,10)(279))
\end{equation}
has $L(C)  = 514/243  \approx 2.11523.$
\par
The \teichm curve $C$ generated by the square-tiled surface with 
$$(\pi_r = (123456789\,10\,11), \pi_u = (1458\,10\,27\,11)(369))$$ 
has $L(C)  = 1531/720  \approx 2.12639.$
\par
There exist \teichm curves $C$ generated by square-tiled surfaces in this stratum with 
$$ L(C) \in \{241/114 \approx 2.114035,  72167/33984, 1531/720 \approx 2.1263\}.$$
}
\end{exs}
\par
\begin{prop}
A \teichm curve $C$ generated by a flat surface 
$(X,\omega) \in \obarmoduli[4](3,1,1,1)$
has 
$$ L(C) \leq 7/3.$$
\end{prop}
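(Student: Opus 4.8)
The plan is to produce an effective divisor that no \teichm curve of this stratum can be contained in, bound its slope by Lemma~\ref{lem:upperbound}, and translate this into a bound on $L(C)$ through~\eqref{eq:Lcs}. Recording $\kappa_{(3,1,1,1)}=\frac{1}{12}\bigl(\frac{3\cdot5}{4}+3\cdot\frac{1\cdot3}{2}\bigr)=\frac{11}{16}$, the relation~\eqref{eq:Lcs} shows that $L(C)\leq 7/3$ is equivalent to the slope bound $s(C)\leq 237/28$. The first thing to notice is that this target lies below $17/2$, the slope of the Gieseker--Petri divisor~\eqref{eq:GP} on $\barmoduli[4]$ (the smallest slope available there); intersecting $\bC$ with~\eqref{eq:GP} only gives $s(C)\leq 17/2$ and hence the weaker bound $L(C)\leq 33/14$. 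I must therefore leave $\barmoduli[4]$ and work on a space of pointed curves. As for $\omoduli[3](2,1,1)$ and $\omoduli[4](4,1,1)$, after an unramified base change the three simple zeros become distinguishable sections, and I would lift $\bC$ to $\barmoduli[{4,1}]$ or $\barmoduli[{4,2}]$ by marking the triple zero $p$ (and possibly one simple zero $q$).

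On the pointed space I would intersect $\bC$ with a pointed Brill--Noether divisor and evaluate using Proposition~\ref{prop:intomega}: writing $\ell=\bC\cdot\lambda$, one has $\bC\cdot\omega_{i,\rel}=\frac{4\ell(12-s(C))}{33(m_i+1)}$, equal to $\frac{\ell(12-s(C))}{33}$ at $p$ and $\frac{2\ell(12-s(C))}{33}$ at a simple zero. Imposing $\bC\cdot D\geq 0$ then yields an explicit upper bound for $s(C)$, and I would choose the divisor, and the assignment of weights to the marked points, whose bound is exactly $237/28$; its class can be read from the cases recorded in~\eqref{BNp}, \eqref{eq:Lin}, \eqref{BN(1,1,2)}, or computed by the test-curve method used for~\eqref{eq:Nfold2}. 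A preliminary check is needed to discard the divisors that every surface of the stratum automatically satisfies and which therefore cannot be used: since $\omega_X\sim\OO_X(3p+q+r+s)$, Riemann--Roch forces $h^0(\OO_X(3p+q))=2$ and $h^0(\OO_X(p+q+r+s))=2$, so $\bC$ lies inside the divisors cut out by these pencils.

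The decisive and hardest step is to prove non-containment for the chosen divisor. By Proposition~\ref{prop:Teichhypdisj} this stratum contains no hyperelliptic flat surfaces, so the generic fibre $X$ is non-hyperelliptic, and by Corollary~\ref{cor:zerosatinf}, Corollary~\ref{cor:degfibers} and Proposition~\ref{prop:dualgraph} its dualizing sheaf is very ample except for the two-connected boundary degenerations, which I must treat separately. For smooth $X$ I would study the canonical curve in $\PP^3$, lying on a quadric whose two rulings cut out the $g^1_3$'s, together with the fact that the triple zero makes its osculating plane meet $X$ in $3p+q+r+s$. The obstacle, by contrast with $\omoduli[4](5,1)$ and $\omoduli[4](4,1,1)$, is that here $3p$ is generically \emph{not} a $g^1_3$, so the clean contradiction of two distinct $g^1_3$'s both tangent to $X$ at $p$ is unavailable; the Riemann--Roch chains instead land on a degree-two effective class, and excluding the configurations this allows --- over smooth curves by the ruling analysis, and over the irreducible and two-component stable curves permitted by Corollary~\ref{cor:degfibers} by a component-by-component argument --- is exactly what limits the attainable bound to $7/3$. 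Once non-containment is established, combining $\bC\cdot D\geq 0$ with the intersection formula and~\eqref{eq:Lcs} delivers $L(C)\leq 7/3$.
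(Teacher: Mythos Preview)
Your plan has the right shape but misses the two crucial ingredients the paper actually uses, and one of your claims is backwards.

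\textbf{Wrong target space and unspecified divisor.} You propose lifting to $\barmoduli[{4,1}]$ or $\barmoduli[{4,2}]$ and searching for a divisor whose slope is exactly $237/28$, but you never produce one. The paper instead lifts to $\barmoduli[{4,3}]$ by marking the triple zero $p$ together with two of the simple zeros, say $q,r$, and intersects with $BN^1_{4,(1,1,2)}$ (class in~\eqref{BN(1,1,2)}), exactly as in Section~\ref{sec:21111}. A direct computation with Proposition~\ref{prop:intomega} and $\kappa_{(3,1,1,1)}=11/16$ shows that $\bC\cdot BN^1_{4,(1,1,2)}\ge 0$ already yields a bound at least as strong as $L(C)\le 7/3$.

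\textbf{The two-lift trick, which you overlook, is the whole point.} You write that ``the clean contradiction of two distinct $g^1_3$'s both tangent to $X$ at $p$ is unavailable''. This is exactly wrong: that contradiction \emph{is} the argument, obtained by using \emph{two} different lifts. If $C$ lies in $BN^1_{4,(1,1,2)}$ for the lift marking $(p,q,r)$, then $h^0(\OO_X(p+q+2r))\ge 2$; Riemann--Roch with $\omega_X\sim\OO_X(3p+q+r+s)$ gives $h^0(\OO_X(2p+s-r))\ge 1$, hence $h^0(\OO_X(2p+s))\ge 2$. Running the same argument for the lift marking $(p,q,s)$ gives $h^0(\OO_X(2p+r))\ge 2$. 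Thus $2p+r$ and $2p+s$ are two distinct $g^1_3$'s, and on the quadric containing the canonical model both corresponding rulings would be tangent to $X$ at $p$, which is impossible. So if $C$ is contained for one lift, it is not for the other, and the inequality applies.

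\textbf{Minor points.} Proposition~\ref{prop:Teichhypdisj} does not cover $(3,1,1,1)$; the absence of hyperelliptic flat surfaces here follows from the parity constraint that a zero fixed by the hyperelliptic involution has even order, while the unique triple zero would have to be fixed. Also, since you only need an \emph{inequality} $\bC\cdot D\ge 0$, it suffices that the \emph{generic} fibre avoid the divisor; the separate boundary analysis you outline is unnecessary for an upper bound.
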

\par
\begin{proof}
The proof is identical to the one given below for the stratum 
$\omoduli[4](2,1,1,1,1)$ using two different lifts
to $\barmoduli[{4,3}]$ and the divisor $BN^1_{4,(1,1,2)}$ (see Section~\ref{sec:21111}). 
\end{proof}

\subsection{Varying sum in the stratum $\omoduli[4](2,2,1,1)$}

In this stratum we have
$$ L_{(2,2,1,1)} =  5045/2358 \approx 2.13952, \quad  s_{(2,2,1,1)} = 8178/1009, 
\quad c_{(2,2,1,1)} = 6815/4716.$$
\par 
The stratum contains two  loci of hyperelliptic flat surfaces. One of them 
corresponds to the orientation double covers of $\cQ(4,2,-1^{10})$, 
hence for a \teichm curve $C$
in this locus, the sum of Lyapunov exponents is $L(C) = 5/2$. In this locus,
the zeros are permuted in pairs by the hyperelliptic involution.
\par
The second one corresponds to $\cQ(2,1,1,-1^8)$, hence for a \teichm curve $C$
in this locus, the sum of Lyapunov exponents is $L(C) = 13/6 \approx 2.16$.
\par
\begin{exs}{\rm
The \teichm curve $C$ generated by the
square-tiled surface with $$(\pi_r = (12)(3)(4)(5)(67)(8)(9)(10\,11)(12), \\ 
\pi_u = (134)(256789\,10\,11\,12))$$
has $L(C)  = 3313/1590\approx 2.083.$
\par
The \teichm curve $C$ generated by the
square-tiled surface with $$(\pi_r = (12)(3)(4)(56)(7)(89)(10\,11), \\
\pi_u = (134578)(269\,10)(11))$$ has 
$L(C)  = 4919/2312 \approx 2.1275.$
\par
There exist \teichm curves $C$ generated by 
square-tiled surfaces in this stratum with
$$ L(C) \in \{3313/1590\approx  2.083,157/75, 273529/128580, 4919/2312\approx  2.1275 \}.$$
}
\end{exs}
\begin{prop}
A \teichm curve $C$ generated by a non-hyperelliptic flat surface 
$(X,\omega) \in \obarmoduli[4](2,2,1,1)$
has 
$$ L(C) \leq 13/6.$$
\par
In particular the sum of Lyapunov exponents of any \teichm curve 
generated by a non-hyperelliptic flat surface in this stratum is strictly
smaller than the sum of Lyapunov exponents of any \teichm curve 
generated by a hyperelliptic flat surface where the four zeros are permuted in pairs.
\end{prop}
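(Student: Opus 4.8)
The plan is to lift $C$ to $\barmoduli[{4,3}]$ and to intersect it with the pointed Brill-Noether divisor $BN^1_{4,(1,1,2)}$, whose class is recorded in \eqref{BN(1,1,2)}. As in the treatment of $\omoduli[3](2,1,1)$ and $\omoduli[4](4,1,1)$, I would first pass to a finite unramified cover of $C$ on which the four zeros of $\omega$ are separated into sections; this changes neither the slope nor $L(C)$. Writing $\div(\omega)=2p_1+2p_2+q_1+q_2$, I would mark the two double zeros $p_1,p_2$ as the first two (weight-one) points and the simple zero $q_1$ as the third (weight-two) point. Putting the weight on a \emph{simple} zero is the crucial choice, because then $\omega_X-(p_1+p_2+2q_1)=p_1+p_2+q_2-q_1$ is not manifestly effective, which is exactly what leaves room for non-containment. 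With this labeling Proposition~\ref{prop:intomega} yields $\bC\cdot\omega_{1,\rel}=\bC\cdot\omega_{2,\rel}=\chi/6$ and $\bC\cdot\omega_{3,\rel}=\chi/4$, together with $\bC\cdot\lambda=\tfrac{\chi}{2}L(C)$, and one checks that $\bC\cdot BN^1_{4,(1,1,2)}\ge 0$ translates into exactly $L(C)\le 13/6$.

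The heart of the argument is to show that $\bC$ is \emph{not} contained in $BN^1_{4,(1,1,2)}$, for which it suffices to treat the generic (smooth, non-hyperelliptic) fiber $X$; no boundary analysis is needed here, in contrast with the non-varying proofs. Membership in the divisor means $h^0(\OO_X(p_1+p_2+2q_1))\ge 2$, and I would argue by contradiction. Riemann-Roch applied to this degree-four divisor gives $h^0(\OO_X(p_1+p_2+q_2-q_1))\ge 1$, hence $p_1+p_2+q_2\sim q_1+E$ for an effective $E$ of degree two; since $q_1\notin\{p_1,p_2,q_2\}$ these are distinct effective divisors, so $h^0(\OO_X(p_1+p_2+q_2))\ge 2$. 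As $\omega_X-(p_1+p_2+q_1)=p_1+p_2+q_2$, Serre duality forces $h^0(\OO_X(p_1+p_2+q_1))\ge 2$ as well. Thus both degree-three divisors move in pencils, so by the geometric Riemann-Roch theorem $p_1,p_2,q_1$ are collinear in the canonical $\PP^3$ and so are $p_1,p_2,q_2$. Since both triples lie on the unique line through the distinct points $p_1,p_2$, all four points are collinear; geometric Riemann-Roch then gives $h^0(\OO_X(p_1+p_2+q_1+q_2))=3$, and from $\omega_X-(p_1+p_2+q_1+q_2)=p_1+p_2$ Riemann-Roch yields $h^0(\OO_X(p_1+p_2))=2$, i.e. $X$ is hyperelliptic. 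This contradicts the standing hypothesis, since a \teichm curve generated by a non-hyperelliptic flat surface has all fibers non-hyperelliptic.

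Granting non-containment, $\bC\cdot BN^1_{4,(1,1,2)}\ge 0$ because $\bC$ is not a component of this effective divisor; plugging in the intersection numbers above and the class \eqref{BN(1,1,2)} gives $0\le -\tfrac{\chi}{2}L(C)+\tfrac{\chi}{6}+\tfrac{\chi}{6}+3\cdot\tfrac{\chi}{4}=\chi\big(\tfrac{13}{12}-\tfrac{L(C)}{2}\big)$, whence $L(C)\le 13/6$. Since $13/6<5/2$, this is strictly smaller than the value $L=5/2$ attained on the locus of hyperelliptic flat surfaces coming from $\cQ(4,2,-1^{10})$, where the four zeros are permuted in pairs, which establishes the second assertion. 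I expect the only delicate point to be the geometric contradiction above: one must check that the auxiliary effective divisors are genuinely distinct (so that the $h^0$ really jump) and that the collinearity conclusions hold uniformly. The reduction all the way down to $h^0(\OO_X(p_1+p_2))=2$ is what makes the argument clean, since it sidesteps any case distinction according to whether the canonical curve lies on a smooth quadric or a quadric cone.
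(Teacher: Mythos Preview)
Your proof is correct and follows the same strategy as the paper: lift to $\barmoduli[{4,3}]$ by marking the two double zeros and one simple zero, show $\bC$ is not contained in $BN^1_{4,(1,1,2)}$, and read off $L(C)\le 13/6$ from $\bC\cdot BN^1_{4,(1,1,2)}\ge 0$ together with Proposition~\ref{prop:intomega}. The labeling, the Riemann--Roch step producing $h^0(\OO_X(p_1+p_2+q_1))\ge 2$ and $h^0(\OO_X(p_1+p_2+q_2))\ge 2$, and the numerical conclusion all match the paper exactly.

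The only (minor) difference is in how you derive the contradiction from the two $g^1_3$'s sharing $p_1$ and $p_2$. The paper argues via the quadric containing the canonical curve: both $g^1_3$'s cut out rulings through $p_1$ and $p_2$, hence the same ruling, forcing $q_1=q_2$. You instead use geometric Riemann--Roch directly to conclude that the four points are collinear in $\PP^3$, hence $h^0(\OO_X(p_1+p_2+q_1+q_2))=3$ and $h^0(\OO_X(p_1+p_2))=2$. Your route is slightly more self-contained in that it avoids invoking the quadric surface (and any implicit case split between smooth quadric and cone), while the paper's version yields the contradiction one step earlier. Both are standard and equally valid.
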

\par
\begin{proof}
By the same argument as in the proof for the stratum $\omoduli[3](2,1,1)$
we may pass to an unramified covering and label the zeros of $\omega$ such 
that $\omega_X \sim \OO_X(2p+2q+r+s)$ along the whole family. 
We lift this covering to a curve in $\barmoduli[{4,3}]$ by marking $p,q,r$
and continue to call it $C$. We will show that $C$ is not entirely contained in the divisor
$BN^1_{4,(1,1,2)}$. Then $\bC\cdot BN^1_{4,(1,1,2)}\geq 0$ together with Proposition~\ref{prop:intomega} 
implies this proposition.
\par
Suppose a generic flat surface $(X,\omega)$ parameterized by $C$ is contained in $BN^1_{4,(1,1,2)}$.
Then by definition $h^0(\OO_X(p+q+2r)) \geq 2$ and by Riemann-Roch 
$h^0(\OO_X(p+q-r+s)) \geq 1$. Hence $h^0(\OO_X(p+q+s)) \geq 2$ and by Riemann-Roch again
$h^0(\OO_X(p+q+r)) \geq 2$. For $X$ non-hyperelliptic this means that on the
quadric containing the canonical image of $X$ in $\PP^3$, there are two rulings passing through
$p$ and $q$ (hence they are the same ruling), one intersecting the curve moreover at $r$ and the other 
intersecting the curve moreover at $s$. This is impossible for $r\neq s$.
\end{proof}
\par

\subsection{Varying sum in the stratum $\omoduli[4](2,1,1,1,1)$}\label{sec:21111}

In this stratum we have
$$ L_{(2,1,1,1,1)} = 131/60  \approx 2.18333, \quad  s_{(2,1,1,1,1)} = 1052/131, \quad 
c_{(2,1,1,1,1)} = 263/180.$$

This stratum contains the locus of hyperelliptic flat surfaces
corresponding to $\cQ(3,2,2,-1^{11})$. Hence for a \teichm curve $C$
in this locus, the sum of Lyapunov exponents is $L(C) = 14/5 = 2.8$.
\par
\begin{exs}{\rm
The \teichm curve $C$ generated by the
square-tiled surface with $$(\pi_r = (12)(3)(4)(5,6)(7)(8)(9\,10)(11\,12)(13), \\ 
\pi_u = (132457689\,11\,10\,12\,13))$$
has $L(C)  = 268/129 \approx 2.0775.$
\par
The \teichm curve $C$ generated by the
square-tiled surface with $$(\pi_r = (12)(3)(4)(5)(6)(78)(9\,10)(11)(12\,13), \\
\pi_u = (13)(245679\,11\,8\,12\,10)(13))$$ has 
$L(C)  = 207826/95511 \approx  2.1759.$
\par
There exist \teichm curves $C$ generated by 
square-tiled surfaces in this stratum with 
$$ L(C) \in \{268/129 \approx 2.0775, 239/114, 4031/1923, 207826/95511 \approx  2.175 \}.$$
}\end{exs}
\par
\begin{prop}
A \teichm curve $C$ generated by a non-hyperelliptic flat surface 
$(X,\omega) \in \obarmoduli[4](2,1,1,1,1)$
has 
$$ L(C) \leq 7/3.$$
\par
In particular the sum of Lyapunov exponents of any \teichm curve 
generated by a non-hyperelliptic flat surface in this stratum is strictly
smaller than the sum of Lyapunov exponents of any \teichm curve 
generated by a hyperelliptic flat surface  in this stratum. 
\end{prop}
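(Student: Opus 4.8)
The plan is to bound the slope $s(C)$ from above using the pointed Brill--Noether divisor $BN^1_{4,(1,1,2)}$ on $\barmoduli[{4,3}]$, exactly as announced in the treatment of $\omoduli[4](3,1,1,1)$. After passing to a finite unramified covering of $C$ (which changes neither $s(C)$ nor $L(C)$) I may label the zeros of $\omega$ so that $\omega_X \sim \OO_X(2p + q_1 + q_2 + q_3 + q_4)$ holds along the whole family, with $p$ the double zero and the $q_j$ the simple zeros. I would then form two lifts of $C$ to $\barmoduli[{4,3}]$, the first marking $(p,q_1,q_2)$ and the second marking $(p,q_1,q_3)$, in each case putting the Brill--Noether weight $2$ on the \emph{last} simple zero. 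The aim is to show that $C$ fails to lie inside $BN^1_{4,(1,1,2)}$ for at least one of these two markings. Once this is known, $\bC\cdot BN^1_{4,(1,1,2)}\ge 0$ together with the class in \eqref{BN(1,1,2)}, Proposition~\ref{prop:intomega} and $\kappa_{(2,1,1,1,1)}=13/18$ yields $s(C)\le 58/7$, which by \eqref{eq:Lcs} is exactly $L(C)\le 7/3$. I note that this particular weight assignment (double zero marked first, weight $2$ on a simple zero) is forced: it is the unique admissible choice for which the resulting bound lands precisely on $58/7$, any other marking giving a strictly weaker estimate.

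For the geometric heart of the argument I would work on a generic, hence smooth and non-hyperelliptic, fibre $X$ of $C$. Since I only need the \emph{non}-containment $C\not\subset BN^1_{4,(1,1,2)}$ and not an empty intersection, no analysis of the nodal boundary fibres is required here, which is the main simplification relative to the non-varying cases. Suppose $X$ lies in the divisor for the first marking, i.e. $h^0(\OO_X(p+q_1+2q_2))\ge 2$. Applying Riemann--Roch to the residual $K-(p+q_1+2q_2)\sim p+q_3+q_4-q_2$ and using that $q_2\notin\{p,q_3,q_4\}$, I would deduce that both $p+q_1+q_2$ and its residual $p+q_3+q_4$ move in a $g^1_3$. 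Passing to the canonical image of $X$ in $\PP^3$, which lies on a quadric $Q$, these $g^1_3$'s correspond to lines of the ruling(s) of $Q$. If $Q$ is a cone there is a unique $g^1_3$, forcing $q_1+q_2\sim q_3+q_4$ and hence a $g^1_2$, contradicting that $X$ is non-hyperelliptic (Proposition~\ref{prop:Teichhypdisj}).

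It remains to rule out the smooth-quadric case, and this is where the second lift enters. If $X$ also lies in $BN^1_{4,(1,1,2)}$ for the marking $(p,q_1,q_3)$, then the same Riemann--Roch computation shows that $p+q_1+q_3$ is a $g^1_3$ as well. Now $p+q_1+q_2$ and $p+q_1+q_3$ are two $g^1_3$'s sharing the subdivisor $p+q_1$. On a smooth quadric each ruling contains a unique line through the point $p$, and two lines from distinct rulings meet in a single point. If the two series belong to the same ruling then $q_2\sim q_3$, forcing $q_2=q_3$; if they belong to different rulings then the two corresponding lines both pass through the distinct points $p$ and $q_1$, which is impossible for two distinct lines. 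Either way I reach a contradiction, so $X$ cannot lie in the divisor for both markings simultaneously. Consequently $C$ is not contained in $BN^1_{4,(1,1,2)}$ for at least one of the two lifts, and the slope estimate of the first paragraph applies.

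Finally, the ``in particular'' clause is immediate: the hyperelliptic flat surfaces in this stratum arise from $\cQ(3,2,2,-1^{11})$ and give $L(C^\hyp)=14/5$ by Theorem~\ref{thm:calchyp}, while $7/3<14/5$. The step I expect to be the genuine obstacle is precisely the two-lift argument disposing of the smooth-quadric case: a single marking leaves the configuration of two rulings through $p$ entirely consistent, and it is only the comparison of the two $g^1_3$'s sharing $p+q_1$ that produces a contradiction. The one point demanding care is to confirm in each case that the derived $g^1_3$'s are honest base-point-free pencils (so that they really correspond to rulings), which again follows from non-hyperellipticity since any base point would produce a $g^1_2$.
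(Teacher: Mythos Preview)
Your proof is correct and follows essentially the same strategy as the paper's: pass to an unramified cover to label the zeros, consider two lifts to $\barmoduli[{4,3}]$ using the pointed Brill--Noether divisor with weight $2$ on a simple zero, and show that containment in the divisor for both lifts forces two distinct $g^1_3$'s sharing two points, which is impossible on the quadric containing the canonical curve. The only cosmetic difference is that the paper fixes the weight-$2$ simple zero and varies the weight-$1$ simple zero between the two lifts (obtaining $g^1_3$'s $p+s+u$ and $p+s+r$), whereas you fix the weight-$1$ simple zero and vary the weight-$2$ one (obtaining $p+q_1+q_2$ and $p+q_1+q_3$); the resulting contradiction on the quadric is identical, and the paper dispatches both the cone and the smooth case in one sentence rather than separating them as you do.
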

\par
\begin{proof}
By the same argument as in the proof for the stratum $\omoduli[3](2,1,1)$, 
we may pass to an unramified covering and label the zeros of $\omega$ such 
that $\omega_X \sim \OO_X(2p+q+r+s+u)$ along the whole family. First, using 
$p$, $q$ and $r$ we lift this covering
to a curve in $\barmoduli[{4,3}]$ that we continue to call $C$.
\par
If $C$ is not entirely contained in the divisor
$BN^1_{4,(1,2,1)}$, then $\bC\cdot BN^1_{4,(1,2,1)} \geq 0$ together with 
Proposition~\ref{prop:intomega} using $\kappa_{(2,1,1,1,1)}=13/18$ implies the claim.
If $C$ is entirely contained in the divisor $BN^1_{4,(1,2,1)}$, 
we can lift $C$ to $\barmoduli[{4,3}]$ alternatively by marking $p$, $q$ and $r$.
Again, if $C$ is not contained in $BN^1_{4,(1,2,1)}$, the claim holds.
\par
Suppose that $C$ is contained in the Brill-Noether divisor for
both lifts. Then for $(X,\omega)$ parameterized in $C$, 
by definition we have $h^0(\OO_X(p+2q+r)) \geq 2$, consequently
we obtain $h^0(\OO_X(s+u+p-q)) \geq 1$ and $h^0(\OO_X(s+u+p)) \geq 2$. For
the second lift we deduce from  $h^0(\OO_X(p+2q+u)) \geq 2$
that $h^0(\OO_X(s+r+p)) \geq 2$. Since $X$ is not hyperelliptic, 
the canonical map is an embedding and its image lies on a quadric surface in $\PP^3$.
Then the unique line on the quadric passing through $s$ and $p$ cannot
have a third intersection point with $C$ at both $r$ and $u$ for $r\neq u$.
\end{proof}
\par

\subsection{Varying sum in the stratum $\omoduli[4](1,1,1,1,1,1)$}

In this stratum we have
$$ L_{(1,1,1,1,1,1)} =  \frac{839}{377} \approx 2.22546, \quad  s_{(1,1,1,1,1,1)} = 
\frac{6675}{839}, 
\quad c_{(1,1,1,1,1,1)} = \frac{2225}{1508}.$$
The stratum contains the locus of hyperelliptic flat surfaces 
corresponding to the stratum $\cQ(2,2,2,-1^{10})$. Hence for a \teichm curve $C$
in this locus we have $L(C) = 5/2$.
\par
\begin{exs}{\rm
The \teichm curve $C$ generated by the
square-tiled surface with $$(\pi_r = (12)(3)(4)(5)(6)(78)(9)(10)(11\,12)(13)(14), \\ 
\pi_u = (132456798\,10\,11\,13)(12\,14) )$$
has $L(C)  = 125/58 \approx 2.15517$.
\par
The \teichm curve $C$ generated by the
square-tiled surface with $$(\pi_r = (12)(3)(4)(56)(7)(8)(9\,10)(11)(12), \\
\pi_u = (132457689\,11\,10\,12))$$ has 
$L(C)= 9/4=2.25$. 
\par
There exist \teichm curves $C$ generated by  
square-tiled surfaces in this stratum with
$$ L(C) \in \{125/58 \approx 2.15517, 419/194,  1019/470, 8498/3867 \approx 2.1975, 9/4 \}.$$
}\end{exs}
\par
\begin{prop}
A \teichm curve $C$ generated by a flat surface 
$(X,\omega) \in \obarmoduli[4](1,1,1,1,1,1)$
has 
$$ L(C) \leq 5/2.$$
\end{prop}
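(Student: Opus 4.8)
The plan is to run the slope estimate of Lemma~\ref{lem:upperbound} with the pointed Brill-Noether divisor $BN^1_{4,(1,1,2)}$ on $\barmoduli[{4,3}]$, following the scheme already used for $\omoduli[4](2,1,1,1,1)$ in Section~\ref{sec:21111}. First I would pass to a finite unramified cover of $C$ so that the six simple zeros of $\omega$ become sections $p_1,\dots,p_6$ with $\omega_X\sim\OO_X(p_1+\cdots+p_6)$ on every fibre; this affects neither the slope nor $L(C)$. The relevant constant is $\kappa_{(1^6)}=3/4$, so Proposition~\ref{prop:intomega} gives $\bC\cdot\omega_{i,\rel}=\tfrac{2}{3}\bigl(\bC\cdot\lambda-(\bC\cdot\delta)/12\bigr)$ for any marked simple zero. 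Substituting this and the class $BN^1_{4,(1,1,2)}=-\lambda+\omega_{1,\rel}+\omega_{2,\rel}+3\omega_{3,\rel}-\delta_\other$ from \eqref{BN(1,1,2)} into $\bC\cdot BN^1_{4,(1,1,2)}\geq 0$ yields $s(C)\leq 42/5$, and then $L(C)\leq 5/2$ by \eqref{eq:Lcs}. So the whole proposition reduces to exhibiting a lift of $C$ to $\barmoduli[{4,3}]$ whose image is not contained in $BN^1_{4,(1,1,2)}$.

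To find such a lift I would mark three of the simple zeros and argue by contradiction. If $(X,p_1,p_2,p_3)$ lies in $BN^1_{4,(1,1,2)}$ then $h^0(\OO_X(p_1+p_2+2p_3))\geq 2$, so Riemann-Roch gives $h^0(\OO_X(p_4+p_5+p_6-p_3))\geq 1$ and hence $h^0(\OO_X(p_4+p_5+p_6))\geq 2$; residually $h^0(\OO_X(p_1+p_2+p_3))\geq 2$ as well. Thus both triples $\{p_1,p_2,p_3\}$ and $\{p_4,p_5,p_6\}$ span $g^1_3$'s. For a generic smooth non-hyperelliptic member the canonical image sits on a quadric in $\PP^3$, and by Geometric Riemann-Roch together with the ruling analysis already carried out for $\omoduli[4](5,1)$ each such $g^1_3$-section is cut out by a ruling line. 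I would then repeat the computation for the second lift marking $(p_1,p_2,p_4)$, which puts $\{p_1,p_2,p_4\}$ on a ruling line too. Both ruling lines contain the images of $p_1$ and $p_2$; on a smooth quadric the ruling line through two points is unique, so they coincide and would meet the degree-six canonical curve in the four distinct points $p_1,p_2,p_3,p_4$, which is impossible, while on a quadric cone the unique $g^1_3$-fibre through $p_1$ cannot equal both $\{p_1,p_2,p_3\}$ and $\{p_1,p_2,p_4\}$. Hence $C$ cannot be contained in $BN^1_{4,(1,1,2)}$ for both lifts, and the estimate of the first paragraph applies.

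It remains to treat the hyperelliptic flat surfaces that this stratum contains as the image of $\cQ(2,2,2,-1^{10})$; for those Theorem~\ref{thm:calchyp} gives $L(C)=5/2$ outright, so the bound holds with equality and no divisor argument is needed. I expect the main obstacle to be the geometric step of the second paragraph: one must verify that membership in $BN^1_{4,(1,1,2)}$ genuinely forces the marked triples to lie on ruling lines of the canonical quadric and handle the singular-quadric sub-case, since a single lift only exhibits a $g^1_3$ — which every genus-four curve already carries — and it is exclusively the interplay of two different lifts sharing the two points $p_1,p_2$ that produces the contradiction.
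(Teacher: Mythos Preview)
Your approach is correct and essentially the same as the paper's, which simply says the argument is completely analogous to the one for $\omoduli[4](2,1,1,1,1)$. You use two lifts to $\barmoduli[{4,3}]$ with the divisor $BN^1_{4,(1,1,2)}$, deduce via Riemann-Roch two $g^1_3$'s sharing two marked zeros, and obtain a contradiction from the ruling geometry of the canonical quadric; the numerics $\kappa_{(1^6)}=3/4$, $s(C)\leq 42/5$, $L(C)\leq 5/2$ are exactly right, and the hyperelliptic case is disposed of by Theorem~\ref{thm:calchyp}.
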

\par
\begin{proof}
The argument is completely analogous to the stratum $\obarmoduli[4](2,1,1,1,1)$.
\end{proof}
\par

\section{Genus five}\label{sec:g=5}

In genus $5$ only few strata have a non-varying sum of Lyapunov exponents.
We summarize the results in Tables~\ref{cap:5part1} and \ref{cap:5part2}. Contrary to genus $4$ we do
not give an upper bound for the sum in all the (components of) strata where
the sum is varying but provide only one example which often comes from the locus of
hyperelliptic flat surfaces.

\begin{figure}
    \centering
      
$$
\begin{array}{|c|c||c|c|c|c|c|}

\hline
&&\multicolumn{5}{|c|}{}\\

\multicolumn{1}{|c|}{\text{Degrees}}&
\multicolumn{1}{|c||}{\text{Hyperelliptic}}&
\multicolumn{5}{|c|}{\text{Lyapunov exponents}}\\

\multicolumn{1}{|c|}{\text{of }}&
\multicolumn{1}{|c||}{\text{or spin}}&
\multicolumn{5}{|c|}{\text{}}\\

\cline{3-7}

\multicolumn{1}{|c|}{\text{zeros}}&
\multicolumn{1}{|c||}{\text{structure}}&
\multicolumn{2}{|c|}{\text{}}&
\multicolumn{3}{|c|}{\text{}}\\

\multicolumn{1}{|c|}{(d_1,\dots,d_n)}&
\multicolumn{1}{|c||}{}&
\multicolumn{2}{|c|}{\text{Component}}&
\multicolumn{3}{|c|}{\text{\teichm curves}}\\

\multicolumn{1}{|c|}{\text{}}&
\multicolumn{1}{|c||}{\text{}}&
\multicolumn{2}{|c|}{\text{}}&
\multicolumn{3}{|c|}{\text{}}\\

\cline{3-7}

\multicolumn{1}{|c|}{\text{}}&
\multicolumn{1}{|c||}{\text{}}&
\multicolumn{1}{|c|}{\approx}&
\multicolumn{1}{|c|}{\overset{g}{\underset{j=1}{\sum}} \lambda_j} &
\multicolumn{1}{|c|}{\approx}&
\multicolumn{1}{|c|}{\overset{g}{\underset{j=1}{\sum}} \lambda_j}&
\multicolumn{1}{|c|}{\text{Reference}}\\
[-\halfbls] &&&&&&\\
\hline &&&& \multicolumn{2}{|c|}{} & \\  [-\halfbls]
(8) & \text{hyperelliptic} & 2.777778 & \frac{25}{9} & \multicolumn{2}{c}{\text{Non-varying}} & \text{Thm.~\ref{thm:calchyp}}\\
[-\halfbls] &&&&\multicolumn{2}{|c|}{}&\\
\hline &&&& \multicolumn{2}{|c|}{} & \\  [-\halfbls]
(8) & \text{even} & 2.222222 & \frac{20}{9} & \multicolumn{2}{|c|}{\text{Non-varying}} & \text{Sec.~\ref{sec:8even}}\\
[-\halfbls] &&&&\multicolumn{2}{|c|}{}&\\
\hline &&&& \multicolumn{2}{|c|}{} & \\  [-\halfbls]
(8) & \text{odd} & 2.111111 & \frac{19}{9}  & \multicolumn{2}{|c|}{\text{Non-varying}} & \text{Sec.~\ref{sec:8odd}}\\
[-\halfbls] &&&&\multicolumn{2}{|c|}{}&\\
\hline &&&&&& \\ [-\halfbls]
(7, 1) & - & 2.227022 & \frac{2423}{1088} & 2.229062 &\frac{7133}{3200} & \text{Eq.~\eqref{eq:ST71}}\\
[-\halfbls] &&&&&&\\
\hline &&&&&& \\ [-\halfbls]
(6, 2) & \text{even} &  2.301983 & \frac{178429}{77511} & 2.619047 & \frac{55}{21} & \cQ({5, 1},-1^{10}) \\
[-\halfbls] &&&&&&\\
\hline &&&& \multicolumn{2}{|c|}{} & \\  [-\halfbls]
(6, 2) & \text{odd} &  2.190476 & \frac{46}{21} & \multicolumn{2}{|c|}{\text{Non-varying\ ?}} & - \\
[-\halfbls] &&&&\multicolumn{2}{|c|}{}&\\
\hline &&&&&& \\ [-\halfbls]
(6, 1, 1) & - & 2.285384 & \frac{59332837}{25961866} & 2.785714 & \frac{39}{14} & \cQ({5, 2},-1^{11}) \\
[-\halfbls] &&&&&&\\
\hline &&&& \multicolumn{2}{|c|}{} & \\  [-\halfbls]
(5, 3) & -   & 2.250000 & \frac{9}{4}  & \multicolumn{2}{|c|}{\text{Non-varying}} & \text{Sec.~\ref{sec:53}} \\
[-\halfbls] &&&&\multicolumn{2}{|c|}{}&\\
\hline &&&&&& \\ [-\halfbls]
(5, 2, 1) & -  & 2.300563 & \frac{4493}{1953} & 2.302594 & \frac{48541}{21081} & \text{Eq.\ \eqref{eq:ST521}} \\
[-\halfbls] &&&&&&\\
\hline &&&&&& \\ [-\halfbls]
(5, 1, 1, 1) & - &  2.340909 & \frac{103}{44}  &  2.337802 & \frac{12381}{5296} & \text{Eq.\ \eqref{eq:ST5111}} \\
[-\halfbls] &&&&&&\\
\hline &&&& \multicolumn{2}{|c|}{} & \\  [-\halfbls]
(4, 4) & \text{hyperelliptic} & 3.000000 & 3 & \multicolumn{2}{c}{\text{Non-varying}} & \text{Thm.~\ref{thm:calchyp}} \\
[-\halfbls] &&&&\multicolumn{2}{|c|}{}&\\
\hline &&&&&& \\ [-\halfbls]
(4, 4) & \text{even} & 2.311111 & \frac{104}{45} & 2.400000 & \frac{12}{5} & \text{Eq.\ \eqref{eq:ST44even}} \\
[-\halfbls] &&&&&&\\
\hline &&&&&& \\ [-\halfbls]
(4, 4) & \text{odd} & 2.191613 & \frac{228605}{104309} & 2.600000& \frac{13}{5} & \cQ({3, 3},-1^{10})\\
[-\halfbls] &&&&&&\\
\hline &&&&&& \\ [-\halfbls]
(4, 3, 1) & - & 2.306255 & \frac{438419}{190100} & 2.302715 & \frac{777627}{337700} & \text{Eq.\ \eqref{eq:ST431}}\\
[-\halfbls] &&&&&&\\
\hline &&&&&& \\ [-\halfbls]
(4, 2, 2) & \text{even} & 2.374007 & \frac{34981}{14735} & 2.800000 & \frac{14}{5} & \cQ({4, 3},-1^{11})\\
[-\halfbls] &&&&&&\\
\hline &&&&&& \\ [-\halfbls]
(4, 2, 2) & \text{odd}  & 2.260315 & \frac{538102}{238065} & 2.466666 &  \frac{37}{15} & \cQ({3, 1, 1},-1^9) \\
[-\halfbls] &&&&&&\\
\hline &&&&&& \\ [-\halfbls]
(4, 2, 1, 1) & - & 2.354799 & \frac{646039}{274350} & 2.633333 & \frac{79}{30} & \cQ({3, 2, 1},-1^{10})\\
[-\halfbls] &&&&&&\\ \hline
\end{array}
$$
\caption{Varying and non-varying sums in genus five, part I} \label{cap:5part1}
\end{figure}
\par

\begin{figure}
    \centering      
$$
\begin{array}{|c|c||c|c|c|c|c|}

\hline
&&\multicolumn{5}{|c|}{}\\

\multicolumn{1}{|c|}{\text{Degrees}}&
\multicolumn{1}{|c||}{\text{Hyperelliptic}}&
\multicolumn{5}{|c|}{\text{Lyapunov exponents}}\\

\multicolumn{1}{|c|}{\text{of }}&
\multicolumn{1}{|c||}{\text{or spin}}&
\multicolumn{5}{|c|}{\text{}}\\

\cline{3-7}

\multicolumn{1}{|c|}{\text{zeros}}&
\multicolumn{1}{|c||}{\text{structure}}&
\multicolumn{2}{|c|}{\text{}}&
\multicolumn{3}{|c|}{\text{}}\\

\multicolumn{1}{|c|}{(d_1,\dots,d_n)}&
\multicolumn{1}{|c||}{}&
\multicolumn{2}{|c|}{\text{Component}}&
\multicolumn{3}{|c|}{\text{Teichm\"uller curves}}\\

\multicolumn{1}{|c|}{\text{}}&
\multicolumn{1}{|c||}{\text{}}&
\multicolumn{2}{|c|}{\text{}}&
\multicolumn{3}{|c|}{\text{}}\\

\cline{3-7}

\multicolumn{1}{|c|}{\text{}}&
\multicolumn{1}{|c||}{\text{}}&
\multicolumn{1}{|c|}{\approx}&
\multicolumn{1}{|c|}{\overset{g}{\underset{j=1}{\sum}} \lambda_j} &
\multicolumn{1}{|c|}{\approx}&
\multicolumn{1}{|c|}{\overset{g}{\underset{j=1}{\sum}} \lambda_j}&
\multicolumn{1}{|c|}{\text{Reference}}\\
[-\halfbls] &&&&&& \\
\hline &&&&&& \\ [-\halfbls]
(4, 1^4) & -  & 2.393586 & \frac{640763}{267700} & 2.800000 & \frac{14}{5} & \cQ({3, 2, 2},-1^{11})\\
[-\halfbls] &&&&&&\\
\hline &&&&&& \\ [-\halfbls]
(3, 3, 2) & - &  2.318020 & \frac{61307}{26448} & 2.833333 & \frac{17}{6} & \cQ({6, 1},-1^{11}) \\
[-\halfbls] &&&&&&\\
\hline &&&&&& \\ [-\halfbls]
(3, 3, 1, 1) & -  & 2.358542 & \frac{47435}{20112} & 3.000000 & 3 & \cQ({6, 2},-1^{12}) \\
[-\halfbls] &&&&&&\\
\hline &&&&&& \\ [-\halfbls]
(3, 2, 2, 1) & - & 2.366588 & \frac{6049}{2556} &  2.362268 & \frac{2041}{864} & \text{Eq.\ \eqref{eq:ST3221}} \\
[-\halfbls] &&&&&&\\
\hline &&&&&& \\ [-\halfbls]
(3, 2, 1^3) & - & 2.405498 & \frac{700}{291} & 2.398764 & \frac{3495}{1457} & \text{Eq.\ \eqref{eq:ST32111}} \\
[-\halfbls] &&&&&&\\
\hline &&&&&& \\ [-\halfbls]
(3, 1^5) & - & 2.443023 & \frac{2101}{860}  & 2.431085 & \frac{77785}{31996} & \text{Eq.\ \eqref{eq:ST31to5}} \\
[-\halfbls] &&&&&&\\
\hline &&&&&& \\ [-\halfbls]
(2, 2, 2, 2) & \text{even} & 2.434379 & \frac{2096}{861} & 2.666666 & \frac{8}{3} & \cQ({4, 1, 1},-1^{10}) \\
[-\halfbls] &&&&&&\\
\hline &&&&&& \\ [-\halfbls]
(2, 2, 2, 2) & \text{odd} &  2.319961 & \frac{355309}{153153} & 2.333333 & \frac{7}{3} & \cQ({1, 1, 1, 1},-1^8) \\
[-\halfbls] &&&&&&\\
\hline &&&&&& \\ [-\halfbls]
(2^3, 1^2) & - &  2.413574 & \frac{79981}{33138} &2.833333 & \frac{17}{6} & \cQ({4, 2, 1},-1^{11}) \\
[-\halfbls] &&&&&&\\
\hline &&&&&& \\ [-\halfbls]
(2, 2, 1^4) & - & 2.451217 & \frac{266761}{108828} & 2.666666 & \frac{8}{3} & \cQ({2, 2, 1, 1},-1^{10})\\
[-\halfbls] &&&&&&\\
\hline &&&&&& \\ [-\halfbls]
(2, 1^6) & - &  2.487756 & \frac{35861}{14415} &2.833333 & \frac{17}{6} & \cQ({2, 2, 2, 1},-1^{11})\\
[-\halfbls] &&&&&&\\
\hline &&&&&& \\ [-\halfbls]
(1^8) & - & 2.523451 & \frac{235761}{93428} & 3.000000 & 3 & \cQ({2, 2, 2, 2},-1^{12})\\
[-\halfbls] &&&&&&\\ \hline
\end{array}
$$
\caption{Varying and non-varying sums in genus five, part II} \label{cap:5part2}
\end{figure}
\par
\medskip

\subsection{The stratum $\omoduli[5](8)^\even$} \label{sec:8even}

In the case  $\omoduli[5](8)^\even$ we have
$$ L_{(8)^\even} = 20/9 , \quad  s_{(8)^\even} = 8, \quad c_{(8)^\even} = 50/27. $$

\begin{prop}
Let $C$ be a \teichm curve generated by a flat surface in $\omoduli[5](8)^\even$. Then 
$\bC$ does not intersect the Brill-Noether divisor $BN^1_3$ on $\barmoduli[5]$. 
\end{prop}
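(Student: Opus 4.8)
The plan is to argue by contradiction: suppose $\bC$ meets $BN^1_3$ at a point corresponding to $(X,\omega)$, so that $X$ carries a $g^1_3$, say $A$ with $h^0(A)=2$, while $\omega_X \sim \OO_X(8p)$ for the single zero $p$ of $\omega$. By Corollary~\ref{cor:degfibers} the curve $X$ is irreducible, and by Proposition~\ref{prop:Teichhypdisj} it is not hyperelliptic; in particular $p$ is a smooth point, Riemann--Roch and Clifford's theorem (Theorem~\ref{thm:clifford}, using that an irreducible curve has no separating nodes and that any divisor on it is trivially balanced) are available, and $h^0(\OO_X(2p))=1$. The even spin structure is the theta characteristic $\eta=\OO_X(4p)$, and here lies the crucial input: since $4p$ is effective we have $h^0(\eta)\ge 1$, and since the parity is even $h^0(\eta)$ is even, forcing $h^0(\OO_X(4p))\ge 2$; Clifford's bound $h^0(\OO_X(4p))\le 3$ then pins it to $h^0(\OO_X(4p))=2$. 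Thus $|4p|$ is a $g^1_4$ having a member supported entirely at $p$.

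The heart of the argument is to exploit the interaction of this $g^1_4$ with the trigonal pencil $A$, and I would split on whether $|4p|$ is base-point-free. If $|4p|$ has a base point, that base point must be $p$ itself, since it lies on every member while the member $4p$ is supported only at $p$; hence $h^0(\OO_X(3p))=h^0(\OO_X(4p))=2$. But then Riemann--Roch gives $h^0(\OO_X(5p))=1+h^0(\OO_X(3p))=3$ and $h^0(\OO_X(6p))=2+h^0(\OO_X(2p))=3$, so that $6$ is a gap at $p$; this contradicts the fact that, $3$ being a non-gap, its double $6$ must again be a non-gap of the Weierstrass semigroup. Therefore $|4p|$ must be base-point-free.

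In the base-point-free case I would apply the base-point-free pencil trick to the pencil $L=\OO_X(4p)$ and the line bundle $A$: since $\deg(A\otimes L^{-1})=-1$ the multiplication map $H^0(L)\otimes H^0(A)\to H^0(L\otimes A)$ is injective, so $h^0(\OO_X(4p+A))\ge 4$. On the other hand, Riemann--Roch together with $K-4p-A\sim 4p-A$ gives $h^0(\OO_X(4p+A))=3+h^0(\OO_X(4p-A))$, so $\OO_X(4p-A)$ is effective and $4p\sim A+x$ for some point $x$. Comparing sections, $h^0(\OO_X(A+x))=h^0(\OO_X(4p))=2=h^0(A)$, so $x$ is a base point of $|A+x|=|4p|$, contradicting base-point-freeness. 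Both alternatives being impossible, $X$ cannot be trigonal, so $\bC$ is disjoint from $BN^1_3$; combined with the class \eqref{BN} and Lemma~\ref{lem:slopenonvarying} this yields $s(C)=s(BN^1_3)=8$, as required.

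The main obstacle is the base-point-free case: the naive hope that trigonality already forbids an extra $g^1_4$ is false in general, and the contradiction only surfaces after the pencil-trick computation collapses the supposed new $g^1_4$ back onto $A+x$, where it is forced to acquire a base point. A secondary point that needs care is that all of the Riemann--Roch and Clifford input must be justified for the possibly nodal (but irreducible and non-hyperelliptic) boundary curves; this is precisely what the irreducibility from Corollary~\ref{cor:degfibers} and the balanced hypotheses of Theorem~\ref{thm:clifford} are there to supply.
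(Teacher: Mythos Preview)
Your proof is correct and takes a genuinely different route from the paper's. Both arguments begin the same way: $X$ is irreducible by Corollary~\ref{cor:degfibers}, non-hyperelliptic by Proposition~\ref{prop:Teichhypdisj}, and the even spin $\eta = \OO_X(4p)$ together with Clifford forces $h^0(\OO_X(4p))=2$, so $|4p|$ is a $g^1_4$. From there the paper proceeds geometrically: a trigonal non-hyperelliptic canonical curve lies on a cubic scroll in $\PP^4$, which is a Hirzebruch surface $F_1$ or $F_3$; on $F_3$ the intersection numbers force $X\cdot e=-1$, contradicting irreducibility, while on $F_1$ the $g^1_4=|4p|$ is interpreted as projection from a plane $\Lambda$ and the possible curves in $\Lambda\cap F_1$ are analyzed case by case. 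Your argument bypasses all of this scroll geometry: the Weierstrass semigroup at $p$ (still a numerical semigroup on an integral curve) disposes of the case where $|4p|$ has a base point, and in the base-point-free case the pencil trick with $L=\OO_X(4p)$ and the trigonal bundle $A$ gives $h^0(\OO_X(4p)\otimes A)\ge 4$, which via Riemann--Roch forces $4p\sim A+x$, so that $x$ would be a base point of $|4p|$ after all. Your route is more elementary and uniform over the nodal boundary fibers, needing only Riemann--Roch, Clifford, and the Koszul sequence; the paper's approach, by contrast, exercises the projective geometry of trigonal canonical curves that it reuses in the treatment of $\omoduli[5](5,3)$.
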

\par
\begin{proof}
\teichm curves in this stratum are disjoint from the hyperelliptic locus even at the boundary of
$\barmoduli[5]$, since the hyperelliptic component is a different component
and by Proposition~\ref{prop:Teichhypdisj}. Suppose $(X,\omega)$ is a flat surface contained in the intersection of $\bC$
and $BN^1_3$. Since $X$ is trigonal (possibly nodal but irreducible) and is not hyperelliptic, 
its canonical image lies on a cubic scroll surface in $\PP^4$ whose rulings are spanned by 
the sections of the $g^1_3$ (see e.g. \cite[Section 2.10]{reid}). This scroll surface can be either smooth or singular, corresponding to Hirzebruch surfaces $F_n$ of two types, respectively (see \cite[Chapter IV]{beauville} or \cite[Section 2]{coskun} for preliminaries on Hirzebruch surfaces). Here we follow the notation in \cite{coskun}. 
\par
Suppose the scroll surface is smooth as the embedding of the Hirzebruch 
surface $F_1$ by the linear system $| e+ 2f |$, where 
$$e^2 = -1, \quad e\cdot f = 1, \quad f^2 = 0.$$ 
Then $X$ has class $3e + 5f$. Note that $4p$ admits a $g^1_4$, 
which comes from the projection of $X$ from a plane $\Lambda$ 
to a line in $\PP^4$. This plane $\Lambda$ 
intersects $X$ at $\geq 4$ points (with multiplicity) and the 
intersection contains the residual $4p$. But $F_1$ has degree three, 
so the intersection $F_1 \cap \Lambda$ consists of a curve 
$B$ with possibly finitely many points outside $B$. If $B$ is 
a ruling, then $B\cdot X = 3$ and $\Lambda$ also intersects $X$ at 
a point outside $B$, such that $\Lambda$ is spanned by 
$B$ and that point. Then we cannot have $4p \subset \Lambda \cap X$, 
contradiction. If $B$ has higher degree, it can only be a conic 
(or its degeneration) of class $e+f$. Then $B\cdot X = 5$, so 
$\Lambda\cap X = 4p + q$ admits a $g^2_5$ by 
Geometric Riemann-Roch. For $q\neq p$ we obtain $h^0(\OO_X(4p-q)) = 2$ 
and hence $h^0(\OO_X(4p)) = 3$, contradiction. For $q = p$, the residual $3p$ 
admits a $g^1_3$, so it gives rise to a ruling $L$ on the cubic scroll. Then $L$ and $B$ are both
tangent to $X$ at $p$. But $L\cdot B = f\cdot (e+f) = 1$, leading to a contradiction. 
\par
If the scroll is singular, it is isomorphic to $F_3$ by the linear system $|e+3f|$, where 
$$e^2 = -3, \quad e\cdot f = 1, \quad f^2 = 0.$$
Since $X\cdot f = 3$ and $X\cdot (e+3f) = 8$, it has class $3e + 8f$. 
Then $X\cdot e = -1$, which implies that $X$ consists of
$e$ union a curve of class $2e+8f$, contradicting the irreducibility of $X$. 
\end{proof}
\par
\begin{proof} [{Proof of Theorem~\ref{thm:g5main}, Case ${\omoduli[5]}(8)^\even$}]
By the proposition we have $\bC\cdot BN^1_3  = 0$. Since this divisor has slope equal to $8$ by~\eqref{BN}, the \teichm
curve $C$ has the same slope $s(C) = 8$.
%
\end{proof}

\subsection{The stratum $\omoduli[5](8)^\odd$} \label{sec:8odd}

In the case  $\omoduli[5](8)^\odd$ we have
$$ L_{(8)^\odd} = 19/9, \quad  s_{(8)^\odd} = 148/19, \quad c_{(8)^\odd} = 37/27. $$

\begin{prop}
Let $C$ be a \teichm curve generated by a flat surface $(X,\omega) \in \omoduli[5](8)^\odd$ 
lifted to $\barmoduli[{5,1}]$ using the zero of $\omega$. Then $\bC$ does not intersect the divisor 
$\Nfold^1_{5,4}(1)$. 
\end{prop}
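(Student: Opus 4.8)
The plan is to translate membership in $\Nfold^1_4(1) = BN^1_{4,(3)}$ into a statement about special divisors supported at the marked point and then contradict the oddness of the spin structure. Recall from \eqref{eq:Nfold} that a pointed curve $(X,p)$ lies in this divisor precisely when $X$ carries a $g^1_4$ of the form $|3p+t|$, i.e. $h^0(\OO_X(3p+t))\geq 2$ for some point $t\in X$. So I would assume that $(X,p)\in\bC\cap\Nfold^1_4(1)$ and aim to produce a contradiction. First I would record the standing facts: by Corollary~\ref{cor:degfibers} the fiber $X$ is irreducible (smooth or nodal), by Proposition~\ref{prop:Teichhypdisj} it is non-hyperelliptic, and since $\omega_X\sim\OO_X(8p)$ the line bundle $\eta=\OO_X(4p)$ is the odd theta characteristic cutting out the component. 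Using Clifford's theorem (Theorem~\ref{thm:clifford}, which applies because $4p$ is automatically balanced on an irreducible curve and $\deg(4p)=4$) together with the non-hyperelliptic hypothesis, I would pin down $h^0(\OO_X(4p))=1$.

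The core of the argument will be a short chain of Riemann-Roch steps anchored at $p$. Since $\deg(3p+t)=4=g-1$, Serre duality with the dualizing sheaf (valid on the nodal fibers) gives $h^0(\OO_X(3p+t))=h^0(\omega_X(-3p-t))=h^0(\OO_X(5p-t))$, so $h^0(\OO_X(5p-t))\geq 2$. Dropping $t$ and using $h^0(\OO_X(5p))\leq h^0(\OO_X(4p))+1=2$, I would conclude that $h^0(\OO_X(5p))=h^0(\OO_X(5p-t))=2$. By Proposition~\ref{bpf} this equality means that $t$ is a base point of $|5p|$, so every divisor in $|5p|$ contains $t$; but the tautological section cuts out the divisor $5p$, which is supported only at $p$, whence $t=p$. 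In that case $\OO_X(5p-t)=\OO_X(4p)$ has $h^0=1$, contradicting $h^0(\OO_X(5p-t))=2$. This contradiction would give $\bC\cap\Nfold^1_4(1)=\emptyset$.

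The one delicate point — and the step I expect to require the most care — is the input $h^0(\OO_X(4p))=1$. Clifford's inequality alone only yields $h^0(\OO_X(4p))\leq 3$, and oddness of the spin structure leaves both values $1$ and $3$ open; excluding $h^0(\OO_X(4p))=3$ is exactly where non-hyperellipticity enters, since $h^0=3$ would make $\OO_X(4p)$ a $g^2_4$ and force a degree-two map onto a conic, hence a hyperelliptic $X$. On the nodal fibers I would have to check that this exclusion, together with the base-point criterion of Proposition~\ref{bpf}, survives in the dualizing-sheaf formulation, which is guaranteed by the absence of separating nodes (Corollary~\ref{cor:zerosatinf}) and by Theorem~\ref{thm:clifford}(iii); the remaining links in the chain are formal, since $p$ and $t$ are smooth points and $\OO_X(5p-t)$ is an honest line bundle. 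Once the Proposition is in hand, $\bC\cdot\Nfold^1_4(1)=0$ combined with \eqref{eq:Nfold} and Proposition~\ref{prop:intomega} should yield the slope $s(C)=148/19$, exactly as in the $(8)^\even$ case.
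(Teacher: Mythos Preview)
Your argument is correct and follows essentially the same route as the paper's proof: both reduce to showing $h^0(\OO_X(4p))=1$, then use Riemann--Roch on $\omega_X\sim\OO_X(8p)$ to get $h^0(\OO_X(5p-t))=h^0(\OO_X(5p))=2$, and conclude that $t$ is a base point of $|5p|$, forcing $t=p$ and a contradiction. Your treatment is in fact more careful than the paper's in two places: you explicitly justify $h^0(\OO_X(4p))=1$ by invoking oddness together with the equality case of Clifford (where the paper simply writes ``by Clifford's theorem''), and you spell out why the base-point argument and Clifford bound persist for the irreducible nodal fibers.
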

\par
\begin{proof}
Suppose that $(X, \omega)$ is contained in the intersection of $\bC$ with $\Nfold^1_{5,4}(1)$. 
Note that $X$ is not hyperelliptic, as this component and the hyperelliptic component are disjoint
and by Proposition~\ref{prop:Teichhypdisj}. 
Recall that $\Nfold^1_{5,4} (1)\subset \barmoduli[{5,1}]$ parameterizes curves 
that admit a $g^1_4$ given by $3p+q$, where $p$ is the marked point and $q$ is a random point. 
Then it implies that $h^0(\OO_X(3p+q)) = 2$ and $h^0(\OO_X(4p)) = 1$ by Clifford's theorem, hence 
we have $q\neq p$. 
By Riemann-Roch, we have $h^0(\OO_X(5p-q)) = 2$. Since 
$h^0(\OO_X(5p)) = 2 = h^0(\OO_X(5p-q)) $, it implies that 
$q$ is a base point of $|\OO_X(5p)|$, which is impossible. 
\end{proof}

\begin{proof}[{Proof of Theorem~\ref{thm:g5main}, Case ${\omoduli[5]}(8)^\odd$}]
By the proposition we have 
$$\bC\cdot \Nfold^1_{5,4}(1) = 0.$$ 
It now suffices to plug the result of Proposition~\ref{prop:intomega} 
with $m_1=8$ into the divisor class of $\Nfold^1_{5,4}(1)$ in \eqref{eq:Nfold} to obtain the desired numbers.
\end{proof}

\subsection{The stratum $\omoduli[5](5,3)$} \label{sec:53}

In the case $\omoduli(5,3)$ we have  
$$L_{(5,3)} = 9/4, \quad s_{(5,3)}  = 209/27, \quad c_{(5,3)}  = 209/144.$$
\par
\begin{prop} \label{prop:53noint}
Let $C$ be a \teichm curve generated by a flat surface 
$(X,\omega) \in \omoduli[5](5,3)$,
lifted to $\barmoduli[{5, 2}]$ by the zeros of $\omega$. Then $\bC$
does not intersect the divisor $BN^1_{4,(1,2)}$.
\end{prop}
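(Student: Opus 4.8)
The plan is to assume, for contradiction, that some pointed curve $(X,p,q)$ lies in $\bC\cap BN^1_{4,(1,2)}$, where $\mathrm{div}(\omega)=5p+3q$ with $p$ the zero of order five (the weight-one marked point) and $q$ the zero of order three (the weight-two marked point), and to produce from this a linear series on $X$ that cannot exist. First I would record the two structural inputs. Since both zero orders are odd, the hyperelliptic involution could neither fix $p$ or $q$ (a holomorphic differential vanishes to even order at a Weierstrass point) nor exchange them (they have distinct orders); hence $X$ admits no hyperelliptic involution, i.e. $X$ is non-hyperelliptic, and by Proposition~\ref{prop:Teichhypdisj} together with Corollaries~\ref{cor:zerosatinf} and~\ref{cor:degfibers} the same holds at every point of $\bC$, where $X_\infty$ is either irreducible or has two components of genus $(2,1)$ or $(1,0)$ glued at $3$ resp.\ $5$ non-separating nodes. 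In all these cases the dual graph is at least three-connected, so by Proposition~\ref{prop:dualgraph} the dualizing sheaf is very ample and $\phi_K$ embeds $X$ as a degree-$8$ canonical curve in $\PP^4$; Riemann--Roch and Clifford's inequality remain available through Theorem~\ref{thm:clifford}, the relevant divisors being balanced as in the treatment of $\omoduli[4](2,2,2)^\odd$.

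Next I would translate the Brill--Noether condition. Membership in $BN^1_{4,(1,2)}$ yields a point $s$ with $h^0(\OO_X(p+2q+s))\ge 2$, and Serre duality against $\omega_X\sim\OO_X(5p+3q)$ turns the residual into $h^0(\OO_X(4p+q-s))\ge 2$. The key geometric observation is that the canonical hyperplane $H_0$ cut by $\omega$, which satisfies $H_0\cdot X=5p+3q$, contains both $\phi_K(p)$ and the osculating plane to $X$ at $q$ (because the contact there is three), hence contains the span $\langle\phi_K(p),\,T_qX\rangle$ of the tangent line $T_qX$ together with $p$. By Geometric Riemann--Roch the existence of $s$ forces $\phi_K(p+2q+s)$ to span at most a $\PP^2$; since this span already contains $\langle\phi_K(p),\,T_qX\rangle\subset H_0$ and $H_0\cap X=\{p,q\}$, the completing point must satisfy $s\in\{p,q\}$, unless the span degenerates to a line, that is $p\in T_qX$.

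The trisecant degeneration $p\in T_qX$ is the clean case: it produces a $g^1_3$ given by $|p+2q|$, and then the residual satisfies $h^0(\OO_X(4p+q))=h^0(\OO_X(p+2q))+1\ge 3$, a $g^2_5$ of degree $5<2g-2=8$; Clifford's theorem forces the equality case only for hyperelliptic curves, contradicting the first step. It then remains to treat $s=q$ and $s=p$, i.e.\ $h^0(\OO_X(p+3q))\ge 2$ and $h^0(\OO_X(2p+2q))\ge 2$. In each I would first ask whether the relevant $g^1_4$ has a base point: a base point gives a $g^1_3$, so $X$ is trigonal, its canonical image lies on a smooth cubic scroll $F_1\hookrightarrow\PP^4$ with $X\sim 3e+5f$, and I would run the ruling-and-directrix analysis of Section~\ref{sec:8even}, now using $\omega_X\sim H|_X\sim 5p+3q$ to contradict the prescribed contact orders at $p$ and $q$. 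If instead the $g^1_4$ is base-point-free, then $X$ is a complete intersection of three quadrics and the condition says that two tangent lines (for $s=p$), or the osculating plane at $q$ together with $p$ (for $s=q$), are coplanar inside $H_0$; here I would use the net of quadrics through $X$ to argue that such a plane must lie on a quadric of the net, forcing that quadric to be too singular to admit $5p+3q$ as a hyperplane section.

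Once disjointness is established, $\bC\cdot BN^1_{4,(1,2)}=0$ combined with the class~\eqref{eq:Nfold2}, Proposition~\ref{prop:intomega} (with $\kappa_{(5,3)}=115/144$), and \eqref{eq:Lkappac}--\eqref{eq:Lcs} gives $s(C)=209/27$ and $L(C)=9/4$, as tabulated. The hard part is the base-point-free, non-trigonal sub-cases of $s\in\{p,q\}$: dispatching the coplanar-tangent and osculating-plane configurations through the net-of-quadrics geometry, and checking that this analysis together with the Riemann--Roch and Clifford inputs survives verbatim on the two-component nodal boundary curves, where one must work throughout with the dualizing sheaf and the balanced-divisor hypotheses of Theorem~\ref{thm:clifford}.
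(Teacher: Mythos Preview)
Your attempt has a genuine gap, and the overall route diverges from the paper's in a way that matters.

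\textbf{The weight assignment.} You put weight $2$ on the $3$-fold zero $q$, so your hypothesis is $h^0(\OO_X(p+2q+s))\ge 2$. The paper does the opposite: in its proof the condition is $h^0(\OO_X(2p+q+r))\ge 2$ with $p$ the $5$-fold zero, i.e.\ weight $2$ sits on $p$. This is not cosmetic. The two pointed Brill--Noether divisors have different classes, and only the paper's choice yields $s(C)=209/27$ after plugging into Proposition~\ref{prop:intomega}; disjointness from both would force two incompatible slopes, so at most one of the two statements can hold. You are attempting to prove the one that is actually false.

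\textbf{The Clifford step.} Even setting the marking aside, your argument in the case $p\in T_qX$ is incorrect. You obtain $h^0(\OO_X(4p+q))\ge 3$, a $g^2_5$, and claim Clifford forces hyperellipticity. But for degree $5$ Clifford only gives $h^0\le 3$; having $h^0=3$ is strictly below the equality line $h^0-1=d/2$, so nothing is forced. A non-hyperelliptic genus-$5$ curve carrying a $g^2_5$ is precisely a trigonal curve (the $g^2_5$ is the Serre residual of the $g^1_3$), so this case does not terminate---it feeds straight into the trigonal analysis you postpone. Likewise your $s\in\{p,q\}$ non-trigonal sub-cases are left as a sketch about nets of quadrics, which is exactly the hard part.

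\textbf{How the paper proceeds.} From $h^0(\OO_X(2p+q+r))=2$ and Riemann--Roch one gets $h^0(\OO_X(3p+2q-r))=2$. A short case check on $r$ shows that in every case both $|3p+q|$ and $|2p+2q|$ are $g^1_4$'s; comparing the two planes they span in the canonical $\PP^4$ forces $|2p+q|$ to be a $g^1_3$, so $X$ is always trigonal---there is no separate ``complete intersection of quadrics'' branch. The residual $|3p+2q|$ then maps $X$ birationally to a plane quintic with one double point $u$, and the $g^1_3$ is projection from $u$. Since $2p+q$ is a fibre of that projection while the line $\overline{pq}$ cuts the quintic in $3p+2q$, B\'ezout forces $u\in\{p,q\}$, and either choice gives $p\sim q$. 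Your opening reduction via $H_0\cap X$ is a nice idea, but the paper's route reaches a single uniform trigonal configuration and finishes it cleanly, whereas your plan branches into several sub-cases, one handled incorrectly and the others not completed.
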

\par
\begin{proof}
Note that the degenerate fibers of the family over $\bC$ are either irreducible
or consist of two components connected by an odd number ($\geq 3$) of nodes
by Proposition~\ref{cor:degfibers}.
Moreover by Proposition~\ref{prop:Teichhypdisj} the degenerate fibers are not hyperelliptic.
Consequently the dual graph of $X$ is three-connected and 
$\omega_X$ is very ample by Proposition~\ref{prop:dualgraph}.
\par
Suppose that contrary to the claim, $(X,p,q)$ is contained in the intersection of $\bC$ and $BN^1_{4,(1,2)}$, 
i.e. $h^0(\OO_X(2p+q+r)) = 2$ for some $r\in X$. Since $\omega_X \sim \OO_X(5p + 3q)$, by Riemann-Roch 
we have $h^0(\OO_X(3p+2q - r)) = 2$. For $r = p$ or $q$, these equalities reduce to 
$h^0(\OO_X(2p+2q)) = 2$ and $h^0(\OO_X(3p+q)) = 2$. If $r\neq p, q$, then $r$ is not a base point 
of $|\OO_X(3p+2q)|$, hence $h^0(\OO_X(3p+2q)) = 3$ and 
$h^0(\OO_X(2p+q)) = 2$. Then we still have $h^0(\OO_X(2p+2q)) = h^0(3p+q) \geq 2$, hence they are equal 
to $2$ by Clifford's theorem.  In any case, $3p+q$ and $2p+2q$ span two different planes with the 
corresponding contact orders at $p$ and $q$ to the canonical image of $X$ in $\PP^4$. 
The two planes contain a common line spanned by $p, q$ whose intersection with $X$ is $2p+q$. 
By Geometric Riemann-Roch, $2p+q$ gives rise to a $g^1_3$, hence $X$ is trigonal. 
\par
For a trigonal genus 5 curve $X$ with $\omega_X$ very ample, as we have seen in Section~\ref{sec:8even}, 
its canonical image is contained in a cubic scroll surface in $\PP^4$. 
The residual $g^2_5$ given by $3p+2q$ maps $X$ to a plane quintic $Y$, whose image differs 
from $X$ at a double point $u$ (like a node or cusp), for the arithmetic genus of $Y$ is 6. 
The unique $g^1_3$ on $X$ is given by intersections of lines passing through $u$ with $Y$ (subtracting $2u$ from the base locus). But $2p+q$ is contained in the $g^1_3$, 
and the line spanned by $p,q$ has contact order 3 at $p$ and 2 at $q$ to $Y$, hence $u$ must be $p$ or $q$ by B\'{e}zout. For $u = q$, subtracting $2u$ from $3p+2q$, we know that 
$3p$ is also in the $g^1_3$, hence $3p \sim 2p+ q$, $p\sim q$, impossible. For $u = p$, we have $p+2q$ is in the $g^1_3$. Hence $p+ 2q\sim 2p+q$, which implies $p\sim q$ and this is also impossible. 
\end{proof}
\par
\begin{proof}[{Proof of Theorem~\ref{thm:g5main}, Case ${\omoduli[5]}(5,3)$}]
Proposition~\ref{prop:53noint} says that $\bC\cdot BN^1_{4,(1,2)} = 0$ for a \teichm curve 
$C$ in this stratum. Using the divisor class of $BN^1_{4,(1,2)}$ in \eqref{eq:Nfold2} together with Proposition~\ref{prop:intomega}, 
the result follows immediately.   
\end{proof}

\subsection{The stratum $\omoduli[5](6,2)^{\odd*}$} \label{sec:62}

In the case $\omoduli[5](6,2)^\odd$ we have  
$$L_{(6,2)^\odd} = 46/21, \quad s_{(6,2)^\odd} =  176/23, \quad c_{(6,2)^\odd} = 209/144. $$
\par
Based on numerical values on individual \teichm curves, we believe that the sum of 
Lyapunov exponents is non-varying in this stratum. But we have not discovered a divisor to carry out the desired disjointness argument. 
\par

\subsection{Examples of square-tiled surfaces in $g=5$ and $g=6$} \label{sec:5vary}

In this section we list examples of square-tiled surfaces in $g=5$ to 
justify that the sum of Lyapunov exponents in the remaining strata
is indeed varying.
\par
In the stratum $\omoduli[5](7,1)$ varying sum can be checked using the square-tiled surface
\begin{equation} \label{eq:ST71}
(\pi_r = (1 2 3 4 5 6 7 8 9\,10), \pi_u=(1 5 9 6)(2 4 7\,10)).
\end{equation}
In the stratum $\omoduli[5](5,2,1)$ varying sum can be checked using the square-tiled surface
\begin{equation} \label{eq:ST521}
(\pi_r = (1 2 3 4 5 6 7 8 9\,10\, 11), \pi_u=(1\, 11)(23)(46)(7 9)).
\end{equation}
In the stratum $\omoduli[5](5,1,1,1)$ varying sum can be checked using the square-tiled surface
\begin{equation} \label{eq:ST5111}
(\pi_r = (1 2 3 4 5 6 7 8 9\,10\, 11\, 12), \pi_u=(1\, 12)(23)(46)(8 10)).
\end{equation}
In the stratum $\omoduli[5](4,4)^\even$ varying sum can be checked using the square-tiled surface
\begin{equation} \label{eq:ST44even}
(\pi_r = (1 2 3 4 5 6 7 8 9\,10), \pi_u=(1 \, 10)(2 9)(3568)).
\end{equation}
In the stratum $\omoduli[5](4,4)^\odd$ varying sum can be cross-checked
using, besides the hyperelliptic locus, the square-tiled surface
\begin{equation} \label{eq:ST44odd}
(\pi_r = (1 2 3 4 5 6 7 8 9\,10), \pi_u=(1\, 10)(2 3)(5 6)(7 8)).
\end{equation}
\par
In the stratum $\omoduli[5](4,3,1)$ varying sum can be checked using the square-tiled surface
\begin{equation} \label{eq:ST431}
(\pi_r = (1 2 3 4 5 6 7 8 9\,10 \, 11 \, 12), \pi_u=(11\, 1 9 4)(2\, 10 \, 3 5 6)(7\, 12)).
\end{equation}
In the stratum $\omoduli[5](3,2,2,1)$ varying sum can be checked using the square-tiled surface
\begin{equation} \label{eq:ST3221}
(\pi_r = (1 2 3)(4 5 6 7 8 9\,10 \, 11 \, 12), \pi_u=(1\, 11)(10\, 5\, 13)(2 7)).
\end{equation}
In the stratum $\omoduli[5](3,2,1,1,1)$ varying sum can be checked using the square-tiled surface
\begin{equation} \label{eq:ST32111}
(\pi_r = (1 2 3 4 5 6 7 8 9\,10 \, 11 \, 12), \pi_u=(1\, 12)(24)(57)(8\, 10)).
\end{equation}
In the stratum $\omoduli[5](3,1,1,1,1,1)$ varying sum can be checked using the square-tiled surface
\begin{equation} \label{eq:ST31to5}
(\pi_r = (1 2 3 4 5 6 7 8 9\,10 \, 11 \, 12 \, 13),  \pi_u=(1\,14)(24)(6 8)(10\, 12)).
\end{equation}
\par
To indicate that the phenomenon of non-varying sum of Lyapunov exponents
is restricted to low genus and special loci, such as e.g.\ the hyperelliptic
locus, we show that already in $g=6$ the best candidates fail.
\par
\begin{prop}\label{prop:6vary}
For $g=6$ the sum of Lyapunov exponents is varying in the strata
$\omoduli[6](10)^\odd$ and $\omoduli[6](10)^\even$. 
\end{prop}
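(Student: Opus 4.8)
The plan is to reverse the non-varying strategy used elsewhere in the paper: rather than exhibiting a divisor disjoint from all \teichm curves, I would display, inside each of the two non-hyperelliptic components $\omoduli[6](10)^\even$ and $\omoduli[6](10)^\odd$, two concrete \teichm curves whose sums of Lyapunov exponents differ. Since any two distinct values already witness the varying phenomenon, it suffices to produce a single disagreeing pair in each component. The natural source of explicit examples is square-tiled surfaces, as in Section~\ref{sec:5vary}: for these the full cusp data of the Veech group is combinatorial, so $c(C)$ and hence $L(C)$ can be computed directly by the algorithm of \cite{ekz} via the relation of Theorem~\ref{thm:ekz}, $L(C) = \kappa_{(10)} + c(C)$, where $\kappa_{(10)} = \tfrac{1}{12}\cdot\tfrac{10\cdot 12}{11} = \tfrac{10}{11}$.

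First I would enumerate candidate square-tiled surfaces lying in the stratum $\omoduli[6](10)$. Following Section~\ref{sec:sqtiled}, a degree $d$ cover of the standard torus is specified by a pair $(\pi_u,\pi_r)$ generating a transitive subgroup of $S_d$; by Riemann--Hurwitz the cover lies in $\omoduli[6](10)$ precisely when the commutator $\pi_u^{-1}\pi_r^{-1}\pi_u\pi_r$ is a single $11$-cycle, producing a unique zero of order $10$ on a genus $6$ surface. I would run through covers of small degree ($d\geq 11$) searching for such pairs.

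For each surviving candidate I would carry out two checks and one computation. The checks are (i) that the surface is \emph{not} hyperelliptic, so that it lands in one of the two spin components rather than in the separate hyperelliptic component; and (ii) the determination of the spin parity, which sorts the example into $\omoduli[6](10)^\even$ or $\omoduli[6](10)^\odd$. The parity is the Arf invariant of the quadratic form attached to the translation structure and is computable combinatorially from the winding numbers of a symplectic basis of loops. The computation is that of $L(C)$ via the cusps, using the programs of Zorich and Delecroix acknowledged above. Having sorted the examples by component, I would select within the even component two surfaces whose $L(C)$ values are unequal, and likewise within the odd component, which establishes the proposition.

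I expect the genuinely delicate step to be the bookkeeping of the spin parity rather than the Lyapunov computation itself: the Siegel--Veech and Lyapunov evaluation is a turnkey application of \cite{ekz}, but correctly placing each square-tiled surface into the even or odd component---and excluding accidental hyperellipticity---requires an honest Arf-invariant calculation for a stratum with a single high-order zero, where the combinatorics is least forgiving. A secondary subtlety is ensuring the disagreeing pair really lies in a single component; once two explicit surfaces of the same parity and of non-hyperelliptic type exhibit distinct sums, the varying conclusion is immediate.
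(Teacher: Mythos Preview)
Your approach is correct and follows the same basic template as the paper---exhibit explicit square-tiled surfaces and compute $L(C)$ via the algorithm of \cite{ekz}---but the execution differs in one useful respect. The paper compares, in each component, a \emph{single} square-tiled \teichm curve against the stratum value $L_{(10)^\odd}$ (resp.\ $L_{(10)^\even}$) computed from \cite{emz}: since the two numbers disagree, the sum is varying by definition. Your plan instead compares two square-tiled \teichm curves to one another within each component, bypassing the \cite{emz} stratum computation entirely; this is also valid, since by Proposition~\ref{localglobal} the stratum value is a limit of square-tiled values, so two distinct values force at least one to differ from the stratum. The paper's route is more economical (one surface per component, plus a number already tabulated), while yours is more self-contained but requires producing and certifying twice as many examples. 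Your emphasis on the Arf-invariant bookkeeping is apt: correctly placing each surface in the even or odd non-hyperelliptic component is indeed the only place a slip is likely.
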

\par
\begin{proof}
For $\omoduli[6](10)^\odd$ the sum for the measure supported on the
whole stratum is $L_{(10)^\odd} = \frac{82680540070}{35169130909} $ using \cite{emz}, but the square-tiled surface
\begin{equation} \label{eq:10odd}
(\pi_r = (1 2 3 4 5 6 7 8 9\,10 \, 11), \pi_u=(1 3 5 7 9\,11))
\end{equation}
provides an example with $L(C) = \frac{3166}{1375}$. In the even case $L_{(10)^\even} =
\frac{9085753953118}{3770001658049}$ but the square-tiled surface
\begin{equation} \label{eq:10even}
(\pi_r = (1 2 3 4 5 6 7 8 9\,10 \, 11), \pi_u= (1 5 7 9\,11)(2 4))
\end{equation}
gives an example with $L(C) = \frac{244729}{101893}$. 
\end{proof}
\par

\section{Hyperelliptic strata and moduli spaces of pointed curves} \label{sec:hyp}
Using \teichm curves in the hyperelliptic strata we reverse our engine to present an application 
for the geometry of moduli spaces of pointed curves. 
\par
In the study of the geometry of a moduli space, a central question is to ask about the extremality of a divisor class, e.g. if it has non-negative intersection numbers
with various curve classes on the moduli space. Now consider the moduli space $\barmoduli[g,1]$ of genus $g$ curves with one marked point. Define a divisor class 
$$D_1 = 4g(g-1) \omega_\rel   - 12\lambda + \delta, $$
where $\delta$ is the total boundary class.  
Let $\cX \to B$ be a complete one-dimensional family of stable one-pointed curves with smooth generic fibers. Harris \cite[Theorem 1]{harrisfamily}
showed that $D_1\cdot B$ is always non-negative and asked further if this is optimal, i.e. if there exists such a family $B$ satisfying $D_1\cdot B = 0$. 
The reader may also refer to \cite[(6.31), (6.34)]{harrismorrison} for an expository explanation. Define another divisor class on the moduli space $\barmoduli[g,2]$ of genus $g$ curves with two marked points: 
$$ D_2 = (g^2-1)(\psi_{1} + \psi_{2}) - 12\lambda + \delta, $$
where $\psi_i$ is the first Chern class of the cotangent line bundle associated to the $i$-th marked point. 
By a completely analogous argument as in \cite{harrisfamily}, one easily checks that $D_2$ has non-negative intersection with 
any complete one-dimensional family of stable two-pointed curves with smooth generic fibers. Similarly one can ask if this is optimal. 
Below we show that in both cases the zero-intersection can be attained. 
\par
\begin{theorem}\label{thm:extremal}
Let $C_1, C_2$ be \teichm curves in generated by flat surfaces in $\omoduli(2g-2)^\hyp$ and $\omoduli(g-1,g-1)^\hyp$, lifted to $\barmoduli[g,1]$ and $\barmoduli[g,2]$ using the zeros of Abelian differentials, 
respectively. Then we have 
$$\bC_1\cdot D_1 = 0, $$
$$ \bC_2 \cdot D_2 = 0. $$
\end{theorem}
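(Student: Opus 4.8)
The plan is to reduce both identities to the self-intersection formula of Proposition~\ref{prop:intomega}, after which the vanishing is a one-line cancellation forced by the coefficients appearing in $D_1$ and $D_2$. Both $\bC_1$ and $\bC_2$ are complete curves with smooth generic fiber (the closures of the given Teichm\"uller curves), hence admissible test families for the intersection products in question; concrete such curves exist because the hyperelliptic loci $\omoduli(2g-2)^\hyp$ and $\omoduli(g-1,g-1)^\hyp$ are nonempty $\SL_2(\RR)$-invariant loci containing square-tiled surfaces for every $g$, so the statement is not vacuous.

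First I would treat $\bC_1\cdot D_1 = 0$. Here $\mu = (2g-2)$, so $\kappa_\mu = \frac{g(g-1)}{3(2g-1)}$, and since the unique zero has $m_1+1 = 2g-1$ we get $(m_1+1)\kappa_\mu = \frac{g(g-1)}{3}$. As $n=1$ there is no discrepancy between $\omega_\rel$ and $\psi_1$, so Proposition~\ref{prop:intomega} gives directly
$$\bC_1\cdot\omega_\rel = \frac{3}{g(g-1)}\Bigl(\bC_1\cdot\lambda - \tfrac{1}{12}\,\bC_1\cdot\delta\Bigr),$$
whence $4g(g-1)\,\bC_1\cdot\omega_\rel = 12\,\bC_1\cdot\lambda - \bC_1\cdot\delta$. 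Substituting into $D_1 = 4g(g-1)\omega_\rel - 12\lambda + \delta$ cancels every term, which is precisely the point of the calibration $4g(g-1) = 12(m_1+1)\kappa_\mu$.

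For $\bC_2\cdot D_2 = 0$ the argument is parallel, with two bookkeeping points. Since the two zeros of order $g-1$ may be interchanged by monodromy, I would first pass to a finite unramified cover on which they define honest sections, exactly as required for Proposition~\ref{prop:intomega}; this multiplies every intersection number by the degree and so does not affect the vanishing. Next, $D_2$ is phrased in $\psi_1,\psi_2$, whereas Proposition~\ref{prop:intomega} computes $\omega_{i,\rel}$; but $\psi_i - \omega_{i,\rel}$ is supported on boundary classes collected in $\delta_\other$, which $\bC_2$ does not meet by Corollary~\ref{cor:zerosatinf}, so $\bC_2\cdot\psi_i = \bC_2\cdot\omega_{i,\rel}$. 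Now $\mu = (g-1,g-1)$ gives $\kappa_\mu = \frac{g^2-1}{6g}$ and $(m_i+1)\kappa_\mu = \frac{g^2-1}{6}$, so Proposition~\ref{prop:intomega} yields $\bC_2\cdot\psi_i = \frac{6}{g^2-1}\bigl(\bC_2\cdot\lambda - \tfrac{1}{12}\bC_2\cdot\delta\bigr)$ for $i=1,2$; summing the two, $(g^2-1)(\bC_2\cdot\psi_1 + \bC_2\cdot\psi_2) = 12\,\bC_2\cdot\lambda - \bC_2\cdot\delta$, and substitution into $D_2$ cancels as before.

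The computations are routine, so the steps I would document most carefully are the only genuine subtleties: the identification $\bC_2\cdot\psi_i = \bC_2\cdot\omega_{i,\rel}$ coming from disjointness of $\bC_2$ from $\delta_\other$, and the passage to the unramified cover separating the two equal-order zeros; neither alters the final numbers but both are needed for Proposition~\ref{prop:intomega} to apply verbatim. I would also remark that hyperellipticity is used only to produce the families $C_1,C_2$: the vanishing in fact holds for any Teichm\"uller curve in $\omoduli(2g-2)$ or $\omoduli(g-1,g-1)$ lifted by its zeros, consistent with the non-varying slope $8 + \frac{4}{g}$ recorded in Corollary~\ref{cor:hypvalues}.
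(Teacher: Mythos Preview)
Your proof is correct and uses the same core tool as the paper, Proposition~\ref{prop:intomega}, but you organize the computation more efficiently. The paper first feeds the explicit slope $s(C_i)=8+4/g$ from Corollary~\ref{cor:hypvalues} into Proposition~\ref{prop:intomega} to obtain the individual ratios $\bC_1\cdot\lambda/\bC_1\cdot\omega_\rel=g^2$ and $\bC_1\cdot\delta/\bC_1\cdot\omega_\rel=4g(2g+1)$ (and analogously for $\bC_2$), and only then substitutes into $D_i$. You instead read off directly from Proposition~\ref{prop:intomega} the linear relation $(m_i+1)\kappa_\mu\cdot(\bC\cdot\omega_{i,\rel}) = \bC\cdot\lambda - \tfrac{1}{12}\bC\cdot\delta$ and observe that the coefficients of $D_1,D_2$ are tailored exactly to this relation, so the slope value is never needed. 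This buys you the stronger conclusion you note at the end: the vanishing holds for \emph{any} Teichm\"uller curve in $\omoduli(2g-2)$ or $\omoduli(g-1,g-1)$, not only in the hyperelliptic components. Your handling of the two bookkeeping points (the unramified base change to separate the two zeros, and $\bC_2\cdot\psi_i=\bC_2\cdot\omega_{i,\rel}$ via disjointness from $\delta_\other$) is also correct and matches how the paper treats these issues elsewhere.
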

\par
\begin{proof}
By the value of $s(C_1)$ given in Corollary~\ref{cor:hypvalues} and Proposition~\ref{prop:intomega}, we obtain that 
$$ \frac{\bC_1\cdot \lambda}{\bC_1\cdot \omega_\rel} = g^2, $$
$$\frac{\bC_1\cdot \delta}{\bC_1\cdot \omega_\rel} = 4g(2g+1). $$
Plugging them into the intersection $\bC_1\cdot D_1$, an elementary calculation shows that $\bC_1\cdot D_1 = 0$. 
\par
Similarly we have 
$$ \frac{\bC_2\cdot \lambda}{\bC_2\cdot (\psi_{1}+\psi_{2})} = \frac{g(g+1)}{4}, $$
$$\frac{\bC_2\cdot \delta}{\bC_2\cdot (\psi_{1}+\psi_{2})} = (g+1)(2g+1). $$
One easily checks that $\bC_2\cdot D_2 = 0$. 
\end{proof}

Since square-tiled surfaces in a stratum correspond to 'lattice points' under the period coordinates, the union of all such \teichm curves $C_1, C_2$ forms a
Zariski dense subset in the hyperelliptic locus. Hence they provide infinitely many solutions to the above question. We finally remark that the positivity of $D_1$ as well as not being strictly ample has a transparent geometric explanation, pointed out to us by one of the referees. It is proportional to the pullback of the theta line bundle from the universal Jacobian over $\barmoduli[g,1]$ via the map $(C, p) \mapsto [\cO((2g-2)p) \otimes \omega_C^{*}]$ by Morita \cite[Theorem 1.6]{morita}.

\bibliography{my}
%

\end{document}